\numberwithin{equation}{section}
\numberwithin{figure}{section}
\theoremstyle{plain}
\newtheorem{thm}{\protect\theoremname}
\theoremstyle{plain}
\newtheorem{lem}[thm]{\protect\lemmaname}
\theoremstyle{definition}
\newtheorem{defn}[thm]{\protect\definitionname}
\theoremstyle{definition}
\newtheorem{example}[thm]{\protect\examplename}
\theoremstyle{plain}
\newtheorem{prop}[thm]{\protect\propositionname}
\theoremstyle{plain}
\newtheorem{cor}[thm]{\protect\corollaryname}
\theoremstyle{remark}
\newtheorem{notation}[thm]{\protect\notationname}
\theoremstyle{remark}
\newtheorem{rem}[thm]{\protect\remarkname}
\providecommand{\corollaryname}{Corollary}
\providecommand{\definitionname}{Definition}
\providecommand{\examplename}{Example}
\providecommand{\lemmaname}{Lemma}
\providecommand{\notationname}{Notation}
\providecommand{\propositionname}{Proposition}
\providecommand{\remarkname}{Remark}
\providecommand{\theoremname}{Theorem}
\begin{document}
\author{Adrien Dubouloz} 
\address{IMB UMR5584, CNRS, Univ. Bourgogne Franche-Comt\'e, F-21000 Dijon, France.} 
\email{adrien.dubouloz@u-bourgogne.fr}

\author{Takashi Kishimoto} 
\address{Department of Mathematics, Faculty of Science, Saitama University, Saitama 338-8570, Japan} 
\email{tkishimo@rimath.saitama-u.ac.jp}

\author{Pedro Montero} 
\address{Departamento de Matemática Universidad Técnica Federico Santa María \newline \indent Avenida España 1680,  Valparaíso, Chile}
\email{pedro.montero@usm.cl}  

\subjclass[2010]{14J45, 14J50, 14L30, 14M15, 14M20} 

\title[Del Pezzo quintics as equivariant compactifications of vector groups]{Del Pezzo quintics as equivariant \\ compactifications of vector groups} 
\begin{abstract}
We study faithful actions with a dense orbit of abelian unipotent
groups on quintic del Pezzo varieties over a field of characteristic
zero. Such varieties are forms of linear sections of the Grassmannian
of planes in a 5-dimensional vector space. We characterize which smooth
forms admit these types of actions and show that in case of existence,
the action is unique up to equivalence by automorphisms. We also give
a similar classification for mildly singular quintic del Pezzo threefolds
and surfaces. 
\end{abstract}

\maketitle
\vspace{-0.8cm}

\section*{Introduction}

Vector group varieties are defined by analogy to toric varieties as
being varieties $X$ endowed with an effective action of an abelian
unipotent group $\mathbb{U}\cong\mathbb{G}_{a}^{n}$ with a Zariski
dense open orbit. For varieties defined over a field of characteristic
zero, the group $\mathbb{U}$ then embeds equivariantly as the open
orbit, making $X$ into a partial equivariant completion of $\mathbb{U}$.
The study of such equivariant completions which are Fano varieties
was initiated by Hassett and Tschinkel in \cite{HT99} with views
towards Manin's conjecture on the asymptotic distribution of rational
points of bounded height over number fields (see e.g. \cite{CLT02,CLT12}).
Over algebraically closed fields of characteristic zero, besides projective
spaces of every dimension, many families of Fano varieties and other
Mori fibers spaces including for instance smooth projective quadrics,
Grassmannians and flag varieties are known to be vector group varieties,
see e.g. \cite{Arz11, AS11,HF20, FM19, HM20, Nag20,Sha09}. 

Of particular interest in this context is the question of classification
of possible equivalence classes of structures of vector group variety
up to isomorphisms on a given variety. Indeed, it was observed by
Hassett and Tschinkel \cite{HT99} that in contrast to toric structures,
vector group variety structures on $\mathbb{P}_{\mathbb{C}}^{n}$,
$n\geq2$, are not unique and that for $n\geq6$ there are even infinitely
many equivalence classes of such structures. In contrast, it is known
that over algebraically closed fields of characteristic zero, Grassmannians
other than projective spaces \cite{AS11} and smooth quadrics \cite{Sha09}
admit a unique equivalence class of structure of vector group variety.
For Fano varieties of Picard rank one over the field of complex numbers,
Fu and Hwang \cite{HF14} gave a uniform characterization of uniqueness
of vector group variety structures in terms in the smoothness of the
Variety of Minimal Rational Tangents (VMRT) of a dominating family
of minimal rational curves. 

In this article, we consider the problem of existence and uniqueness
of vector group variety structures on smooth and mildly singular quintic
del Pezzo varieties over an arbitrary field $k$ of characteristic
zero. By definition, a \emph{smooth quintic del Pezzo variety} is
a smooth geometrically connected projective $k$-variety $X$ whose
base extension $X_{\bar{k}}=X\times_{\mathrm{Spec}(k)}\mathrm{Spec}(\bar{k})$
to an algebraic closure $\bar{k}$ of $k$ has an ample invertible
sheaf $\mathcal{L}$ of degree $5$ such that $\omega_{X_{\bar{k}}}^{\vee}\cong\mathcal{L}^{\otimes(n-1)}$.
If $n=2$ then $X_{\bar{k}}$ is a del Pezzo surface of degree $5$.
On the other hand, by \cite{Fuj81}, a smooth quintic del Pezzo $k$-variety
$X$ exists if and only if $n\leq6$ and furthermore, for each $n=3,4,5,6$,
$X_{\bar{k}}$ is unique up to isomorphism, isomorphic to any smooth
linear section of the Grassmannian $\mathrm{G}(2,5)\subset\mathbb{P}_{\bar{k}}^{9}$
of $2$-dimensional vector subspaces of $\bar{k}^{\oplus5}$. For
$n\leq3$, the automorphism group of $X_{\bar{k}}$ is either finite
if $n=2$ or isomorphic to $\mathrm{PGL}_{2}(\bar{k})$ if $n=3$
(see e.g. \cite[Proposition 7.1.10]{CS1}). In particular, it cannot
contain any abelian unipotent subgroup defining a vector group variety
structure on $X$. Over the field of complex numbers, existence and
uniqueness of vector group variety structures on the remaining varieties,
which are all Fano of Picard rank $1$, has been settled affirmatively
by Fu and Hwang as a consequence of a series of articles \cite{HF18,HF20}
devoted to the broader study using VMRT techniques of so-called Euler-symmetric
projective varieties. 

Here, since we work over arbitrary fields $k$ of characteristic zero,
possibly non-closed, the actual question becomes to determine and
classify $k$-forms of vector group variety structures on del Pezzo
quintics. Of course, the non-existence of such structures after base
extension to an algebraic closure is a clear obstruction for existence
of these structures over the given base field, But on the other hand,
neither the existence nor its combination with uniqueness up to equivalence
after base extension is enough in general to conclude, say by Galois
descent arguments, the existence of such structures defined over the
base field. For instance, a smooth $n$-dimensional quadric in $\mathbb{P}_{\mathbb{Q}}^{n+1}$
without $\mathbb{Q}$-rational point does not admit any vector group
variety structure defined over $\mathbb{Q}$, even though its base
extension to $\bar{\mathbb{Q}}$ admits infinitely many such structures,
which, as a consequence of \cite{Sha09}, are all equivalent. Our
approach is thus by necessity different from that using VMRT techniques
in \cite{HF18,HF20,FM19}, which, in particular, depend on the Cartan-Fubini
analytic extension theorem \cite{HM01} that has no arguably straightforward
counterpart over arbitrary fields. Instead we build on elementary
birational geometry of the Grassmannian $\mathrm{G}(2,5)$, its linear
Schubert sub-varieties and their associated rational projections,
a material which, over algebraically closed fields, goes back to a
classical article of Todd \cite{Todd30}. Our first main result is
a classification of $k$-forms of vector group variety structures
on smooth del Pezzo quintics of dimension $\geq4$ which can be summarized
as follows: 
\begin{thm}
\label{thm:MainTh1}For a smooth quintic del Pezzo $k$-variety $X_{n}$
of dimension $n\in\{4,5,6\}$, the following hold: 

1) If $n=6$ then $X_{6}$ admits a vector group variety structure
if and only if it has a $k$-rational point. If so, $X_{6}$ is isomorphic
to $\mathrm{G}(2,5)\subset\mathbb{P}_{k}^{9}$ and it admits a unique
vector group variety structure up to equivalence.

2) If $n=4,5$ then $X_{n}$ is unique up to isomorphism, isomorphic
to any smooth section of $\mathrm{G}(2,5)\subset\mathbb{P}_{k}^{9}$
by a linear subspace of codimension $6-n$. Furthermore, it admits
precisely one vector group variety structure up to equivalence.
\end{thm}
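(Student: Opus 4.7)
The theorem has three substantive claims per case --- uniqueness of the $k$-isomorphism class (made conditional on a rational point when $n=6$), existence of a vector group variety structure on the resulting standard $k$-model, and uniqueness of such a structure up to equivalence. I would organize the proof around these three points, reducing each to a statement about the standard model $G(2,5)_{k}$ or its linear sections.

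\textbf{Case $n=6$.} One direction in (1) is immediate: a VGV structure provides an open orbit $k$-isomorphic to $\mathbb{G}_{a,k}^{6}$, whose identity element is a $k$-rational point. For the converse, I would show that any $k$-form $X_{6}$ of $G(2,5)_{\bar{k}}$ with a $k$-rational point is $k$-isomorphic to $G(2,5)_{k}$: the presence of a $k$-rational point trivializes the obstruction to descending the generator of $\mathrm{Pic}(X_{6,\bar{k}}) \cong \mathbb{Z}$, so the Pl\"ucker line bundle descends to $X_{6}$ and embeds it in $\mathbb{P}^{9}_{k}$, and Galois descent of the Pl\"ucker relations then identifies $X_{6}$ with $G(2,5)_{k}$. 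The canonical VGV structure on $G(2,5)_{k}$ is produced from the unipotent radical $\mathrm{R_u}(P) \cong \mathbb{G}_{a,k}^{6}$ of the maximal parabolic $P \subset \mathrm{PGL}_{5,k}$ stabilizing a $k$-rational point: this is abelian (it can be identified with a $2 \times 3$ matrix block) and acts on $G(2,5)_{k}$ with the big Schubert cell as dense open orbit. For uniqueness up to equivalence, the algebraically closed case is the main theorem of \cite{AS11}; to descend it I would use that any $\mathbb{G}_{a}^{n}$-action on a projective variety has a unique fixed point. For a $k$-defined $\mathbb{U} \subset \mathrm{Aut}_{k}(G(2,5)_{k})$ this fixed point $p$ is automatically $k$-rational and recovers $\mathbb{U}$ as the unipotent radical of the maximal parabolic at $p$; two such structures are then conjugated by any $g \in \mathrm{PGL}_{5}(k)$ transporting one fixed point to the other, which exists by transitivity of $\mathrm{PGL}_{5,k}$ on $G(2,5)(k)$.

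\textbf{Case $n=4,5$.} The stronger claim here --- uniqueness of $X_{n}$ up to $k$-isomorphism \emph{without} a rational-point hypothesis --- should follow by exhibiting unconditionally on every smooth $k$-form $X_{n}$ a $k$-rational Schubert-type subvariety (a line, conic, or plane depending on $n$), using the birational geometry of linear sections of $G(2,5)$ developed in the paper following Todd \cite{Todd30}. Such a subvariety produces a $k$-rational point on $X_{n}$, trivializes the obstruction to descending the Pl\"ucker line bundle, and identifies $X_{n}$ with the standard linear section of $G(2,5)_{k}$. A VGV structure is then constructed explicitly on the standard model by a rational projection from a suitably chosen $k$-rational Schubert subvariety, yielding an abelian unipotent action with dense open orbit. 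Uniqueness up to equivalence again proceeds by the fixed-point descent: each $k$-defined VGV structure has a unique $k$-rational fixed point and is recovered from it as the unipotent radical of the corresponding stabilizer, so two VGV structures are conjugated by any element of $\mathrm{Aut}_{k}(X_{n})$ transporting one fixed point to the other.

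\textbf{Main obstacle.} The most delicate step is the case $n=4,5$: producing unconditionally a $k$-rational subvariety on an arbitrary smooth $k$-form of $X_{n}$, and controlling $\mathrm{Aut}_{k}(X_{n})$ well enough to conclude transitivity on the admissible fixed points of VGV structures. Whereas for $n=6$ the ambient automorphism group $\mathrm{PGL}_{5,k}$ is large and its action on $G(2,5)(k)$ is transparently transitive, for $n=4,5$ the automorphism group of $X_{n}$ is a proper subgroup of $\mathrm{PGL}_{5,k}$ (the stabilizer of the linear section), and both the existence of rational points and the transitivity statement are expected to require the explicit birational analysis of Schubert sub-varieties of $G(2,5)$ carried out in the body of the paper.
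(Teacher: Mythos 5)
Your uniqueness mechanism rests on a false statement, and the standard structure you construct for $n=6$ is not in fact a vector group structure. A $\mathbb{G}_{a}^{n}$-action on a projective variety does not have a unique fixed point in general: the toric $\mathbb{G}_{a,k}^{2}$-structure on $\mathbb{P}_{k}^{2}$ fixes the line at infinity pointwise, and the (unique) vector group structure on $\mathrm{G}(2,5)$ fixes an entire $\sigma_{2,2}$-plane $\Xi$ pointwise (the paper states that the lifted action restricts trivially to $\Xi$, and this is visible from the representation (\ref{eq:Ga6-Rep-Grass}) of Example \ref{exa:Explicit-Toric-Structures}). Moreover, the unipotent radical of the parabolic stabilizing a $k$-point of $\mathrm{G}(2,5)$, i.e. a $2$-dimensional subspace $W_{2}\subset V$, is $\mathrm{Hom}(V/W_{2},W_{2})\cong\mathbb{G}_{a,k}^{6}$, which is abelian but has no dense orbit: the stabilizer of a point $E$ with $E\cap W_{2}=0$ is $\{u:\,u|_{\bar{E}}=0\}$, of dimension $2$, so all orbits have dimension at most $4$. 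The group acting simply transitively on the big cell is the unipotent radical $\mathrm{Hom}(V/W_{3},W_{3})$ of the stabilizer of a $3$-dimensional subspace $W_{3}$; it is attached to a point of the dual Grassmannian, equivalently to the $\sigma_{2,2}$-plane it fixes, not to a point of $\mathrm{G}(2,5)$. Hence every step of your chain (unique fixed point, recovery of $\mathbb{U}$ as the unipotent radical of the parabolic at that point, conjugation by transitivity of $\mathrm{PGL}_{5}(k)$ on $\mathrm{G}(2,5)(k)$) fails; and even after replacing the fixed point by the fixed plane, the claim that an arbitrary structure is recovered as the unipotent radical attached to its fixed locus is essentially the uniqueness statement itself: over $\bar{k}$ it is \cite{AS11} for $n=6$, but for $n=4,5$ you cite no algebraically closed input at all (the Fu--Hwang results are over $\mathbb{C}$, and the paper deliberately reproves uniqueness without them). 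The paper's actual argument is different: it picks a $\mathbb{U}$-invariant line $V_{1}\subset V$ (Lemma \ref{lem:Unipotent-basic-prop}), shows the projection from the solid $\sigma_{3,0}(V_{1})$ is equivariant, descends the action through it by Proposition \ref{prop:adrien}, invokes Sharoyko's uniqueness \cite{Sha09} on the resulting quadric fourfold (resp. threefold), and pins down the lift by an explicit matrix computation (Propositions \ref{prop:Uniqueness-Grass} and \ref{prop:Uniqueness-Z5}); for $n=4$ it projects from the unique $\sigma_{2,2}$-plane and uses the classification \cite{HT99} on $\mathbb{P}_{k}^{4}$ (Proposition \ref{prop:Uniqueness-W5}).

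The remaining parts are also incomplete. For $n=6$, a rational point does kill the Brauer obstruction and descends the Pl\"ucker bundle, giving an embedding $X_{6}\hookrightarrow\mathbb{P}_{k}^{9}$; but a $k$-form of $\mathrm{G}(2,5)$ inside a split $\mathbb{P}^{9}$ is not thereby identified with the split Grassmannian -- descent of the Pl\"ucker relations only produces a twisted form of those quadrics, and the splitting needs a further argument (the paper shows the associated form of $\mathbb{P}_{k}^{4}$ contains a $k$-curve which is geometrically a line and has odd canonical degree, hence splits; a Brauer-class argument would also do). For $n=4,5$, the two points you yourself flag as delicate -- unconditional existence of a $k$-rational Schubert-type subvariety on an arbitrary form, and enough control of $\mathrm{Aut}_{k}(X_{n})$ -- are precisely the content of part (2) and are deferred rather than proved. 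The paper gets the required rationality for free from uniqueness of the relevant Schubert subvariety (Lemmas \ref{lem:Solids-and-planes-Z5} and \ref{lem:Unique-plane-W5}: the unique solid for $n=5$ and the unique $\sigma_{2,2}$-plane for $n=4$ are Galois-stable, hence defined over $k$), descends the corresponding projection, proves the target data are split, and reads the Sarkisov link backwards (Propositions \ref{prop:Z5-trivial-forms} and \ref{prop:Uniqueness-W5}), with existence of a structure supplied by Proposition \ref{prop:Existence-Gan-smooth-section}. As written, your proposal therefore does not establish the theorem in any of the three cases.
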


In a second step, we apply the same methods to $k$-forms of vector
group variety structures on mildly singular del Pezzo quintic threefolds
and surfaces. The case of quintic del Pezzo surfaces with canonical
singularities has already been fully settled by Derenthal and Loughran
\cite{DL10} who studied vector group variety structures on del Pezzo
surfaces with canonical singularities over arbitrary fields of characteristic
zero. We therefore mainly focus on the case of quintic del Pezzo threefolds
with terminal singularities. We obtain the following characterization,
which says in particular that in contrast to smooth del Pezzo quintics
of dimension $4$ and $5$ and canonical del Pezzo quintic surfaces
which have no non-trivial $k$-forms for any field $k$, trinodal
del Pezzo quintics threefolds do in general have non-trivial $k$-forms:
\begin{thm}
\label{thm:MainTh2}A quintic del Pezzo threefold $X_{3}$ with terminal
singularities admits a vector group variety structure if and only
if its base extension to $\bar{k}$ has precisely three ordinary double
points. In this case, the vector group structure is unique up to isomorphism. 

Furthermore, isomorphism classes of such threefolds are in one-to-one
correspondence with $\mathrm{PGL}_{2}(k)$-orbits of smooth $0$-dimensional
sub-schemes of $\mathbb{P}_{k}^{1}$ of length three. 
\end{thm}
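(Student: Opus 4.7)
The plan is to argue in three stages: first, reduce over $\bar{k}$ to the trinodal case; second, construct a vector group structure on the trinodal model and establish its uniqueness up to equivalence; third, classify $k$-forms by Galois descent in terms of $\mathrm{PGL}_{2}(k)$-orbits of length-$3$ subschemes of $\mathbb{P}^{1}_{k}$.

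For the first stage, the classification of terminal quintic del Pezzo threefolds over $\bar{k}$ leaves only the smooth, one-nodal, two-nodal and trinodal models to consider. The smooth case is already excluded in the introduction since $\mathrm{Aut}(X_{3,\bar{k}})\cong\mathrm{PGL}_{2}$ admits only $1$-dimensional unipotent subgroups. To dispense with the one- and two-nodal cases, I would analyze the identity component of the automorphism group via the projection from a node (respectively, from the secant line through two nodes); each such projection yields an explicit birational model on which the automorphism action can be described directly and shown not to contain a commutative $3$-dimensional unipotent subgroup with an open orbit. The underlying reason is expected to be that the connected automorphism group is of too small rank, or sits inside a proper parabolic that cannot accommodate such a unipotent subgroup.

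For the trinodal case, the construction of a vector group structure should flow from the birational model of $\mathrm{G}(2,5)$ and its linear Schubert subvarieties developed in the preceding sections of the paper, in the spirit of Todd. Concretely, the trinodal quintic del Pezzo threefold $Y_{\bar{k}}$ should sit as a specific degenerate linear section (or linear projection) in the $\mathrm{G}(2,5)$-framework, and from this description one extracts a solvable subgroup of $\mathrm{Aut}(Y_{\bar{k}})$ of the form $\mathbb{T}\ltimes\mathbb{G}_{a}^{3}$ whose unipotent radical acts with an open orbit. Uniqueness up to equivalence is then a Lie-theoretic consequence: any $3$-dimensional abelian unipotent subgroup acting with a dense orbit is a maximal unipotent subgroup of the identity component of $\mathrm{Aut}(Y_{\bar{k}})$, and all such are conjugate.

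The classification of $k$-forms is the most delicate stage and the one I expect to be the main obstacle. The key structural fact is that the identity component of $\mathrm{Aut}(Y_{\bar{k}})$ should contain a canonical $\mathrm{PGL}_{2}$-factor acting on a canonical equivariant $\mathbb{P}^{1}$ --- obtainable for example as a categorical quotient by the unipotent radical, or inherited via degeneration from the $\mathrm{SL}_{2}$-module structure on the smooth quintic --- on which the three nodes embed as a length-$3$ subscheme. Producing this $\mathbb{P}^{1}$ together with its length-$3$ subscheme in a $k$-defined, $\mathrm{PGL}_{2}$-equivariant way is the technical heart of the argument, since the bare Galois-invariance of the set of nodes is not enough. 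Once this is achieved, the bijection follows: a $k$-form $X_{3}$ determines a length-$3$ subscheme of $\mathbb{P}^{1}_{k}$ up to $\mathrm{PGL}_{2}(k)$, and conversely any such subscheme can be used to twist the universal trinodal model so as to recover a $k$-form, the two constructions being mutually inverse on isomorphism classes.
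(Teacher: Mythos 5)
The decisive gap is in your third stage. There is no $\mathrm{PGL}_{2}$ acting on the trinodal threefold: reading off the contraction to $\mathbb{P}^{3}$ (blow up three non-collinear points lying in a plane $H$, contract the lines through pairs of them), the identity component $\mathrm{Aut}^{0}(Y_{\bar{k}})$ is the group of automorphisms of $\mathbb{P}^{3}_{\bar{k}}$ preserving $H$ and fixing each of the three points, i.e.\ a solvable group isomorphic to $\mathbb{G}_{a}^{3}\rtimes\mathbb{G}_{m}^{2}$, and the full automorphism group only adds a finite permutation of the nodes. Hence there is no ``canonical $\mathrm{PGL}_{2}$-factor'', the quotient by the unipotent radical is not a $\mathbb{P}^{1}$ (that radical already has a dense orbit), and no $\mathrm{SL}_{2}$-structure is inherited by degeneration from the smooth quintic, whose $\mathrm{PGL}_{2}$ does not act on the special fiber; consequently the ``twisting'' you propose for the converse direction has no group to twist by. The $\mathrm{PGL}_{2}(k)$ in the statement is extrinsic, not an automorphism group of $X_{3}$: the paper (Proposition \ref{prop:trinodal-link-sigma_2,2}) first descends the birational contraction over $k$, by showing that the plane $\Xi$ containing the whole singular locus and the relevant divisor class are defined over $k$, so that every $k$-form is obtained from a pair $(H_{X_{3}},S_{X_{3}})$ in $\mathbb{P}^{3}_{k}$ with $S_{X_{3}}$ a non-collinear length-$3$ subscheme of the plane $H_{X_{3}}$; isomorphism classes then correspond to $\mathrm{PGL}_{4}(k)$-orbits of such pairs, and one passes to a smooth $k$-rational conic $C\subset H_{X_{3}}$ through $S_{X_{3}}$, using transitivity of $\mathrm{PGL}_{4}(k)$ on (plane, conic) pairs and $\mathrm{Aut}_{k}(H,C)\cong\mathrm{Aut}_{k}(C)\cong\mathrm{PGL}_{2}$, to land on $\mathrm{PGL}_{2}(k)$-orbits in $\mathbb{P}^{1}_{k}$. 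This $k$-descent of the link is exactly the point you flag as the technical heart, and the mechanisms you propose for producing the $\mathbb{P}^{1}$ would not supply it.

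The first two stages are in the right spirit but are not yet arguments. For the one- and two-nodal exclusion the paper does not project from nodes; it passes to a small $\mathbb{Q}$-factorialization, which is $\mathbb{P}_{\mathbb{P}^{2}_{k}}(\mathcal{E})$ with $\mathcal{E}$ stable (one node), respectively a point blow-up of $\mathbb{P}_{\mathbb{P}^{2}_{k}}(\mathcal{T}_{\mathbb{P}^{2}_{k}}(-1))$ (two nodes), and applies Proposition \ref{prop:adrien} together with Corollary \ref{cor:Simple-no-struct}: a projectivized simple bundle admits no vector group structure. Your heuristic can be made precise along these lines (any $\mathbb{G}_{a}^{3}$ would inject into $\mathrm{PGL}_{3}$ or a Borel of it, which has no $3$-dimensional abelian unipotent subgroup), but as written it is only an expectation. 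For uniqueness in the trinodal case, the principle ``an abelian unipotent subgroup with dense orbit is a maximal unipotent subgroup, and all such are conjugate'' is false in general: $\mathbb{P}^{2}$ carries two inequivalent $\mathbb{G}_{a}^{2}$-structures although all maximal unipotent subgroups of $\mathrm{PGL}_{3}$ are conjugate, and the theorem requires uniqueness over $k$, not $\bar{k}$. The argument does go through here once one knows $\mathrm{Aut}^{0}\cong\mathbb{G}_{a}^{3}\rtimes\mathbb{G}_{m}^{2}$, since then every unipotent subgroup lies in the abelian unipotent radical and a $3$-dimensional one equals it; but you have not established this structure. The paper avoids the computation by transporting any vector group structure through the equivariant Sarkisov link to $\mathbb{P}^{3}_{k}$ (Proposition \ref{prop:adrien} again) and invoking the Hassett--Tschinkel classification of $\mathbb{G}_{a}^{3}$-structures on $\mathbb{P}^{3}_{k}$ stabilizing the non-linear length-$3$ subscheme $S_{X_{3}}$, as in Corollary \ref{cor:Iso-class-Trinodal-forms}.
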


The article is organized as follows. In the first section we collect
standard facts on Grassmannians and basic properties of vector groups
and their actions on varieties. Section two is devoted to a review
of certain classes of linear Schubert sub-varieties of the Grassmannian
$\mathrm{G}(2,5)$ and its smooth linear sections, and of their associated
rational linear projections. These preliminary results are then applied
in the third and fourth sections to derive the proofs of Theorem \ref{thm:MainTh1}
and Theorem \ref{thm:MainTh2}, respectively. \\

\noindent \textbf{Acknowledgements.}  This research was initiated
during the stay of the first and the third authors at Saitama University
in March 2020 at the occasion of the last in-person workshop ``Affine
and Birational Geometry'' held before the Covid-19 pandemic. 

The first author was partially supported by the French ANR project
\textquotedbl FIBALGA\textquotedbl{} ANR-18-CE40-0003 and acknowledge
the support of the EIPHI Graduate School ANR-17-EURE-0002 to the Institute
of Mathematics of Burgundy. The second author was partially funded
by JSPS KAKENHI Grant Number 19K03395. The third author was partially
supported by Fondecyt ANID projects 11190323 and 1200502.

\section{Preliminaries}

We work over a field $k$ of characteristic zero, with a fixed algebraic
closure $\bar{k}$ and associated Galois group $\Gamma=\mathrm{Gal}(\bar{k}/k)$.
We consider $\Gamma$ as the profinite group ${\displaystyle \lim_{\leftarrow}\mathrm{Gal}(k'/k)}$
endowed with the profinite topology, the limit being taken over the
directed set of finite Galois extension $k\subset k'$ of $k$, each
group $\mathrm{Gal}(k'/k)$ being endowed with the discrete topology.
A $k$-variety is an integral $k$-scheme of finite type. 

\subsection{Notation and conventions}

For vector bundles and projective bundles, we follow \cite{EGAII}.
Namely, given a quasi-coherent sheaf $\mathcal{F}$ on a scheme $X$,
we let $\mathrm{Sym}\mathcal{F}$ be the symmetric algebra of $\mathcal{F}$
and we denote by $p:\mathbb{V}_{X}(\mathcal{F})=\mathrm{Spec}_{X}(\mathrm{Sym}^{\cdot}\mathcal{F})\to X$
the ``vector bundle'' over $X$ associated to $\mathcal{F}$ and
by 
\[
\pi:\mathbb{P}_{X}(\mathcal{F})=\mathrm{Proj}_{X}(\mathrm{Sym}\mathcal{F})\cong(\mathbb{V}_{X}(\mathcal{F})\setminus0_{X})/\mathbb{G}_{m,X}\to X,
\]
where $0_{X}$ is the zero section of $p$, its associated ``projective
bundle''. We denote by $\mathcal{O}_{\mathbb{P}_{X}(\mathcal{F})}(1)$
the canonical coherent invertible sheaf of $\mathcal{O}_{\mathbb{P}_{X}(\mathcal{F})}$-modules
associated to $\mathrm{Sym}\mathcal{F}$ viewed as graded sheaf of
modules over itself with the degree shifted by $1$. When $\mathcal{E}$
is a coherent locally free sheaf, $\mathbb{V}_{X}(\mathcal{E})$ is
a usual Zariski locally trivial vector bundle of finite rank and $\mathbb{P}_{X}(\mathcal{E})$
is its associated projective bundle of lines. 

Given a quasi-coherent sheaf $\mathcal{F}$ on a scheme $X$ and an
integer $d\geq0$, we denote by $\rho:\mathbb{G}_{X}(\mathcal{F},d)\to X$
the Grassmann bundle whose $T$-points, where $f:T\to X$ is any $X$-scheme,
are equivalence classes of coherent locally free quotients $\mathcal{E}$
of $f^{*}\mathcal{F}$ of constant rank $d$, two such quotients being
called equivalent if the corresponding surjections $q:f^{*}\mathcal{F}\to\mathcal{E}$
and $q':f^{*}\mathcal{F}\to\mathcal{E}'$ have the same kernel, see
e.g. \cite{Klei69}. We denote by $\rho^{*}\mathcal{F}\to\mathcal{Q}$
the universal coherent locally free quotient of constant rank $d$
and refer the kernel $\mathcal{S}$ of this surjection to as the universal
subsheaf of $\rho^{*}\mathcal{F}$ so that we have the universal exact
sequence $0\to\mathcal{S}\to\rho^{*}\mathcal{F}\to\mathcal{Q}\to0$.
For a coherent locally free sheaf $\mathcal{F}$ of constant rank
$n\geq d+1$, there is a canonical isomorphism $\mathbb{G}_{X}(\mathcal{F},d)\to\mathbb{G}_{X}(\mathcal{F}^{\vee},n-d)$
given on $T$-points by mapping a quotient $q:f^{*}\mathcal{F}\to\mathcal{E}$
with kernel $\mathcal{K}$ to the quotient $f^{*}\mathcal{F}^{\vee}\to\mathcal{K}^{\vee}$.
For $d=1$, $\mathbb{G}_{X}(\mathcal{F},d)\cong\mathbb{P}_{X}(\mathcal{F})$
and the universal exact sequence coincides under this isomorphism
with the relative Euler exact sequence of $\mathbb{P}_{X}(\mathcal{F})$
\[
0\to\Omega_{\mathbb{P}_{X}(\mathcal{F})}^{1}(1):=\Omega_{\mathbb{P}_{X}(\mathcal{F})}^{1}\otimes\mathcal{O}_{\mathbb{P}_{X}(\mathcal{F})}(1)\to\pi^{*}\mathcal{F}\to\mathcal{O}_{\mathbb{P}_{X}(\mathcal{F})}(1)\to0.
\]

\subsection{Grassmannians\label{subsec:Grass-Prelim}}

We summarize basic properties of Grassmannian varieties, see e.g.
\cite{GV15} and \cite[Chapters 3 and 4]{Wey03} for the details.
For a $k$-vector space $V$ of dimension $n\geq2$ and an integer
$1\leq d\leq n-1$, the Grassmann bundle $\rho:\mathbb{G}_{k}(V^{\vee},d)\to\mathrm{Spec}(k)$
is the $d(n-d)$-dimensional Grassmannian whose geometric points correspond
to equivalence classes of $d$-dimensional quotients of $V_{\bar{k}}^{\vee}$,
equivalently to $d$-dimensional $\bar{k}$-vector subspaces $E$
of $V_{\bar{k}}$. 

\subsubsection{Tautological sheaves }

We put $V_{\mathbb{G}_{k}(V^{\vee},d)}^{\vee}=V^{\vee}\otimes_{k}\mathcal{O}_{\mathbb{G}_{k}(V^{\vee},d)}$
and we write 
\[
0\to\mathcal{S}=\mathcal{S}_{\mathbb{G}_{k}(V^{\vee},d)}\to V_{\mathbb{G}_{k}(V^{\vee},d)}^{\vee}\to\mathcal{Q}=\mathcal{Q}_{\mathbb{G}_{k}(V^{\vee},d)}\to0
\]
for the universal sequence of coherent locally free sheaves on $\mathbb{G}_{k}(V^{\vee},d)$.
The sheaf of Kähler differential $\Omega_{\mathbb{G}_{k}(V^{\vee},d)/k}^{1}$
is canonically isomorphic to $\mathcal{H}\mathrm{om}(\mathcal{Q},\mathcal{S})\cong\mathcal{S}\otimes\mathcal{Q}^{\vee}$
and its determinant $\omega_{\mathbb{G}_{k}(V^{\vee},d)}$ is canonically
isomorphic to $(\det\mathcal{S})^{\otimes n-d}\otimes(\det\mathcal{Q}^{\vee})^{\otimes d}\cong(\det\mathcal{Q}^{\vee})^{\otimes n}$.
The $k$-vector spaces $H^{0}(\mathbb{G}_{k}(V^{\vee},d),\mathcal{Q})$
and $H^{0}(\mathbb{G}_{k}(V^{\vee},d),\mathcal{S}^{\vee})$ are canonically
isomorphic to $V^{\vee}$ and $V$ respectively, $H^{1}(\mathbb{G}_{k}(V^{\vee},d),\Omega_{\mathbb{G}_{k}(V^{\vee},d)/k}^{1})\cong k$
and all other cohomology spaces of $\mathcal{Q}$, $\mathcal{Q}^{\vee}$,\emph{
$\mathcal{S}$, $\mathcal{S}^{\vee}$ }and $\Omega_{\mathbb{G}_{k}(V^{\vee},d)/k}^{1}$
are zero.

\subsubsection{Pl\"ucker embedding and automorphisms\label{subsec:Plucker-Aut-Iso}}

We denote by $j_{P}:\mathbb{G}_{k}(V^{\vee},d)\to\mathbb{P}_{k}(\Lambda^{d}V^{\vee})$
the \emph{Pl\"ucker embedding}, that is,\emph{ }the closed immersion
determined by the surjection $\Lambda^{d}V_{\mathbb{G}_{k}(V^{\vee},d)}^{\vee}\to\det\mathcal{Q}$
induced by the universal quotient homomorphism. Letting $\mathrm{Aut}_{k}(\mathbb{P}_{k}(\Lambda^{d}V^{\vee}),\mathbb{G}_{k}(V^{\vee},d))$
be the stabilizer of $j_{P}(\mathbb{G}_{k}(V^{\vee},d))$ in $\mathrm{Aut}_{k}(\mathbb{P}_{k}(\Lambda^{d}V^{\vee}))$,
it follows from \cite{Chow49} that the composition of the homomorphism
of $k$-group schemes 
\begin{equation}
\mathrm{PGL}_{k}(V^{\vee})=\mathrm{Aut}_{k}(\mathbb{P}_{k}(V^{\vee}))\to\mathrm{Aut}{}_{k}(\mathbb{P}_{k}(\Lambda^{d}V^{\vee}),\mathbb{G}_{k}(V^{\vee},d)),\,\varphi\mapsto\Lambda^{d}\varphi\label{eq:Aut-Plucker}
\end{equation}
with the restriction homomorphism $\mathrm{Aut}_{k}(\mathbb{P}_{k}(\Lambda^{d}V^{\vee}),\mathbb{G}_{k}(V^{\vee},d))\to\mathrm{Aut}_{k}(\mathbb{G}_{k}(V^{\vee},d))$
is a closed immersion, which is an isomorphism when $n\neq2d$ (otherwise,
its image is a $k$-subgroup scheme of index $2$). 

\subsection{Vector groups and vector group varieties\vspace{-0.5em}}

\subsubsection{Vector groups }

A \emph{vector $k$-group} is an abelian unipotent algebraic $k$-group
scheme. By \cite[IV.2.4]{DeGa70}, there is an equivalence between
the category of finite dimensional $k$-vector spaces and the category
of vector $k$-groups, given by the map associating to a finite dimensional
$k$-vector space $V$ the $k$-group scheme $(\mathbb{V}_{k}(V^{\vee}),+)$,
where the comorphism of the $k$-group scheme structure $+$ is induced
by the diagonal homomorphism $V^{\vee}\to V^{\vee}\oplus V^{\vee}$,
and to a $k$-linear homomorphism $f:W\to V$ the $k$-group homomorphism
$(\mathbb{V}_{k}(W^{\vee}),+)\to(\mathbb{V}_{k}(V^{\vee}),+)$ induced
by $^{t}f:V^{\vee}\to W^{\vee}$. The choice of a $k$-basis of $V$
determines an isomorphism of $k$-groups schemes $(\mathbb{V}_{k}(V^{\vee}),+)\cong\mathbb{G}_{a,k}^{n}$.
We will repeatedly use the following elementary results.
\begin{lem}
\label{lem:Unipotent-basic-prop}With the notation above, the following
hold:

\begin{enumerate}

\item Every $k$-subgroup and quotient $k$-group of a vector $k$-group
is a vector $k$-group.

\item Every extension $0\to\mathbb{U}'\to\mathbb{U}\to\mathbb{U}''\to0$
of vector $k$-groups has a splitting $h:\mathbb{U}\stackrel{\cong}{\to}\mathbb{U}'\times\mathbb{U}''$. 

\item The set $M^{\mathbb{U}}$ of $\mathbb{U}$-invariants of a
rational $\mathbb{U}$-module $M$ of finite positive dimension is
nonzero. 

\end{enumerate}
\end{lem}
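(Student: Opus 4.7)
My plan is to dispatch the three statements separately, using throughout the equivalence of categories recalled just before the lemma between vector $k$-groups and finite-dimensional $k$-vector spaces. Assertion (1) is essentially formal: closed $k$-subgroups and quotient $k$-groups of an abelian algebraic $k$-group are again abelian, and unipotence is inherited by closed $k$-subgroups and quotient $k$-groups (see \cite[IV.2]{DeGa70}); hence every closed $k$-subgroup and every quotient $k$-group of a vector $k$-group is abelian unipotent, which in characteristic zero means a vector $k$-group.

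For (2), I would transport the extension $0\to\mathbb{U}'\to\mathbb{U}\to\mathbb{U}''\to 0$ through the quoted equivalence to obtain a short exact sequence of finite-dimensional $k$-vector spaces. Every such sequence splits since $k$-vector spaces are both projective and injective $k$-modules, and transporting the resulting splitting back through the equivalence produces the desired isomorphism $h:\mathbb{U}\stackrel{\cong}{\to}\mathbb{U}'\times\mathbb{U}''$ compatible with the inclusion of $\mathbb{U}'$ and the projection onto $\mathbb{U}''$.

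For (3), I would argue by induction on $\dim_k\mathbb{U}$. The case $\dim_k\mathbb{U}=0$ is trivial. For the inductive step, combining (1) and (2) produces a decomposition $\mathbb{U}\cong\mathbb{U}_{0}\times\mathbb{U}_{1}$ with $\mathbb{U}_{0}\cong\mathbb{G}_{a,k}$ and $\dim_k\mathbb{U}_{1}<\dim_k\mathbb{U}$. Since $\mathbb{U}$ is abelian, the linear subspace $M^{\mathbb{U}_{0}}\subseteq M$ is $\mathbb{U}$-stable and carries a rational $\mathbb{U}_{1}$-module structure to which the induction hypothesis applies as soon as $M^{\mathbb{U}_{0}}\neq 0$. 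The latter follows from the classical description, valid in characteristic zero, of rational $\mathbb{G}_{a,k}$-actions on a finite-dimensional $k$-vector space $M$ as exponentials $t\mapsto\exp(tN)$ of a unique nilpotent $k$-linear endomorphism $N$ of $M$: indeed $M^{\mathbb{G}_{a,k}}=\ker N$ is nonzero because $N$ is nilpotent and $M\neq 0$.

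No step presents a substantive obstacle; the only mildly delicate point is to ensure that the exponential description of $\mathbb{G}_{a,k}$-actions is available over a possibly non-closed base field of characteristic zero, which is classical and directly referenced in \cite[IV.2.4]{DeGa70}.
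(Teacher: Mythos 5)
Your proof is correct, and it is essentially the justification the paper itself intends: the lemma is stated there without proof as an elementary consequence of the equivalence of categories with finite-dimensional $k$-vector spaces cited from \cite[IV.2.4]{DeGa70}, which is exactly what you use for (1) and (2), while your inductive argument for (3) via the exponential description $t\mapsto\exp(tN)$ of rational $\mathbb{G}_{a,k}$-representations in characteristic zero is a standard and valid way to obtain the nonvanishing of invariants (equivalently, one may quote the defining fixed-point property of unipotent groups directly). No gaps.
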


Recall \cite[I.3]{GIT} (see also \cite[4.2]{HuyLe}) that a \emph{quasi-coherent
$G$-sheaf of $\mathcal{O}_{X}$-modules} on a $k$-scheme $X$ with
an action $\mu:G\times X\to X$ of an algebraic $k$-group $G$ is
a pair $(\mathcal{F},\theta)$ consisting of a coherent sheaf of $\mathcal{O}_{X}$-modules
$\mathcal{F}$ and an isomorphism $\theta:\mu^{*}\mathcal{F}\stackrel{\cong}{\rightarrow}\mathrm{p}_{2}^{*}\mathcal{F}$
of coherent sheaves of $\mathcal{O}_{G\times X}$-modules, called
a \emph{$G$-linearization} of $\mathcal{F}$, that satisfies the
cocycle relation $(m_{G}\times\mathrm{id}_{X})^{*}\theta=\mathrm{p}_{23}^{*}\theta\circ(\mathrm{id}_{G}\times\mu)^{*}\theta$
on $G\times G\times X$, where $m_{G}:G\times G\to G$ is the group
law on $G$ and where $\mathrm{p}_{23}:G\times G\times X\rightarrow G\times X$
is the projection onto the last two factors. In particular, when $G$
acts trivially on $X$, a $G$-linearization of $\mathcal{F}$ is
the same as a homomorphism of $G$ into the group $\mathrm{Aut}_{X}(\mathcal{F})$
of $\mathcal{O}_{X}$-module automorphisms of $\mathcal{F}$. Two
$G$-linearizations $\theta$ and $\theta'$ of $\mathcal{F}$ are
called \emph{equivalent} if there exists an $\mathcal{O}_{X}$-module
automorphism $\varphi$ of $\mathcal{F}$ such that $\mathrm{p}_{2}^{*}\varphi\circ\theta'=\theta\circ\mu^{*}\varphi$. 
\begin{lem}
\label{lem:linearization} Let $X$ be a normal $k$-variety endowed
with an action of a vector $k$-group $\mathbb{U}$. Then every coherent
invertible sheaf of $\mathcal{O}_{X}$-modules $\mathcal{L}$ admits
a $\mathbb{U}$-linearization $\theta_{\mathcal{L}}$ unique up to
equivalence. 
\end{lem}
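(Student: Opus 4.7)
The plan is to exploit two structural properties of the vector $k$-group $\mathbb{U}$. First, as a $k$-scheme, $\mathbb{U}\cong\mathbb{A}^{n}_{k}$, which gives $\mathrm{Pic}(\mathbb{U})=0$ and $H^{0}(\mathbb{U},\mathcal{O}_{\mathbb{U}}^{*})=k^{*}$. Combined with the normality and integrality of $X$, this yields the two key inputs: the projection $p_{2}\colon \mathbb{U}^{r}\times_{k}X\to X$ induces an isomorphism $\mathrm{Pic}(X)\xrightarrow{\sim}\mathrm{Pic}(\mathbb{U}^{r}\times_{k}X)$ for every $r\geq 1$ (homotopy invariance of the Picard group for trivial affine-space bundles over normal integral bases) and an isomorphism $H^{0}(X,\mathcal{O}_{X}^{*})\xrightarrow{\sim}H^{0}(\mathbb{U}^{r}\times_{k}X,\mathcal{O}^{*})$ (units in $R[t_{1},\ldots,t_{rn}]$ over an integral domain $R$ lie in $R$). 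Second, $\mathbb{U}$ is unipotent, hence has no nontrivial $k$-group character, which ultimately accounts for the rigidity of linearizations.

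For existence, let $\mu,p_{2}\colon \mathbb{U}\times X\to X$ denote the action and the projection, and consider the line bundle $\mathcal{N}:=\mu^{*}\mathcal{L}\otimes p_{2}^{*}\mathcal{L}^{-1}$ on $\mathbb{U}\times X$. Since $\mu$ and $p_{2}$ coincide along $\{e\}\times X$, the restriction of $\mathcal{N}$ to $\{e\}\times X$ is trivial, so $\mathcal{N}$ itself is trivial by the Picard isomorphism. Choose an isomorphism $\theta\colon \mu^{*}\mathcal{L}\xrightarrow{\sim}p_{2}^{*}\mathcal{L}$; after multiplying by the pullback of a suitable unit on $X$, we may normalize $\theta_{|\{e\}\times X}=\mathrm{id}_{\mathcal{L}}$. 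On $\mathbb{U}\times\mathbb{U}\times X$ the two sides of the cocycle identity differ by a global unit $f$ which, by the units isomorphism, is pulled back from $X$. Restricting the putative cocycle identity to $\{(e,e)\}\times X$ forces $f=1$, so $\theta$ is indeed a $\mathbb{U}$-linearization of $\mathcal{L}$.

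For uniqueness, first observe that any $\mathbb{U}$-linearization $\theta'$ satisfies $\theta'_{|\{e\}\times X}=\mathrm{id}_{\mathcal{L}}$: applying its own cocycle identity at $(e,e,x)$ gives $\theta'_{|\{e\}\times X}\circ\theta'_{|\{e\}\times X}=\theta'_{|\{e\}\times X}$. Given a second linearization $\theta$, the ratio $h:=\theta'\theta^{-1}$ is an automorphism of $p_{2}^{*}\mathcal{L}$, that is, a unit on $\mathbb{U}\times X$ that descends to a unit on $X$ and satisfies $h_{|\{e\}\times X}=1$; being constant in the $\mathbb{U}$-direction it is identically $1$, so $\theta=\theta'$. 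This is stronger than uniqueness up to equivalence. The only real obstacle is the verification of the Picard and units isomorphisms for $\mathbb{U}\times_{k}X$, both of which are immediate consequences of $\mathbb{U}\cong\mathbb{A}^{n}_{k}$ together with the normality and integrality of $X$; once these are in hand, both existence and uniqueness reduce to restricting the cocycle identity to the identity section.
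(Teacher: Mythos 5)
Your proof is correct, and it rests on exactly the two inputs that the paper's proof invokes, namely that for $\mathbb{U}\cong\mathbb{A}_{k}^{n}$ and $X$ a normal $k$-variety the pullbacks $H^{i}(X,\mathcal{O}_{X}^{*})\to H^{i}(\mathbb{U}^{r}\times X,\mathcal{O}^{*})$, $i=0,1$, are isomorphisms. The difference is in how these inputs are used: the paper simply combines them with a citation of Brion's linearization lemma \cite[Lemma 2.13]{Bri15}, whereas you unpack the content of that lemma in this special case by a direct descent-and-cocycle computation -- trivializing $\mu^{*}\mathcal{L}\otimes\mathrm{p}_{2}^{*}\mathcal{L}^{-1}$ via the Picard isomorphism, normalizing along the identity section, and killing the cocycle discrepancy and the difference of two linearizations by the units isomorphism together with restriction to $\{e\}\times X$. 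This buys a self-contained and more elementary argument, and it even yields a slightly stronger conclusion than stated: since every global unit on $X$ is automatically $\mathbb{U}$-invariant (again by the units isomorphism), equivalent linearizations coincide, so the $\mathbb{U}$-linearization of $\mathcal{L}$ is literally unique, not merely unique up to equivalence. The only points you leave as standard -- homotopy invariance of $\mathrm{Pic}$ for normal varieties and of units for integral varieties under a factor $\mathbb{A}^{m}_{k}$ -- are precisely the facts the paper also asserts without proof, so no gap is introduced.
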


\begin{proof}
This follows from \cite[Lemma 2.13]{Bri15} and the fact that since
$\mathbb{U}\cong\mathbb{A}_{k}^{n}$, for every normal $k$-variety
$X$, the pullback homomorphisms $\mathrm{p}_{X}^{*}:H^{i}(X,\mathcal{O}_{X}^{*})\to H^{i}(X\times\mathbb{U},\mathcal{O}_{X\times\mathbb{U}}^{*})$,
$i=0,1$, are isomorphisms. 
\end{proof}

\subsubsection{Vector group structures and vector group varieties}
\begin{defn}
A \emph{vector group variety} is a $k$-variety $X$ endowed with
an effective action $\mu:\mathbb{U}\times X\to X$ of a vector $k$-group
$\mathbb{U}$ which has a Zariski dense open orbit $U_{X}$. The action
$\mu$ is said to define a \emph{vector group} \emph{structure} on
$X$. Two vector group structures $\mu:\mathbb{U}\times X\to X$ and
$\mu':\mathbb{U}'\times X\to X$ on $X$ are said to have the same
equivalence class if there exists an isomorphism of $k$-groups $\alpha:\mathbb{U}'\to\mathbb{U}$
and a $k$-automorphism $\varphi$ of $X$ such that $\varphi\circ\mu'=\mu\circ(\alpha\times\varphi)$. 
\end{defn}

\begin{lem}
\label{lem:open-orbit} Let $X$ be a vector group variety with open
orbit $U_{X}$. Then $U_{X}$ is a trivial $\mathbb{U}$-torsor. In
particular, $U_{X}$ contains a $k$-point of $X$. 
\end{lem}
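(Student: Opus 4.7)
The plan is to recognize the open orbit $U_X$ as a $\mathbb{U}$-torsor over $k$ and then to apply vanishing of Galois cohomology for vector groups in order to trivialize it.

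First I would verify that $U_X$ descends to a well-defined open $k$-subscheme of $X$. Since $\mu$ is defined over $k$, the natural action of the Galois group $\Gamma$ on $X_{\bar{k}}$ permutes the orbits of $\mathbb{U}_{\bar{k}}$ while preserving their dimensions, and therefore fixes the unique orbit of maximal dimension $\dim X$. Thus $U_{X,\bar{k}}$ is $\Gamma$-stable and descends to a dense open $k$-subscheme of $X$, which I keep denoting by $U_X$.

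Next I would show that $\mathbb{U}$ acts scheme-theoretically freely on $U_X$. Let $\mathbb{U}_{0}\subseteq\mathbb{U}_{\bar{k}}$ denote the scheme-theoretic stabilizer of a geometric point $x\in U_X(\bar{k})$: since $\mathbb{U}$ is commutative, the stabilizers of any two points in the same orbit coincide, hence $\mathbb{U}_{0}$ is independent of $x$ and acts trivially on the whole of $U_{X,\bar{k}}$. The fixed locus of $\mathbb{U}_{0}$ is a closed subscheme of the separated $k$-scheme $X_{\bar{k}}$ containing the dense open subset $U_{X,\bar{k}}$, so it equals $X_{\bar{k}}$; by Galois descent $\mathbb{U}_{0}$ comes from a $k$-subgroup of $\mathbb{U}$, and effectivity of $\mu$ then forces this subgroup to be trivial. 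Equivalently, the morphism $\mathbb{U}\times U_X\to U_X\times U_X$, $(u,x)\mapsto(\mu(u,x),x)$, is an isomorphism, which exhibits $U_X\to\mathrm{Spec}(k)$ as a $\mathbb{U}$-torsor.

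To conclude, I would appeal to the fact that $\mathbb{U}\cong\mathbb{G}_{a,k}^{n}$ for some $n\geq0$, so that the additive form of Hilbert's Theorem $90$ gives $H^{1}(k,\mathbb{U})=0$. The torsor $U_X$ is therefore trivial and admits a $\mathbb{U}$-equivariant $k$-isomorphism $U_X\cong\mathbb{U}$; the image of the neutral element $e\in\mathbb{U}(k)$ under the inverse is the desired $k$-rational point of $X$. The principal technical step is the rigidity argument showing that the common stabilizer $\mathbb{U}_{0}$ acts trivially on all of $X$, which relies crucially on separatedness of $X$ together with density of $U_X$; the remaining ingredients — the equivalence between a free transitive action and a torsor structure, and the vanishing of first Galois cohomology for $\mathbb{G}_{a}$ — are standard.
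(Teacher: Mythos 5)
Your proof is correct and follows essentially the same route as the paper: identify $U_X$ as a $\mathbb{U}$-torsor over $k$ and conclude by the additive form of Hilbert's Theorem 90. The extra steps you supply (Galois-stability of the open orbit and the commutativity/density/effectivity argument for triviality of the stabilizer) are just a careful unpacking of the paper's one-line assertion that effectiveness plus density of $U_X$ makes $(\mu,\mathrm{p}_2):\mathbb{U}\times_k U_X\to U_X\times U_X$ an isomorphism.
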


\begin{proof}
Since the action of $\mathbb{U}$ is effective and $U_{X}$ is Zariski
dense, the morphism $(\mu,\mathrm{p}_{2}):\mathbb{U}\times_{k}U_{X}\to U_{X}\times U_{X}$
is an isomorphism, i.e., $U_{X}$ endowed with the induced action
of $\mathbb{U}$ is a $\mathbb{U}$-torsor. The conclusion then follows
from the additive form of Hilbert Theorem $90$ which asserts that
every such torsor is trivial. 
\end{proof}
\begin{example}
Since all orbits of unipotent group actions on a quasi-affine $k$-variety
are closed \cite[Theorem 2]{Ro61}, Lemma \ref{lem:open-orbit} implies
that a quasi-affine vector group variety $X$ is a trivial $\mathbb{U}$-torsor.
In particular, $\mathbb{A}_{k}^{n}$ is the unique quasi-affine $k$-variety
with a $\mathbb{G}_{a,k}^{n}$-structure and this structure is unique
up to isomorphism. On the other hand, by Sumihiro's equivariant completion
\cite[Theorem 3]{Su74}, every normal vector group $k$-variety $X$
admits a $\mathbb{U}$-equivariant open immersion $j:X\hookrightarrow\bar{X}$
into a complete vector group $k$-variety $\bar{X}$. When $X$ is
smooth, the existence of equivariant resolution of singularities \cite[Theorem 3.36 and Proposition 3.9.1]{Ko07}
implies in addition that $\bar{X}$ can be chosen to be smooth and
such that $\bar{X}\setminus j(X)$ is the support of a $\mathbb{U}$-stable
smooth normal crossing divisor on $\bar{X}$. 
\end{example}

\begin{prop}
\label{prop:adrien} Let $X$ be a $k$-variety endowed with a vector
group structure $\mu:\mathbb{U}\times X\to X$, let $f:X\to Y$ be
a proper morphism to a $k$-variety $Y$ such that $f_{*}\mathcal{O}_{X}=\mathcal{O}_{Y}$
and let $i:F\hookrightarrow X$ be the scheme-theoretic fiber of $f$
over a $k$-point $y_{0}$ of $Y$ in the image by $f$ of the open
$\mathbb{U}$-orbit $U_{X}$. Then there exists an extension of vector
$k$-groups $0\to\mathbb{U}'\stackrel{a}{\to}\mathbb{U}\stackrel{b}{\to}\bar{\mathbb{U}}\to0$
such that the following hold: 

\begin{enumerate}

\item The variety $Y$ is endowed with a vector group structure $\mu_{Y}:\bar{\mathbb{U}}\times_{k}Y\to Y$
with open $\bar{\mathbb{U}}$-orbit $U_{Y}$ and $f:X\to Y$ is equivariant
with respect to the homomorphism $b:\mathbb{U}\to\bar{\mathbb{U}}$. 

\item The scheme $F$ is a $k$-variety endowed with a vector group
structure $\mu_{F}:\mathbb{U}'\times_{k}F\to F$ and the closed immersion
$i:F\hookrightarrow X$ is equivariant with respect to the homomorphism
$a:\mathbb{U}'\to\mathbb{U}$. 

\item Given any section $c:\bar{\mathbb{U}}\to\mathbb{U}$ of $b:\mathbb{U}\to\bar{\mathbb{U}}$,
the morphism \[j:\mu\circ(c\times i)\circ(\mu_{Y}^{-1}(\cdot,y_{0})\times\mathrm{id}_{F}):U_{Y}\times F\stackrel{\cong}{\to}\bar{\mathbb{U}}\times F\to\mathbb{U}\times X\to X\]
is a $\mathbb{U}'\times\bar{\mathbb{U}}$-equivariant open immersion
with image $f^{-1}(U_{Y})$. 

\end{enumerate}
\end{prop}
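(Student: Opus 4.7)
The plan is to first descend the $\mathbb{U}$-action on $X$ to an action on $Y$ by Blanchard's rigidity lemma for proper morphisms with $f_{*}\mathcal{O}_{X}=\mathcal{O}_{Y}$, then produce the extension as the sequence associated to the stabilizer of $y_{0}$, and finally construct the trivialization $j$ of part (3) together with an explicit inverse; part (2) is then harvested as a consequence of this trivialization rather than proved in isolation.

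\textbf{Descent and the extension.} Since $f$ is proper with $f_{*}\mathcal{O}_{X}=\mathcal{O}_{Y}$ and $\mathbb{U}$ is connected, Blanchard's lemma provides a unique $\mathbb{U}$-action $\mu_{Y}$ on $Y$ for which $f$ is equivariant; the image $f(U_{X})$ is then an open dense $\mathbb{U}$-orbit $U_{Y}\subset Y$ containing $y_{0}$. Let $\mathbb{U}'\subset\mathbb{U}$ be the stabilizer of $y_{0}$. Commutativity gives $u'\cdot(u\cdot y_{0})=u\cdot(u'\cdot y_{0})=u\cdot y_{0}$ for $u'\in\mathbb{U}'$ and $u\in\mathbb{U}$, so $\mathbb{U}'$ fixes $U_{Y}$ and, by density, all of $Y$. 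Hence $\mathbb{U}'$ coincides with the kernel of $\mathbb{U}\to\mathrm{Aut}_{k}(Y)$, and Lemma \ref{lem:Unipotent-basic-prop}(1) shows that $0\to\mathbb{U}'\xrightarrow{a}\mathbb{U}\xrightarrow{b}\bar{\mathbb{U}}:=\mathbb{U}/\mathbb{U}'\to 0$ is an exact sequence of vector $k$-groups. The induced faithful action of $\bar{\mathbb{U}}$ on $Y$ has $U_{Y}$ as a $\bar{\mathbb{U}}$-torsor containing the $k$-point $y_{0}$, hence trivial by Lemma \ref{lem:open-orbit}. This proves (1).

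\textbf{Trivialization and integrality of $F$.} Since $\mathbb{U}'$ fixes $y_{0}$, it preserves the scheme-theoretic fiber $F=f^{-1}(y_{0})$, so $\mu$ restricts to an action $\mu_{F}\colon\mathbb{U}'\times F\to F$ and $i\colon F\hookrightarrow X$ is automatically $a$-equivariant. Given a section $c$ of $b$ as in (3), define
\[
 j(y,x)=c\bigl(\mu_{Y}^{-1}(y,y_{0})\bigr)\cdot i(x)\quad\text{and}\quad g(x')=\bigl(f(x'),\,c(\mu_{Y}^{-1}(f(x'),y_{0}))^{-1}\cdot x'\bigr).
\]
Both $j$ and $g$ are well-defined morphisms landing respectively in $f^{-1}(U_{Y})$ and $U_{Y}\times F$ (using $b\circ c=\mathrm{id}_{\bar{\mathbb{U}}}$ together with $b$-equivariance of $f$), and a direct computation yields $j\circ g=\mathrm{id}$ and $g\circ j=\mathrm{id}$. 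The $\mathbb{U}'\times\bar{\mathbb{U}}$-equivariance of $j$ asserted in (3) is a consequence of the commutativity of $\mathbb{U}$, which lets $a(u')$ and $c(\bar{u}_{0})$ commute when combined. The isomorphism $j$ identifies $f^{-1}(U_{Y})$, which is integral as an open subscheme of $X$, with $U_{Y}\times F$; since $U_{Y}\cong\bar{\mathbb{U}}\cong\mathbb{A}_{k}^{\dim\bar{\mathbb{U}}}$ is geometrically integral, this forces $F$ to be integral, hence a $k$-variety. Choosing a $k$-point $x_{1}\in U_{X}$ by Lemma \ref{lem:open-orbit} and setting $x_{0}:=c(\bar{u})^{-1}\cdot x_{1}$ for the unique $\bar{u}\in\bar{\mathbb{U}}(k)$ with $\bar{u}\cdot y_{0}=f(x_{1})$, we have $x_{0}\in(U_{X}\cap F)(k)$; under $g$ the open dense subset $U_{X}\subset f^{-1}(U_{Y})$ corresponds to $U_{Y}\times(\mathbb{U}'\cdot x_{0})$, forcing $\mathbb{U}'\cdot x_{0}$ to be open and dense in $F$. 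This yields (2).

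\textbf{Main obstacle.} The delicate point is that $F$ is defined only as a scheme-theoretic fiber with no a priori flatness control on $f$, so neither its reducedness nor its irreducibility is automatic. The argument deliberately avoids any direct analysis of the local structure of $f$ along $F$ by constructing $j$ and $g$ from the group-theoretic data alone, and then transferring integrality from $f^{-1}(U_{Y})\subset X$ to $F$ through the product decomposition $f^{-1}(U_{Y})\cong U_{Y}\times F$; this transfer is harmless precisely because $U_{Y}$ is an affine space over $k$.
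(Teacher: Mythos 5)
Your proposal is correct and follows essentially the same route as the paper: Blanchard's lemma to descend the action to $Y$, the stabilizer of $y_{0}$ to produce the split extension, and the section $c$ to trivialize $f^{-1}(U_{Y})$ as $U_{Y}\times F$, with your explicit inverse $g$ replacing the paper's cartesian-square argument and your faithfully flat transfer of integrality to $F$ making explicit a point the paper leaves implicit. The only detail left unsaid (in both your write-up and the paper's) is effectiveness of the $\mathbb{U}'$-action on $F$, which is immediate since $U_{X}$ is a $\mathbb{U}$-torsor, so the stabilizer of $x_{0}\in U_{X}\cap F$ in $\mathbb{U}$, hence in $\mathbb{U}'$, is trivial.
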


\begin{proof}
By Blanchard's lemma \cite[Theorem 7.2.1]{Bri17}, there exists a
unique action $\nu:\mathbb{U}\times Y\to Y$ such that $f$ is $\mathbb{U}$-equivariant.
Let $\mathbb{U}'\subset\mathbb{U}$ be the stabilizer of $y_{0}$
and let $\bar{\mathbb{U}}=\mathbb{U}/\mathbb{U}'$. Since $y_{0}$
belongs to $f(U_{X})$, the $\mathbb{U}$-orbit of $y_{0}$ is a constructible
set which is not contained in any proper closed subset of $Y$. It
thus contains a Zariski dense open subset of $Y$, hence, being homogeneous
under the action of $\mathbb{U}$, is a Zariski dense open subset
of $Y$. This implies that $\mathbb{U}'$ acts trivially on $Y$ and
that the induced action $\mu_{Y}:\bar{\mathbb{U}}\times Y\to Y$ of
$\bar{\mathbb{U}}$ is a vector group structure on $Y$ with the property
that $f:X\to Y$ is equivariant with respect to the $k$-group homomorphism
$b:\mathbb{U}\to\bar{\mathbb{U}}$. This proves (1). The closed subscheme
$F$ is $\mathbb{U}'$-stable, with $\mathbb{U}'$-action $\mu_{F}:\mathbb{U}'\times F\to F$
induced by $\mu$. Given a section $c:\bar{\mathbb{U}}\to\mathbb{U}$
of $b$, we have the following cartesian square of $\bar{\mathbb{U}}$-equivariant
morphisms \[\xymatrix@C=5em{\bar{\mathbb{U}}\times F\ar[d]_{\mathrm{id}_{\bar{\mathbb{U}}}\times f}\ar[r]^{\mu\circ( c \times i)} & X\ar[d]^{f}\\ \bar{\mathbb{U}}\times\{y_0\}\ar[r]^{\mu_{Y}(\cdot,y_{0})} & Y } \]where
$\bar{\mathbb{U}}$ acts on $\bar{\mathbb{U}}\times F$ and $\bar{\mathbb{U}}\times\{y_{0}\}$
by translations on the first factor and on $X$ by $\mu\circ(c\times\mathrm{id}_{X})$.
For every $v\in\bar{\mathbb{U}}(k)$, $\mu\circ(c(v)\times\mathrm{id}_{X})$
is an automorphism of $X$ which maps $F$ isomorphically onto the
scheme-theoretic fiber of $f$ over the point $\mu_{Y}(v,y_{0})$.
Since $\mu_{Y}(\cdot,y_{0}):\bar{\mathbb{U}}\times\{y_{0}\}\to Y$
is an open immersion with image $U_{Y}$, it follows that $\mu\circ(c\times i)$
is an open immersion with image $f^{-1}(U_{Y})$. Furthermore, $\mu\circ(c\times i)$
is equivariant for the isomorphism $h=(c,a):\bar{\mathbb{U}}\times\mathbb{U}'\to\mathbb{U}$
with respect to the product action of $\bar{\mathbb{U}}\times\mathbb{U}'$
on $\bar{\mathbb{U}}\times F$ and the $\mathbb{U}$-action on $X$.
Assertion (3) follows. Finally, assertion (2) follows from the observation
that the intersection of the inverse image of $U_{X}\subset f^{-1}(U_{Y})$
by $\mu\circ(c\times i)$ with $\{0\}\times F$ is a Zariski dense
$\mathbb{U}'$-stable open subset $U_{F}$ of $F$ which is a principal
homogeneous space of $\mathbb{U}'$. 
\end{proof}
Recall that a coherent locally free sheaf $\mathcal{E}$ on a $k$-variety
$X$ is called \emph{simple} if its only endomorphisms are scalar
homothethies. Lemma \ref{lem:linearization} and Proposition \ref{prop:adrien}
imply the following result. 
\begin{cor}
\label{cor:Simple-no-struct} For a simple coherent locally free sheaf
$\mathcal{E}$ of rank $r\geq2$ on a normal $k$-variety $X$, the
total space of the projective bundle $\pi:\mathbb{P}_{X}(\mathcal{E})\to X$
does not admit a vector group structure. 
\end{cor}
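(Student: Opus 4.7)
The plan is to argue by contradiction. Suppose that $\mathbb{P}_X(\mathcal{E})$ carries a vector group structure $\tilde\mu \colon \mathbb{U} \times \mathbb{P}_X(\mathcal{E}) \to \mathbb{P}_X(\mathcal{E})$. Since the projection $\pi$ is proper and satisfies $\pi_*\mathcal{O}_{\mathbb{P}_X(\mathcal{E})} = \mathcal{O}_X$, Proposition \ref{prop:adrien} applies and yields an exact sequence $0 \to \mathbb{U}' \to \mathbb{U} \to \bar{\mathbb{U}} \to 0$ of vector $k$-groups, a vector group structure on $X$ by $\bar{\mathbb{U}}$ with respect to which $\pi$ becomes equivariant, and a $\mathbb{U}'$-vector group structure on the fiber $F = \pi^{-1}(y_0) \cong \mathbb{P}_k^{r-1}$ over any $k$-point $y_0$ of the open $\bar{\mathbb{U}}$-orbit of $X$. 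Because $r \geq 2$, the fiber $F$ has positive dimension, so $\mathbb{U}'$ is a nontrivial vector $k$-group whose induced action on $\mathbb{P}_X(\mathcal{E})$ is nontrivial.

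Next I would apply Lemma \ref{lem:linearization} to equip the tautological invertible sheaf $\mathcal{O}_{\mathbb{P}_X(\mathcal{E})}(1)$ with a $\mathbb{U}$-linearization, and push it forward along $\pi$. Flat base change applied to the proper morphism $\pi$ identifies the resulting structure with a $\mathbb{U}$-linearization of $\pi_*\mathcal{O}_{\mathbb{P}_X(\mathcal{E})}(1) = \mathcal{E}$, compatible with the $\mathbb{U}$-action on $X$ factoring through $\bar{\mathbb{U}}$. Since $\mathbb{U}'$ is contained in the kernel of $\mathbb{U} \to \bar{\mathbb{U}}$, it acts trivially on $X$, and restricting the linearization to $\mathbb{U}'$ amounts to a homomorphism of $k$-group schemes $\mathbb{U}' \to \mathrm{Aut}_{\mathcal{O}_X}(\mathcal{E})$.

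Here simplicity of $\mathcal{E}$ intervenes: it forces $\mathrm{End}_{\mathcal{O}_X}(\mathcal{E}) = k \cdot \mathrm{id}$, and hence identifies $\mathrm{Aut}_{\mathcal{O}_X}(\mathcal{E})$ with $\mathbb{G}_{m,k}$ as a $k$-group scheme. Since $\mathbb{U}'$ is unipotent, the only $k$-group homomorphism from $\mathbb{U}'$ to $\mathbb{G}_{m,k}$ is trivial, so $\mathbb{U}'$ acts trivially on $\mathcal{E}$, and therefore trivially on $\mathbb{P}_X(\mathcal{E})$, contradicting the conclusion of the first paragraph. The only point in the argument that requires real care, rather than being a direct invocation of the preceding results, is the verification that the pushforward of the $\mathbb{U}$-linearization on $\mathcal{O}(1)$ is genuinely a $\mathbb{U}$-linearization of $\mathcal{E}$; this is precisely where local freeness (and not merely coherence) of $\mathcal{E}$ is used, via flat base change along the action morphism.
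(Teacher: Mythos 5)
Your argument is correct and follows essentially the same route as the paper's proof: apply Proposition \ref{prop:adrien} to $\pi$ to produce a nontrivial subgroup $\mathbb{U}'$ acting trivially on $X$, use Lemma \ref{lem:linearization} and the pushforward $\pi_*\mathcal{O}_{\mathbb{P}_X(\mathcal{E})}(1)\cong\mathcal{E}$ to get a homomorphism $\mathbb{U}'\to\mathrm{Aut}_X(\mathcal{E})$, and invoke simplicity to see that this group is only the scalars, which a positive-dimensional unipotent group cannot map to nontrivially. The only cosmetic difference is that you phrase the contradiction via triviality of the $\mathbb{U}'$-action versus the paper's impossibility of an injective homomorphism into $\mathbb{G}_m$, which is the same point.
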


\begin{proof}
Assume that $\mathbb{P}_{X}(\mathcal{E})$ admits a vector group structure
given by the action of vector group $\mathbb{U}$. By Lemma \ref{lem:linearization},
the invertible sheaf $\mathcal{O}_{\mathbb{P}_{X}(\mathcal{E})}(1)$
is canonically $\mathbb{U}$-linearized. By Proposition \ref{prop:adrien},
$\pi:\mathbb{P}_{X}(\mathcal{E})\to X$ is $\mathbb{U}$-equivariant
for a uniquely determined $\mathbb{U}$-action on $X$, which factors
through an effective action of a non-trivial quotient $\bar{\mathbb{U}}=\mathbb{U}/\mathbb{U}'$
defining a vector group structure on $X$. Since $\pi$ is $\mathbb{U}$-equivariant,
$\mathcal{E}\cong\pi_{*}\mathcal{O}_{\mathbb{P}_{X}(\mathcal{E})}(1)$
is endowed with an induced $\mathbb{U}$-linearization for the action
on $X$, hence with a linearization for the trivial action of the
positive dimensional vector group $\mathbb{U}'$. The latter is determined
by some group homomorphism $\mathbb{U}'\to\mathrm{Aut}_{X}(\mathcal{E})$
which is injective by Proposition \ref{prop:adrien}. But this is
impossible since $\mathrm{Aut}_{X}(\mathcal{E})\cong\mathbb{G}_{m,X}$
by hypothesis. 
\end{proof}
\vspace{-1.6em}

\section{Linear sections of $G(2,5)$ and their rational linear projections }

We collect results concerning linear Schubert sub-varieties of dimension
$\geq2$ of the Grassmannian $\mathrm{G}(2,5)$ of $2$-dimensional
$k$-vector subspaces of $k^{\oplus5}$ and of its smooth linear sections
in the Pl\"ucker embedding. We then review the description of the
rational maps given by projections with respect to these linear Schubert
sub-varieties. Over algebraically closed fields, all this material
is classical, see e.g. \cite{Don77,Fuj81,PV99,Todd30}. \vspace{-0.7em}

\subsection{Linear Schubert subvarieties of $G(2,5)$ and its hyperplane sections}

We put $G=\mathbb{G}_{k}(V^{\vee},2)\cong G(2,5)$ for some fixed
$5$-dimensional $k$-vector space $V$. We denote by $0\to\mathcal{S}\to V_{G}^{\vee}\to\mathcal{Q\to}0$
the universal sequence on $G$ and by $j_{P}:G\hookrightarrow\mathbb{P}_{k}(\Lambda^{2}V^{\vee})$
the Pl\"ucker embedding. For any algebraic extension $k'$ of $k$,
we interpret $k'$-points of $G_{k'}$ either as $2$-dimensional
$k'$-vector subspaces $E\subset V_{k'}$ or as their corresponding
lines $\mathbb{P}_{k'}(E^{\vee})$ in $\mathbb{P}_{k'}(V_{k'}^{\vee})$.
For concrete examples, we fix the following coordinate convention: 
\begin{notation}
\label{nota:Plucker-Coord}For a chosen basis $e_{1},\ldots,e_{5}$
of $V$ with dual basis $e_{1}^{\vee},\ldots,e_{5}^{\vee}$, we identify
$\mathbb{P}_{k}(V^{\vee})$ with $\mathbb{P}_{k}^{4}$ and $\mathrm{G}(2,5)$
with the closed subvariety of $\mathbb{P}_{k}(\Lambda^{2}V^{\vee})=\mathbb{P}_{k}^{9}$
endowed with the Pl\"ucker coordinates $\ensuremath{w_{ij}=e_{i}^{\vee}\wedge e_{j}^{\vee}}$,
$\ensuremath{1\leq i<j\leq5}$, defined by the equations  \[ \left\{\begin{array}{l} w_{12}w_{34}-w_{13}w_{24}+w_{14}w_{23}=0\\ w_{12}w_{35}-w_{13}w_{25}+w_{15}w_{23}=0\\ w_{12}w_{45}-w_{14}w_{25}+w_{15}w_{24}=0\\ w_{13}w_{45}-w_{14}w_{35}+w_{15}w_{34}=0\\ w_{23}w_{45}-w_{24}w_{35}+w_{25}w_{34}=0 \end{array} \right.\] 
\end{notation}

\subsubsection{Solids and planes in $\mathrm{G}(2,5)$ }

We consider the following linear Schubert sub-varieties of $G$:
\begin{defn}
Let $\{V_{1}\subset V_{3}\subset V_{4}\}$ be a partial flag of $k$-vector
subspaces of $V$, with $\dim_{k}V_{i}=i$:

$\bullet$ The\emph{ $\sigma_{3,0}$-solid} $\sigma_{3,0}(V_{1})\cong\mathbb{P}_{k}((V/V_{1})^{\vee})$
associated to $V_{1}$ is the zero scheme of the homomorphism $V_{1,G}\hookrightarrow V_{G}\to\mathcal{S}^{\vee}$.
Its intersection $\sigma_{3,1}(V_{1}\subset V_{4})\cong\mathbb{P}_{k}((V_{4}/V_{1})^{\vee})$
with the zero scheme of the homomorphism $(V/V_{4})_{G}^{\vee}\hookrightarrow V_{G}^{\vee}\to\mathcal{Q}$
is called the\emph{ $\sigma_{3,1}$-plane} associated to $\{V_{1}\subset V_{4}\}$. 

$\bullet$ The\emph{ $\sigma_{2,2}$-plane} $\sigma_{2,2}(V_{3})=\mathbb{G}_{k}(V_{3}^{\vee},2)\cong\mathbb{P}_{k}(V_{3})$
associated to $V_{3}$ is the zero scheme of the homomorphism $(V/V_{3})_{G}^{\vee}\hookrightarrow V_{G}^{\vee}\to\mathcal{Q}$. 
\end{defn}

The above sub-schemes are linear subspaces of $G$ in the Pl\"ucker
embedding $G\subset\mathbb{P}_{k}(\Lambda^{2}V^{\vee})$, given respectively
as the intersections $G\cap\mathbb{P}_{k}(\Lambda^{2}V^{\vee}/\Lambda^{2}(V/V_{1})^{\vee})$,
$\sigma_{3,0}(V_{1})\cap\mathbb{P}_{k}(\Lambda^{2}V_{4}^{\vee})$
and $G\cap\mathbb{P}_{k}(\Lambda^{2}V_{3}^{\vee})$. It follows from
\cite{Todd30} that they are the only linear $k$-subspaces of dimension
$\geq2$ contained in $G$.\footnote{The two types of planes in $G$
are called planes of the first and second system in \cite{Todd30},
and in \cite{Fuj81}, planes of vertex type and of non-vertex type,
respectively.} Geometrically, the closed points of $\sigma_{3,0}(V_{1})_{\bar{k}}$
and $\sigma_{3,1}(V_{1}\subset V_{4})_{\bar{k}}$ correspond respectively
to lines in $\mathbb{P}_{\bar{k}}(V_{\bar{k}}^{\vee})$ passing through
the point $\mathbb{P}_{\bar{k}}(V_{1,\bar{k}}^{\vee})$ and its subset
of those which are contained in the subspace $\mathbb{P}_{k}(V_{4}^{\vee})$.
The closed points of $\sigma_{2,2}(V_{2})_{\bar{k}}$ correspond to
lines contained in the subspace $\mathbb{P}_{\bar{k}}(V_{3,\bar{k}}^{\vee})$
of $\mathbb{P}_{\bar{k}}(V_{\bar{k}}^{\vee})$. 
\begin{rem}
The conormal sheaves in $G$ of a solid $\Pi=\sigma_{3,0}(V_{1})$
and of a plane $\Xi=\sigma_{2,2}(V_{3})$ are canonically isomorphic
to $\Omega_{\Pi}^{1}(1)\otimes V_{1,\Pi}$ and to $\Omega_{\Xi}^{1}(1)\otimes(V/V_{3})_{\Xi}^{\vee}$,
respectively. Indeed, since $\Pi$ is the zero scheme of $V_{1,G}\to\mathcal{S}^{\vee}$,
its conormal sheaf $\mathcal{C}_{\Pi/G}$ is isomorphic to $(\mathcal{S}\otimes V_{1,G})|_{\Pi}\cong\mathcal{S}|_{\Pi}\otimes V_{1,\Pi}$
and the canonical isomorphism $\mathcal{S}|_{\Pi}\cong\Omega_{\Pi}^{1}(1)$
is given by the following commutative diagram \[\xymatrix@R1em@C1em{ &  &  0 \ar[d] &  0 \ar[d] &  \\ 0  \ar[r] & \Omega_{\Pi}^{1}(1) \ar[r] \ar@{=}[d] & (V/V_{1})_{\Pi}^{\vee} \ar[r] \ar[d]  & \mathcal{O}_{\Pi}(1) \ar[r] \ar[d] & 0 \\ 0  \ar[r] & \mathcal{S}|_{\Pi} \ar[r] \ar[dr]^{0} & V_{\Pi}^{\vee} \ar[r] \ar[d] & \mathcal{Q}|_{\Pi} \ar[r] \ar[d] & 0 \\ 
 & & V_{1,\Pi}^{\vee} \ar@{=}[r] \ar[d] & V_{1,\Pi}^{\vee} \ar[d] \\ & & 0  & 0} \] Similarly, since $\Xi$ is a codimension $4$ local complete intersection
equal to the zero-scheme of the homomorphism $(V/V_{3})_{G}^{\vee}\to\mathcal{Q}$,
the conormal sheaf $\mathcal{C}_{\Xi/G}$ is canonically isomorphic
to $(\mathcal{Q}^{\vee}\otimes(V/V_{3})_{G}^{\vee})|_{\Xi}\cong(\mathcal{Q}^{\vee}|_{\Xi})\otimes(V/V_{3})_{\Xi}^{\vee}$,
and since $\mathcal{Q}|_{\Xi}$ equals the universal quotient sheaf
on $\Xi\cong\mathbb{G}_{k}(V_{3}^{\vee},2)$, the canonical isomorphism
$\mathbb{G}_{k}(V_{3}^{\vee},2)\cong\mathbb{P}_{k}(V_{3})$ gives
the identification $\mathcal{Q}^{\vee}|_{\Xi}\cong\Omega_{\Xi}^{1}(1)$.
\end{rem}

\begin{notation}
\label{nota:Stabilizers-Schubert}Given a $d$-dimensional $k$-vector
subspace $V_{d}\subset V$, $1\leq d\leq4$, we denote by $\Delta_{V_{d}}$
the stabilizer in $\mathrm{GL}_{k}(V^{\vee})$ of the subspace $(V/V_{d})^{\vee}\subset V^{\vee}$.
The choice of a splitting $V\cong V_{d}\oplus(V/V_{d})$ identifies
this $k$-group scheme with that consisting of block-matrices of the
form 
\begin{equation}
M(A_{5-d},A_{d},U)=\left(\begin{array}{cc}
A_{5-d} & U\\
0 & A_{d}
\end{array}\right)\in\mathrm{GL}_{k}((V/V_{d})^{\vee}\oplus V_{d}^{\vee})\label{eq:Stabilizer-solid}
\end{equation}
with $A_{5-d}\in\mathrm{GL}_{k}((V/V_{d})^{\vee})$, $A_{d}\in\mathrm{GL}_{k}(V_{d}^{\vee})$
and $U\in\mathrm{Hom}_{k}(V_{d}^{\vee},(V/V_{d})^{\vee})$. The associated
subgroups $\mathrm{P}\Delta_{V_{d}}$ of $\mathrm{PGL}_{k}(V^{\vee})$
correspond under the isomorphism $\mathrm{PGL}_{k}(V^{\vee})\cong\mathrm{Aut}_{k}(G)$
of (\ref{eq:Aut-Plucker}) to the stabilizer subgroups of the solid
$\sigma_{3,0}(V_{1})$ if $d=1$ and of the plane $\sigma_{2,2}(V_{3})$
if $d=3$. 
\end{notation}

\subsubsection{\label{subsec:Hyperplane-Sec} Smooth hyperplane sections }

By a hyperplane section of $G$, we mean the zero scheme $Z_{\langle s\rangle}$
of a non-zero global section $s\in H^{0}(G,\Lambda^{2}\mathcal{Q})=\Lambda^{2}V^{\vee}$,
equivalently the intersection of $G$ in the Pl\"ucker embedding
with the hyperplane $\mathbb{P}_{k}(\Lambda^{2}V^{\vee}/\langle s\rangle)$
of $\mathbb{P}_{k}(\Lambda^{2}V^{\vee})$. Denote by $\tilde{s}:V\to V^{\vee}$
the $k$-linear homomorphism corresponding to the form $s$ under
the canonical isomorphism $\mathrm{Hom}_{k}(V\otimes V,k)\cong\mathrm{Hom}_{k}(V,V^{\vee})$.
The fivefold $Z_{\langle s\rangle}$ is the \emph{isotropic Grassmannian}
$\mathrm{I}_{s}\mathbb{G}_{k}(V^{\vee},2)$: a closed point $E\subset V_{\bar{k}}$
of $G_{\bar{k}}$ belongs to $(Z_{\langle s\rangle})_{\bar{k}}$ if
and only if the homomorphism $\tilde{s}_{\bar{k}}|_{E}:E\to V_{\bar{k}}\to V_{\bar{k}}^{\vee}$
has image contained in $(V_{\bar{k}}/E)^{\vee}$, hence if and only
if $E$ is $s_{\bar{k}}$-isotropic. Considering the conormal sequence
\[
0\to\mathcal{C}_{Z_{\langle s\rangle}/G}=\Lambda^{2}\mathcal{Q}^{\vee}|_{Z_{\langle s\rangle}}\otimes_{\mathcal{O}_{Z\langle s\rangle}}\langle s\rangle_{Z_{\langle s\rangle}}\stackrel{d}{\to}\Omega_{G/k}^{1}|_{Z_{\langle s\rangle}}=\mathcal{H}om(\mathcal{Q}|_{Z_{\langle s\rangle}},\mathcal{S}|_{Z_{\langle s\rangle}})\to\Omega_{Z_{\langle s\rangle}/k}^{1}\to0
\]
we see that $Z_{\langle s\rangle,\bar{k}}$ is smooth at $E\subset V_{\bar{k}}$
if and only if the map 
\[
d|_{E}:\Lambda^{2}E\subset\mathrm{Hom}_{k}(E^{\vee}\otimes E^{\vee},k)\cong\mathrm{Hom}_{k}(E^{\vee},E)\to\mathrm{Hom}_{\bar{k}}(E^{\vee},(V_{\bar{k}}/E)^{\vee}),\quad f\mapsto\tilde{s}_{\bar{k}}|_{E}\circ f
\]
 is nonzero, hence if and only if $E\not\subset\mathrm{Ker}\tilde{s}_{\bar{k}}$.
In particular, $Z_{\langle s\rangle}$ is smooth if and only if $\tilde{s}$
has rank $4$. This yields a functorial correspondence between $k$-points
$\langle s\rangle^{\vee}$ of $\mathbb{P}_{k}(\Lambda^{2}V)\setminus\mathbb{G}_{k}(V,2)$
and smooth hyperplane sections $Z_{\langle s\rangle}$ of $G$ from
which we get in particular that the action of $\mathrm{Aut}_{k}(G)(k)\cong\mathrm{PGL}_{5}(k)$
on the set of smooth hyperplane sections $Z_{\langle s\rangle}$ of
$G$ is transitive. 

Given a smooth hyperplane section $Z_{\langle s\rangle}$, the $s$-orthogonal
$V^{\bot}=\mathrm{Ker}\tilde{s}$ of $V$ has dimension $1$. We put
$\bar{V}=V/V^{\bot}$ and denote by $\bar{s}\in\Lambda^{2}\bar{V}^{\vee}$
the symplectic form on $\bar{V}$ induced by $s$. Letting $\bar{G}=\mathbb{G}_{k}(\bar{V}^{\vee},2)$
with universal sequence $0\to\mathcal{S}_{\bar{G}}\to\bar{V}_{\bar{G}}^{\vee}\to\mathcal{Q}_{\bar{G}}\to0$,
the zero scheme $Q_{\langle\bar{s}\rangle}\subset\bar{G}$ of $\bar{s}$
is the L\emph{agrangian Grassmannian} \emph{$\mathrm{L}_{\bar{s}}\mathbb{G}_{k}(\bar{V}^{\vee},2)$}
whose $k$-points are the maximal $\bar{s}$-isotropic subspaces of
$\bar{V}$. The Pl\"ucker embedding $\bar{G}\hookrightarrow\mathbb{P}_{k}(\Lambda^{2}\bar{V}^{\vee})$
induces a closed immersion of $Q_{\langle\bar{s}\rangle}$ in $\mathbb{P}_{k}(\Lambda^{2}\bar{V}^{\vee}/\langle\bar{s}\rangle)$
as the zero scheme of the non-degenerate quadratic form $\bar{q}$
associated to the symmetric bi-linear form $\bar{b}\in\mathrm{Sym}^{2}(\Lambda^{2}\bar{V}^{\vee}/\langle\bar{s}\rangle)$
induced by the bi-linear form $b:\Lambda^{2}\bar{V}\otimes\Lambda^{2}\bar{V}\to k$,
$\bar{u}_{1}\wedge\bar{v}_{1}\otimes\bar{u}_{2}\wedge\bar{v}_{2}\mapsto(\bar{s}\wedge\bar{s})(\bar{u}_{1}\wedge\bar{v}_{1}\wedge\bar{u}_{2}\wedge\bar{v}_{2})$. 
\begin{lem}
\label{lem:Solids-and-planes-Z5}With the notation above, the following
hold:

\begin{enumerate}

\item Every smooth hyperplane section $Z_{\langle s\rangle}$ of
$G$ contains a unique solid $\Pi_{\langle s\rangle}=\sigma_{3,0}(V^{\bot})$
and conversely every solid of $G$ is contained in a (non-unique)
smooth hyperplane section of $G$.

\item A plane $\sigma_{2,2}(V_{3})$ of $G$ is contained in a smooth
hyperplane section $Z_{\langle s\rangle}$ if and only if $V^{\bot}\subset V_{3}$
and $V_{3}/V^{\bot}$ is $\bar{s}$-isotropic. In other words, the
$\sigma_{2,2}$-planes in $Z_{\langle s\rangle}$ are in one-to-one
correspondence with the $k$-points of $Q_{\langle\bar{s}\rangle}$
and they intersect the unique solid $\Pi_{\langle s\rangle}$ of $Z_{\langle s\rangle}$
along lines. 

\end{enumerate}
\end{lem}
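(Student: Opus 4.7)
The common principle for both assertions is the following translation: a linear Schubert subvariety $L \subset G$, parametrizing a family of $2$-dimensional subspaces of $V$, lies in $Z_{\langle s\rangle}$ if and only if the alternating form $s$ vanishes identically on every subspace $E$ parametrized by $L$. Equivalently, using the associated map $\tilde{s}:V\to V^{\vee}$ with $s(u,v)=\tilde{s}(u)(v)$, this means $\tilde{s}(E)\subset\mathrm{Ann}(E)=(V/E)^{\vee}$ for every such $E$. The plan is to combine this reformulation with the fact that smoothness of $Z_{\langle s\rangle}$ forces $V^\bot:=\ker\tilde{s}$ to be one-dimensional (so that the induced $\bar{s}\in\Lambda^2\bar{V}^\vee$ on $\bar{V}=V/V^\bot$ is non-degenerate, of rank $4$), and to invoke the elementary fact that the maximal $\bar{s}$-isotropic subspaces of $\bar{V}$ have dimension $2$.

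For (1), given a solid $\sigma_{3,0}(V_1)$ with $V_1 = \langle v_1\rangle$, containment in $Z_{\langle s\rangle}$ means that every $2$-dim subspace $E=\langle v_1,w\rangle\supset V_1$ is $s$-isotropic; since $w$ runs over all of $V/V_1$, this forces $\tilde{s}(v_1)=0$, i.e.\ $V_1\subset V^\bot$. Smoothness then gives $V_1=V^\bot$, proving uniqueness and the formula $\Pi_{\langle s\rangle}=\sigma_{3,0}(V^\bot)$; conversely, if $V_1=V^\bot$ then for any $E\supset V_1$ and any $v\in V_1$, $w\in E$ one has $s(v,w)=0$, so $\sigma_{3,0}(V^\bot)\subset Z_{\langle s\rangle}$. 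For the existence of a smooth hyperplane section containing an arbitrary solid $\sigma_{3,0}(V_1)$, I would choose a splitting $V = V_1\oplus W$ with $\dim W=4$, pick any non-degenerate symplectic form on $W$, and extend it to $s\in\Lambda^2 V^\vee$ by declaring $V_1$ to lie in $\ker\tilde{s}$; then $\tilde{s}$ has rank $4$, so $Z_{\langle s\rangle}$ is smooth and contains $\sigma_{3,0}(V_1)$.

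For (2), the containment $\sigma_{2,2}(V_3)\subset Z_{\langle s\rangle}$ amounts to $s|_{V_3}=0$, i.e.\ $V_3$ is $s$-isotropic. The main step is to deduce from this that $V^\bot\subset V_3$: otherwise $V_3\cap V^\bot=0$, so the projection $V\to\bar{V}$ would identify $V_3$ with a $3$-dimensional $\bar{s}$-isotropic subspace of $\bar{V}$, contradicting the bound $2$ on the dimension of isotropic subspaces for the non-degenerate symplectic form $\bar{s}$. Once $V^\bot\subset V_3$, the quotient $V_3/V^\bot\subset\bar{V}$ is a $2$-dimensional $\bar{s}$-isotropic subspace, i.e.\ a $k$-point of the Lagrangian Grassmannian $Q_{\langle\bar{s}\rangle}$, and conversely any such datum produces an $s$-isotropic $V_3$. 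This establishes the bijective correspondence with $Q_{\langle\bar{s}\rangle}(k)$. Finally, the intersection $\sigma_{2,2}(V_3)\cap\Pi_{\langle s\rangle}=\sigma_{2,2}(V_3)\cap\sigma_{3,0}(V^\bot)$ is by definition the $\sigma_{3,1}$-subscheme $\sigma_{3,1}(V^\bot\subset V_3)\cong\mathbb{P}_k((V_3/V^\bot)^\vee)$, which is a projective line since $\dim(V_3/V^\bot)=2$.

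The main (minor) obstacle is the dimension-count step showing $V^\bot\subset V_3$; all other steps are direct linear-algebraic verifications using the isotropy reformulation.
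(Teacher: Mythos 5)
Your proposal is correct and follows essentially the same route as the paper: reduce containment to isotropy of the relevant subspaces, use that smoothness forces $\dim V^{\bot}=1$, and construct the hyperplane section containing a given solid by extending a symplectic form from a complement of $V_{1}$. The only differences are cosmetic — you spell out the dimension count showing $V^{\bot}\subset V_{3}$ that the paper leaves implicit, and your label $\sigma_{3,1}(V^{\bot}\subset V_{3})$ is a slight abuse of the paper's notation (reserved for flags $V_{1}\subset V_{4}$ with $\dim V_{4}=4$), though you correctly identify the intersection as the line $\mathbb{P}_{k}((V_{3}/V^{\bot})^{\vee})$.
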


\begin{proof}
A solid $\sigma_{3,0}(V_{1})$ of $G$ is contained in $Z_{\langle s\rangle}$
if and only if $V=V_{1}^{\bot}$ hence, since by hypothesis $V^{\bot}$
is $1$-dimensional, if and only if $V_{1}=V^{\bot}$. Conversely,
for every $1$-dimensional $k$-vector subspace $V_{1}$ of $V$,
any choice of symplectic form $\bar{s}\in\Lambda^{2}(V/V_{1})^{\vee}$
on the $4$-dimensional $k$-space $V/V_{1}$ determines through the
inclusion $\Lambda^{2}(V/V_{1})^{\vee}\subset\Lambda^{2}V^{\vee}$
a skew-symmetric form $s\in\Lambda^{2}V^{\vee}$ with $\mathrm{Ker}\tilde{s}=V_{1}$,
hence a corresponding smooth hyperplane section $Z_{\langle s\rangle}$
of $G$ containing $\sigma_{3,0}(V_{1})$. A plane $\sigma_{2,2}(V_{3})$
of $G$ is contained in $Z_{\langle s\rangle}$ if and only if every
$2$-dimensional $k$-vector subspace $E\subset V_{3}$ is $s$-isotropic,
which holds if and only if $V_{3}$ is $s$-isotropic. This property
is equivalent to the fact that $V_{3}$ contains $V^{\bot}$ and $\bar{V}_{3}=V_{3}/V^{\bot}$
is an $\bar{s}$-isotropic subspace of $\bar{V}$. Every $\sigma_{2,2}$-plane
with this property intersects $\Pi_{\langle s\rangle}\cong\mathbb{P}_{k}(\bar{V}^{\vee})$
along the line $\mathbb{P}_{k}(\bar{V}_{3}^{\vee})$. 
\end{proof}
\begin{rem}
\label{rem:Normal-sheaf-solid-Z}For a smooth hyperplane section $Z=Z_{\langle s\rangle}$
of $G$, the exact sequence
\[
0\to\mathcal{C}_{Z/G}|_{\Pi}\cong\Lambda^{2}\mathcal{Q}^{\vee}|_{\Pi}\stackrel{\bar{s}}{\to}\mathcal{C}_{\Pi/G}\cong\Omega_{\Pi}^{1}(1)\to\mathcal{C}_{\Pi/Z}\to0
\]
identifies the conormal sheaf $\mathcal{C}_{\Pi/Z}$ of the unique
solid $\Pi=\Pi_{\langle s\rangle}\cong\mathbb{P}_{k}(\bar{V}^{\vee})$
of $Z_{\langle s\rangle}$ with the null-correllation rank $2$ locally
sheaf $\mathcal{N}_{\langle\bar{s}\rangle}$ associated to the symplectic
form $\bar{s}\in\Lambda^{2}\bar{V}^{\vee}$, that is, the sheaf whose
fiber over a closed point $\ell\subset\bar{V}_{\bar{k}}$ of $\mathbb{P}_{\bar{k}}(\bar{V}_{\bar{k}}^{\vee})$
is the quotient $\ell^{\bot}/\ell$, where $\ell^{\bot}$ is the $\bar{s}_{\bar{k}}$-orthogonal
of $\ell$. 
\end{rem}

\subsubsection{\label{subsec:Codim2-Sec}Smooth linear sections of codimension $2$ }

A codimension $2$ linear section of $G$ is the zero scheme $W_{L}$
of a homomorphism $L_{G}\to\Lambda^{2}\mathcal{Q}$ determined by
a $2$-dimensional $k$-vector subspace $L\subset H^{0}(G,\Lambda^{2}\mathcal{Q})=\Lambda^{2}V^{\vee}$,
equivalently the intersection of $G$ in the Pl\"ucker embedding
with the codimension $2$ linear subspace $\mathbb{P}_{k}(\Lambda^{2}V^{\vee}/L)$
of $\mathbb{P}_{k}(\Lambda^{2}V^{\vee})$. A closed point $E\subset V_{\bar{k}}$
of $G_{\bar{k}}$ belongs to $W_{L}$ if and only if $s_{\bar{k}}|_{E}=0$
for every skew-symmetric form $s$ in $L$ and, arguing as in $\S$
\ref{subsec:Hyperplane-Sec}, we see from the conormal sequence
\[
\mathcal{C}_{W_{L}/G}=\Lambda^{2}\mathcal{Q}^{\vee}|_{W_{L}}\otimes L_{W_{L}}\stackrel{d}{\to}\Omega_{G/k}^{1}|_{W_{L}}=\mathcal{H}om(\mathcal{Q}|_{W_{L}},\mathcal{S}|_{W_{L}})\to\Omega_{W_{L}/k}^{1}\to0
\]
that $W_{L,\bar{k}}$ is smooth at a closed point $E\subset V_{\bar{k}}$
if and only if $E\not\subset\mathrm{Ker}\tilde{s}_{\bar{k}}$ for
all $s\in L\setminus\{0\}$. In particular, $W_{L}$ is smooth if
and only if the line $\mathbb{P}_{k}(L^{\vee})\subset\mathbb{P}_{k}(\Lambda^{2}V)$
is contained in $\mathbb{P}_{k}(\Lambda^{2}V)\setminus\mathbb{G}_{k}(V,2)$.
This yields a natural correspondence between smooth codimension $2$
linear sections $W_{L}$ of $G$ and $k$-points of the open subset
of $\mathbb{G}_{k}(\Lambda^{2}V,2)$ parametrizing such lines, which
is a homogeneous space under the action of $\mathrm{PGL}_{5}(k)$. 
\begin{lem}
\label{lem:Unique-plane-W5}A smooth linear section $W_{L}$ of $G$
does not contain any $\sigma_{3,0}$-solid of $G$. It contains a
unique $\sigma_{2,2}$-plane $\Xi_{L}=\sigma_{2,2}(V_{3,L})$, where
$V_{3,L}\subset V$ is the linear span of the kernels of the linear
maps $\tilde{s}$, $s\in L\setminus\{0\}$. 
\end{lem}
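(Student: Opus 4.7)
The plan is to exploit a single mechanism underlying both assertions. By the analysis in \S\ref{subsec:Codim2-Sec}, smoothness of $W_L$ is equivalent to the line $\ell = \mathbb{P}_k(L^\vee) \subset \mathbb{P}_k(\Lambda^2 V)$ containing no decomposable skew form, i.e., every nonzero $s \in L$ has $\tilde s$ of rank exactly $4$. Whenever some analysis forces $L$ into the $6$-dimensional subspace $\Lambda^2(V/V_1)^\vee \subset \Lambda^2 V^\vee$ for some line $V_1 \subset V$, the decomposable elements of $\Lambda^2(V/V_1)^\vee$ form the Klein quadric in $\mathbb{P}_k(\Lambda^2(V/V_1)^\vee) \cong \mathbb{P}_k^5$, and over $\bar k$ any line must meet this quadric by B\'ezout, producing a decomposable element of $L$ in contradiction with smoothness. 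This immediately gives the first claim: if $\sigma_{3,0}(V_1) \subset W_L$, then $s|_E = 0$ for every $2$-dimensional $E \supset V_1$ and every $s \in L$, which as $E$ varies forces $V_1 \subset \ker \tilde s$ for all $s \in L$, so $L \subset \Lambda^2(V/V_1)^\vee$, which is impossible.

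For the second claim, first translate $\sigma_{2,2}(V_3) \subset W_L$ into the intrinsic condition that $V_3$ be $s$-isotropic for every $s \in L$. Since $\tilde s$ has rank $4$, the induced symplectic form on the $4$-dimensional quotient $V/\ker \tilde s$ has maximal isotropic subspaces of dimension $2$; hence a $3$-dimensional $V_3$ can be $s$-isotropic only if $\ker \tilde s \subset V_3$. Thus every $\sigma_{2,2}$-plane in $W_L$ satisfies $V_3 \supset V_{3,L}$, and the lemma reduces to proving both $\dim_k V_{3,L} = 3$ and that $V_{3,L}$ itself is $s$-isotropic for every $s \in L$.

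To pin down the dimension, fix a basis $s_1, s_2$ of $L$ with associated skew matrices $S_1, S_2$ and consider the kernel morphism
\[
\kappa : \ell \cong \mathbb{P}_k^1 \longrightarrow \mathbb{P}_k(V^\vee) \cong \mathbb{P}_k^4, \qquad [\lambda : \mu] \longmapsto [\ker(\lambda \tilde s_1 + \mu \tilde s_2)].
\]
Since the kernel of a rank-$4$ skew $5 \times 5$ matrix is spanned, up to signs, by the Pfaffians of its $4 \times 4$ principal minors, $\kappa$ is given by five quadratic forms in $(\lambda, \mu)$ with no common factor: any common linear factor $\lambda - c\mu$ would yield a nonzero $c s_1 + s_2 \in L$ of rank $\le 2$, violating smoothness. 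Moreover $\kappa$ is injective on $\bar k$-points, for two distinct points of $\ell_{\bar k}$ sharing the same kernel $V_1$ would place $L_{\bar k} \subset \Lambda^2(V_{\bar k}/V_1)^\vee$, falling into the obstruction of the first paragraph. Hence $\kappa$ is birational onto its image, a reduced irreducible degree-$2$ curve in $\mathbb{P}_k^4$, necessarily a smooth conic spanning a $\mathbb{P}_k^2$. This plane is precisely $\mathbb{P}_k(V_{3,L}^\vee)$, giving $\dim_k V_{3,L} = 3$.

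Finally, to check that $V_{3,L}$ is $L$-isotropic, write $u(P) \in V$ for a kernel vector of $\tilde s_P$ as $P \in \ell$; these span $V_{3,L}$. For any two distinct $P, Q \in \ell$, the linear form $L \to k$, $t \mapsto t(u(P), u(Q))$, vanishes at both $s_P$ and $s_Q$ (the former since $u(P) \in \ker \tilde s_P$, the latter by skew-symmetry and $u(Q) \in \ker \tilde s_Q$), hence on all of $L$; the case $P = Q$ is trivial. Thus $\sigma_{2,2}(V_{3,L}) \subset W_L$, and the dimension count forces any other $\sigma_{2,2}$-plane in $W_L$ to coincide with it. The principal obstacle is the analysis of $\kappa$ in the third paragraph, combining the Pfaffian degree bound with the B\'ezout-type injectivity argument to force $\dim V_{3,L} = 3$; the other steps reduce to the isotropy reformulations and to repeated use of the smoothness obstruction.
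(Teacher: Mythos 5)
Your proof is correct, and it diverges from the paper's in an interesting way on the first assertion. For the non-existence of $\sigma_{3,0}$-solids, the paper argues via adjunction: a solid $\Pi\subset W_L$ would have normal sheaf $\mathcal{O}_{\Pi}(-1)$, contradicting the fact that $\mathrm{Pic}(W_L)\cong\mathbb{Z}$ is generated by the degree-$5$ ample class. You instead stay entirely in linear algebra: $\sigma_{3,0}(V_1)\subset W_L$ forces $V_1\subset\mathrm{Ker}\,\tilde{s}$ for every $s\in L$, so $L\subset\Lambda^{2}(V/V_1)^{\vee}$, and then the pencil of skew forms on the $4$-dimensional quotient has a degenerate member over $\bar{k}$ (the Pfaffian is a binary quadratic), contradicting the smoothness criterion of $\S$\ref{subsec:Codim2-Sec}; this is more elementary and does not invoke the structure of $\mathrm{Pic}(W_L)$, at the price of only giving a geometric (after base change to $\bar k$) contradiction, which is all that is needed since the smoothness criterion is geometric. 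For the uniqueness of the $\sigma_{2,2}$-plane, your route is essentially the paper's: the paper sends the pencil $\mathbb{P}_k(L^{\vee})$ into $\mathbb{P}_k(V^{\vee})$ by the system of quadrics through $\mathbb{G}_k(V,2)$, obtaining a smooth conic $C_L$ of kernels whose span is $\mathbb{P}_k(V_{3,L}^{\vee})$, which is exactly your kernel morphism $\kappa$ written out via $4\times4$ Pfaffians (your no-common-factor and injectivity checks are the explicit justification that the image is a conic spanning a plane, which the paper asserts). One genuine added value of your write-up is the explicit verification that $V_{3,L}$ is $s$-isotropic for every $s\in L$ (the functional $t\mapsto t(u(P),u(Q))$ vanishing at $s_P$ and $s_Q$, hence on $L$): the paper's ``if and only if'' reduction to $V_3\supset V_{3,L}$ leaves this existence half implicit, so your argument closes that small gap rather than creating one.
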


\begin{proof}
By adjunction formula, a solid $\Pi$ contained in $W_{L}$ would
have normal sheaf isomorphic to $\mathcal{O}_{\Pi}(-1)$, and hence
$\mathcal{O}_{W_{L}}(\Pi)$ would be an invertible sheaf of degree
$1$ on $W_{L}$. This is impossible since $\mathrm{Pic}(W_{L})\cong\mathbb{Z}$
is generated by an ample invertible sheaf of degree $5$. A plane
$\sigma_{2,2}(V_{3})$ of $G$ is contained in $W_{L}$ if and only
if $V_{3}$ contains the linear span of the kernels of the linear
maps $\tilde{s}$ corresponding to the forms $s\in L\setminus\{0\}$.
The image of $\mathbb{P}_{k}(L^{\vee})\subset\mathbb{P}_{k}(\Lambda^{2}V)\setminus\mathbb{G}_{k}(V,2)$
by the morphism $\mathbb{P}_{k}(\Lambda^{2}V)\setminus\mathbb{G}_{k}(V,2)\to\mathbb{G}_{k}(V,4)$
given by the complete linear system of quadrics containing $\mathbb{G}_{k}(V,2)$
is a smooth conic $C_{L}$ in $\mathbb{G}_{k}(V,4)\cong\mathbb{P}_{k}(V^{\vee})$
whose $k$-points are the kernels $\mathrm{Ker}\tilde{s}\subset V$
of the elements $s\in L\setminus\{0\}$. Letting $V_{3,L}\subset V$
be the unique $3$-dimensional $k$-vector subspace such that $\mathbb{P}_{k}(V_{3,L}^{\vee})$
contains $C_{L}$, we conclude that $\sigma_{2,2}(V_{3,L})$ is the
unique $\sigma_{2,2}$-plane in $W_{L}$. 
\end{proof}
\vspace{-1.0em}

\subsection{\label{subsec:Projection-Sigma3-0}Projections from $\sigma_{3,0}$-solids }

Let $V_{1}\subset V$ be a $1$-dimensional $k$-vector subspace,
let $p:V\to\bar{V}=V/V_{1}$ be the quotient morphism and let $\bar{G}=\mathbb{G}_{k}(\bar{V}^{\vee},2)$
with universal sequence $0\to\mathcal{S}_{\bar{G}}\to\bar{V}_{\bar{G}}^{\vee}\to\mathcal{Q}_{\bar{G}}\to0$.
Let $Z_{\langle s\rangle}\subset G$ be a smooth hyperplane section
determined by the image $s\in\Lambda^{2}V^{\vee}$ of a symplectic
form $\bar{s}\in\Lambda^{2}\bar{V}^{\vee}$ and let $Q_{\langle\bar{s}\rangle}\subset\bar{G}$
be the zero scheme of $\bar{s}$. Let $\Pi=\sigma_{3,0}(V_{1})=G\cap\mathbb{P}_{k}(\Lambda^{2}V^{\vee}/\Lambda^{2}(\bar{V})^{\vee})$
be the solid of $G$ contained in $Z_{\langle s\rangle}$ determined
by $V_{1}$. The \emph{projection of $G$ from the solid $\Pi$ }is
the dominant rational map \begin{equation}\label{eq:Grass-Sigma-3-0-proj}
\pi_{\Pi}: G=\mathbb{G}_{k}(V^{\vee},2) \dashrightarrow \bar{G}=\mathbb{G}_{k}(\bar{V}^{\vee},2)
\end{equation} given by the restriction to $G$ of the linear projection $\mathbb{P}_{k}(\Lambda^{2}V^{\vee})\dashrightarrow\mathbb{P}_{k}(\Lambda^{2}\bar{V}^{\vee})$.
The morphism $\pi_{\Pi}|_{G\setminus\Pi}$ maps a $k$-point $E\subset V$
of $G$ not containing $V_{1}$ to the $k$-point $p(E)$ of $\bar{G}$
and conversely, the closure in $G$ of a fiber of $\pi_{\Pi}$ over
a $k$-point $\bar{E}\subset\bar{V}$ of $\bar{G}$ is the plane $\sigma_{2,2}(p^{-1}(\bar{E}))$
of $G$. 

The restriction $\pi_{\Pi}:Z_{\langle s\rangle}\dashrightarrow Q_{\langle\overline{s}\rangle}$
of $\pi_{\Pi}$ to $Z_{\langle s\rangle}$ is called the \emph{projection
of $Z_{\langle s\rangle}$ from its solid} $\Pi=\Pi_{\langle s\rangle}$.
The restriction $\pi_{\Pi}|_{Z_{\langle s\rangle}\setminus\Pi}$ maps
a $k$-point of $Z_{\langle s\rangle}$ represented by an $s$-isotropic
$k$-point $E\subset V$ of $G$ not containing $V_{1}$ to the $k$-point
of $Q_{\langle\bar{s}\rangle}$ represented by the $\bar{s}$-isotropic
$k$-point $p(E)$ of $\bar{G}$. 

To state the next result, we put $(X_{6},\mathbf{Q}_{4},\mathcal{E}_{6})=(G,\bar{G},\mathcal{Q}_{\bar{G}}^{\vee})$
and $(X_{5},\mathbf{Q}_{3},\mathcal{E}_{5})=(Z_{\langle s\rangle},Q_{\langle\bar{s}\rangle},\mathscr{S})$,
where $\mathscr{S}=\mathcal{Q}_{\bar{G}}^{\vee}|_{Q_{\langle\bar{s}\rangle}}$
is the spinor locally free sheaf of rank $2$ on the quadric threefold
$Q_{\langle\bar{s}\rangle}\subset\bar{G}$, see e.g. \cite{Ot88}. 
\begin{prop}
\label{prop:Projection-Sigma3-0 } For $i=5,6$, let $Y_{\Pi}\subset X_{i}\times\mathbf{Q}_{i-2}$
be the graph of $\pi_{\Pi}$ with projections $\mathrm{p}_{X_{i}}:Y_{\Pi}\to X_{i}$
and let $\mathrm{p}_{\mathbf{Q}_{i-2}}:Y_{\Pi}\to\mathbf{Q}_{i-2}$.
Then we have the following Sarkisov link \begin{equation}\label{eq:Grass-Sigma-3-0-link}
 \xymatrix@C4.5em@R0.95em{ \mathbb{P}_\Pi(\mathcal{C}_{\Pi/X_i}) \cong \mathbb{P}_{\mathbf{Q}_{i-2}}(\mathcal{E}_{i}) \ar[d] \ar@{^{(}->}[rr] & & Y_{\Pi} \ar[dl]_{\mathrm{p}_{X_i}} \ar[dr]^{\mathrm{p}_{\mathbf{Q}_{i-2}}} \ar[r]^-{\cong} &  \mathbb{P}_{\mathbf{Q}_{i-2}}(\mathcal{E}_i\oplus \mathcal{O}_{\mathbf{Q}_{i-2}})\ \ar[d] \\ \Pi \ar@{^{(}->}[r] &   X_i \ar@{-->}[rr]^-{\pi_{\Pi}} &  & \mathbf{Q}_{i-2}}
\end{equation}  where $\mathrm{p}_{X_{i}}:Y_{\Pi}\to X_{i}$ is the contraction of
the sub-bundle $\mathbb{P}_{\mathbf{Q}_{i-2}}(\mathcal{E}_{i})\subset\mathbb{P}_{\mathbf{Q}_{i-2}}(\mathcal{E}_{i}\oplus\mathcal{O}_{\mathbf{Q}_{i-2}})$
onto $\Pi$. 
\end{prop}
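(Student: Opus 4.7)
The strategy is to describe the graph $Y_\Pi$ intrinsically as a moduli-theoretic incidence variety and then read off its two projections separately. Since $\pi_\Pi$ sends a $2$-plane $E\subset V$ not containing $V_1$ (resp.\ $V^\bot$) to $p(E)\subset \bar V$, the graph $Y_\Pi$ coincides with the scheme-theoretic incidence locus
\[ Y_\Pi \;=\;\bigl\{(E,\bar F)\in X_i\times \mathbf{Q}_{i-2}\ :\ p(E)\subset \bar F\bigr\}. \]
Over $X_i\setminus\Pi$ this forces $p(E)=\bar F$, so $\mathrm{p}_{X_i}$ is an isomorphism there; over $\Pi$ it only requires $\bar F$ to contain the line $\bar E=E/V_1$ (respectively $E/V^\bot$), producing exceptional fibers of the expected codimension.

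To handle the projection to $\mathbf{Q}_{i-2}$, I would introduce on $\bar G$ the rank-$3$ locally free sheaf $\mathcal R$ obtained as the preimage of $\mathcal{E}_6=\mathcal Q_{\bar G}^\vee\subset \bar V_{\bar G}$ under $p\colon V_{\bar G}\to\bar V_{\bar G}$; its fiber at $\bar F$ is precisely $p^{-1}(\bar F)$, and it fits in a short exact sequence $0\to V_{1,\bar G}\to \mathcal R\to \mathcal{E}_6\to 0$. This sequence is pulled back from the split sequence $0\to V_1\to V\to\bar V\to 0$ of constant sheaves --- equivalently, its extension class lies in $H^1(\bar G,\mathcal Q_{\bar G})=0$ --- so $\mathcal R\cong \mathcal O_{\bar G}\oplus \mathcal{E}_6$, and restricting to $Q_{\langle\bar s\rangle}$ yields $\mathcal R|_{Q_{\langle\bar s\rangle}}\cong \mathcal O\oplus\mathcal{E}_5$. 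For $i=5$, the inclusion $V^\bot\subset p^{-1}(\bar F)$ together with the $\bar s$-isotropy of $\bar F$ forces $s|_{p^{-1}(\bar F)}=0$, so every $2$-plane inside $p^{-1}(\bar F)$ is automatically $s$-isotropic. In both cases the fiber of $\mathrm{p}_{\mathbf{Q}_{i-2}}$ over $\bar F$ is therefore the Grassmannian of $2$-planes in $p^{-1}(\bar F)$, and globalizing this gives
\[ Y_\Pi \;\cong\; \mathbb{G}_{\mathbf{Q}_{i-2}}\!\bigl(\mathcal R^\vee|_{\mathbf{Q}_{i-2}},1\bigr) \;\cong\; \mathbb{P}_{\mathbf{Q}_{i-2}}\!\bigl(\mathcal R|_{\mathbf{Q}_{i-2}}\bigr) \;\cong\; \mathbb{P}_{\mathbf{Q}_{i-2}}\!\bigl(\mathcal{E}_i\oplus\mathcal O_{\mathbf{Q}_{i-2}}\bigr). \]

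To handle the projection to $X_i$, I would note that $\mathrm{p}_{X_i}^{-1}(\Pi)$ is the locus $V_1\subset E$ (resp.\ $V^\bot\subset E$), which under the identification above corresponds to the rank-$2$ subsheaves $E\subset \mathcal R|_{\mathbf{Q}_{i-2}}$ containing the first summand $\mathcal O$. This is precisely the sub-bundle $\mathbb{P}_{\mathbf{Q}_{i-2}}(\mathcal{E}_i)\hookrightarrow\mathbb{P}_{\mathbf{Q}_{i-2}}(\mathcal{E}_i\oplus\mathcal O_{\mathbf{Q}_{i-2}})$, cut out by the canonical section of $\mathcal O(1)$ induced by $\mathcal O\hookrightarrow\mathcal{E}_i\oplus\mathcal O$; in particular it is a Cartier divisor on $Y_\Pi$. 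The universal property of blow-ups then provides a factorization $\mathrm{p}_{X_i}\colon Y_\Pi\to\mathrm{Bl}_\Pi X_i\to X_i$, both target and source being smooth with exceptional loci that are $\mathbb{P}^{c-1}$-bundles over $\Pi$ for $c=\mathrm{codim}_{X_i}(\Pi)\in\{3,2\}$; a fiber-dimension argument then forces the first map to be an isomorphism. This yields the identification $\mathbb{P}_\Pi(\mathcal C_{\Pi/X_i})\cong\mathbb{P}_{\mathbf{Q}_{i-2}}(\mathcal{E}_i)$ compatibly with both projections onto $\Pi$ and $\mathbf{Q}_{i-2}$, which can be cross-checked for $i=6$ from the formula $\mathcal C_{\Pi/G}\cong\Omega_\Pi^1(1)\otimes V_{1,\Pi}$ and for $i=5$ via Remark \ref{rem:Normal-sheaf-solid-Z}.

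The main obstacle will lie in the third step: upgrading the naive set-theoretic incidence description to an honest scheme-theoretic identification, so that $\mathrm{p}_{X_i}^{-1}(\Pi)$ is verifiably reduced and Cartier on $Y_\Pi$, and so that the resulting factorization through $\mathrm{Bl}_\Pi X_i$ is an isomorphism rather than an additional modification. For $i=5$ this is further complicated by having to carry the $s$-isotropy condition through the sheaf-theoretic construction of $\mathcal R$ and to match the spinor structure on $\mathcal{E}_5=\mathscr S$ with the null-correlation presentation of $\mathcal C_{\Pi/Z}$.
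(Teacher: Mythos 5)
Your proposal is correct and takes essentially the same route as the paper's proof: your sheaf $\mathcal{R}$ is exactly the dual of the rank-$3$ cokernel sheaf used there, split by the same vanishing $H^{1}(\bar{G},\mathcal{Q}_{\bar{G}})=0$, your identification of $\mathrm{p}_{X_i}^{-1}(\Pi)$ with the sub-bundle $\mathbb{P}_{\mathbf{Q}_{i-2}}(\mathcal{E}_{i})$ is the paper's contraction statement, and your isotropy argument for $p^{-1}(\bar{F})$ is precisely what justifies the paper's ``follows by restriction'' in the case $i=5$. One cosmetic slip: $2$-planes in the fibers of $\mathcal{R}$ are parametrized by $\mathbb{G}_{\mathbf{Q}_{i-2}}(\mathcal{R},1)=\mathbb{P}_{\mathbf{Q}_{i-2}}(\mathcal{R})\cong\mathbb{G}_{\mathbf{Q}_{i-2}}(\mathcal{R}^{\vee},2)$, not $\mathbb{G}_{\mathbf{Q}_{i-2}}(\mathcal{R}^{\vee},1)$, though the endpoint $\mathbb{P}_{\mathbf{Q}_{i-2}}(\mathcal{E}_{i}\oplus\mathcal{O}_{\mathbf{Q}_{i-2}})$ is unaffected.
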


\begin{proof}
The projection $\mathrm{p}_{\bar{G}}:Y_{\Pi}\to\bar{G}$ is isomorphic
to the projective bundle $\mathbb{G}_{\bar{G}}(\mathcal{F},2)\cong\mathbb{P}_{\bar{G}}(\mathcal{F}^{\vee})\to\bar{G}$,
where $\mathcal{F}$ is the cokernel of the injective homomorphism
$\mathcal{S}_{\bar{G}}\to\bar{V}_{\bar{G}}^{\vee}\to V_{\bar{G}}^{\vee}$.
The latter is locally free of rank $3$, isomorphic to an extension
of $V_{1,\bar{G}}^{\vee}$ by $\mathcal{Q}_{\bar{G}}$, hence to $\mathcal{Q}_{\bar{G}}\oplus\mathcal{O}_{\bar{G}}$
due to the vanishing of $H^{1}(\bar{G},\mathcal{Q}_{\bar{G}})$, and
the projection $\mathrm{p}_{G}:Y_{\Pi}\to G$ contracts the projective
sub-bundle $\mathbb{P}_{\bar{G}}(\mathcal{Q^{\vee}}_{\bar{G}})\subset\mathbb{P}_{\bar{G}}(\mathcal{Q^{\vee}}_{\bar{G}}\oplus\mathcal{O}_{\bar{G}})$
to $\Pi$. This identifies in particular $\mathbb{P}_{\bar{G}}(\mathcal{Q}_{\bar{G}}^{\vee})$
with the exceptional divisor $\mathbb{P}_{\Pi}(\mathcal{C}_{\Pi/G})$
of the blow-up of $\Pi$ in $G$. The corresponding diagram for the
smooth hyperplane section $Z_{\langle s\rangle}$ follows immediately
by restriction. 
\end{proof}
\begin{example}
\label{exa:Projection-sigma30} With Notation \ref{nota:Plucker-Coord},
the kernel $V^{\bot}$ of the linear map $\tilde{s}$ associated to
the skew-symmetric form $s=e_{1}^{\vee}\wedge e_{3}^{\vee}-e_{2}^{\vee}\wedge e_{4}^{\vee}$
equals $\langle e_{5}\rangle$. The associated hyperplane section
$Z_{\langle s\rangle}=G\cap\{w_{13}-w_{24}=0\}$ is the smooth fivefold
in $\mathbb{P}_{k}^{8}\subset\mathbb{P}_{k}^{9}$ with coordinates
$w_{ij}$, $(i,j)\neq(2,4)$, defined by the equations \[\left\{\begin{array}{r} w_{12}w_{34}-w_{13}^2+w_{14}w_{23}=0\\ w_{12}w_{35}-w_{13}w_{25}+w_{15}w_{23}=0\\ w_{12}w_{45}-w_{14}w_{25}+w_{13}w_{15}=0\\ w_{13}w_{45}-w_{14}w_{35}+w_{15}w_{34}=0\\ w_{23}w_{45}-w_{13}w_{35}+w_{25}w_{34}=0 \end{array} \right.\]
Putting $\bar{V}=V/\langle e_{5}\rangle$, the image of $G$ by the
projection $$\mathbb{P}_{k}(\Lambda^{2}V^{\vee})=\mathbb{P}_{k}^{9}\dashrightarrow\mathbb{P}_{k}^{5}=\mathbb{P}_{k}(\Lambda^{2}\bar{V}^{\vee}),\,[w_{ij}]_{1\leq i<j\leq5}\mapsto[w_{12}:w_{13}:w_{14}:w_{23}:w_{24}:w_{34}]$$
from the solid $\Pi=\sigma_{3,0}(V^{\bot})=\{w_{12}=w_{13}=w_{14}=w_{23}=w_{24}=w_{34}=0\}$
is the smooth quadric fourfold $\bar{G}=\mathbb{G}_{k}(\bar{V}^{\vee},2)=\{\overline{w}_{12}\overline{w}_{34}-\overline{w}_{13}\overline{w}_{24}+\overline{w}_{14}\overline{w}_{23}=0\}$
in $\mathbb{P}_{k}^{5}$ with Pl\"ucker coordinates $\bar{w}_{ij}=\bar{e}_{i}^{\vee}\wedge\bar{e}_{j}^{\vee}$,
$1\leq i<j\leq4$, where $\bar{e}_{i}$ denotes the image of $e_{i}$
in $\bar{V}$. The image of $Z_{\langle s\rangle}$ by this projection
is the smooth quadric threefold $Q_{\langle\bar{s}\rangle}=\{\overline{w}_{12}\overline{w}_{34}-\overline{w}_{13}^{2}+\overline{w}_{14}\overline{w}_{23}=0\}$
in $\mathbb{P}_{k}^{4}\subset\mathbb{P}_{k}^{5}$ with coordinates
$\bar{w}_{ij}$, $(i,j)\neq(2,4)$. 
\end{example}

With the notation above, let $\mathrm{Sp}_{k}(\bar{V}^{\vee},\bar{s})$
be the symplectic group of the symplectic form $\bar{s}\in\Lambda^{2}\bar{V}^{\vee}$
and let $\mathrm{PSp}{}_{k}(\bar{V}^{\vee},\bar{s})$ be its image
in $\mathrm{PGL}_{k}(\bar{V}^{\vee})$. We infer the following description: 
\begin{cor}
\label{cor:Aut-Z5}There exists a split exact sequence of $k$-group
schemes
\[
0\to\mathrm{Aut}_{k}(Z_{\langle s\rangle},\Pi_{\langle s\rangle})_{0}\cong\mathbb{G}_{a,k}^{4}\rtimes\mathbb{G}_{m,k}\to\mathrm{Aut}_{k}(Z_{\langle s\rangle},\Pi_{\langle s\rangle})=\mathrm{Aut}_{k}(Z_{\langle s\rangle})\to\mathrm{Aut}_{k}(Q_{\langle s\rangle})\cong\mathrm{PSp}_{k}(\bar{V}^{\vee},\bar{s})\to0,
\]
where $\mathrm{Aut}_{k}(Z_{\langle s\rangle},\Pi_{\langle s\rangle})_{0}$
is the kernel of the restriction homomorphism $\mathrm{Aut}_{k}(Z_{\langle s\rangle},\Pi_{\langle s\rangle})\to\mathrm{Aut}_{k}(\Pi_{\langle s\rangle})$.
Moreover, up to the choice of a splitting $V\cong\bar{V}\oplus V^{\bot}$,
$\mathrm{Aut}_{k}(Z_{\langle s\rangle})$ is the image under the restriction
homomorphism $\mathrm{Aut}_{k}(G,Z_{\langle s\rangle})\to\mathrm{Aut}_{k}(Z_{\langle s\rangle})$
of the subgroup 
\[
\left\{ \left(\begin{array}{cc}
\mathrm{Sp}_{k}(\bar{V}^{\vee},\bar{s}) & \mathrm{Hom}_{k}((V^{\bot})^{\vee},\bar{V}^{\vee})\\
0 & \mathbb{G}_{m,k}
\end{array}\right)\right\} /\{\pm\mathrm{Id}\}
\]
 of $\mathrm{PGL}_{k}(V^{\vee})$ under the isomorphism $\Phi:\mathrm{PGL}_{k}(V^{\vee})\to\mathrm{Aut}_{k}(G)$
of (\ref{eq:Aut-Plucker}). 
\end{cor}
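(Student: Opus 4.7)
My strategy is to exploit the Sarkisov link of Proposition~\ref{prop:Projection-Sigma3-0 } to describe $\mathrm{Aut}_k(Z_{\langle s\rangle})$ as an extension of $\mathrm{Aut}_k(Q_{\langle\bar s\rangle})$ by the group of relative automorphisms of $Y_\Pi\cong\mathbb{P}_{Q_{\langle\bar s\rangle}}(\mathscr{S}\oplus\mathcal{O}_{Q_{\langle\bar s\rangle}})$ over $Q_{\langle\bar s\rangle}$ preserving the exceptional divisor $\mathbb{P}_{Q_{\langle\bar s\rangle}}(\mathscr{S})$ of $\mathrm{p}_Z$. By Lemma~\ref{lem:Solids-and-planes-Z5}(1), $Z_{\langle s\rangle}$ contains a unique $\sigma_{3,0}$-solid, namely $\Pi:=\Pi_{\langle s\rangle}$, so every $k$-automorphism of $Z_{\langle s\rangle}$ preserves $\Pi$; this yields the identification $\mathrm{Aut}_k(Z_{\langle s\rangle},\Pi)=\mathrm{Aut}_k(Z_{\langle s\rangle})$. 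Each such automorphism lifts uniquely to the blow-up $Y_\Pi\to Z_{\langle s\rangle}$ of $\Pi$ and descends via the second contraction $\mathrm{p}_Q$ to a $k$-automorphism of $Q_{\langle\bar s\rangle}$, defining a homomorphism $\rho:\mathrm{Aut}_k(Z_{\langle s\rangle})\to\mathrm{Aut}_k(Q_{\langle\bar s\rangle})$.

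To identify $\ker\rho$, I observe that any such element induces an automorphism of the $\mathbb{P}^1$-bundle $Y_\Pi$ over $Q_{\langle\bar s\rangle}$ preserving the subbundle $\mathbb{P}_{Q_{\langle\bar s\rangle}}(\mathscr{S})$, which arises from a block-triangular automorphism of the sheaf $\mathscr{S}\oplus\mathcal{O}_{Q_{\langle\bar s\rangle}}$ stabilizing the factor $\mathcal{O}_{Q_{\langle\bar s\rangle}}$, modulo global scalar homotheties. Using that the spinor bundle $\mathscr{S}$ on the quadric threefold $Q_{\langle\bar s\rangle}$ is stable, hence simple with $\mathrm{End}(\mathscr{S})=k$, and that $H^0(Q_{\langle\bar s\rangle},\mathscr{S})\cong\bar V$ is $4$-dimensional (obtained by restricting the tautological exact sequence on $\bar G$ to $Q_{\langle\bar s\rangle}$ and applying Bott vanishing on $\bar G$), I get $\ker\rho\cong\mathbb{G}_{a,k}^4\rtimes\mathbb{G}_{m,k}$, with $\mathbb{G}_{m,k}$ acting on $\mathbb{G}_{a,k}^4$ by scaling. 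Because the block on $\mathscr{S}$ is a scalar, elements of $\ker\rho$ act trivially on $\mathbb{P}_{Q_{\langle\bar s\rangle}}(\mathscr{S})$ and hence on its image $\Pi$ under $\mathrm{p}_Z$, yielding $\ker\rho\subseteq\mathrm{Aut}_k(Z_{\langle s\rangle},\Pi)_0$; the reverse inclusion follows from the simplicity of the null-correlation bundle $\mathcal{N}_{\langle\bar s\rangle}\cong\mathcal{C}_{\Pi/Z}$ on $\Pi\cong\mathbb{P}_k^3$ (Remark~\ref{rem:Normal-sheaf-solid-Z}): an automorphism of $Z_{\langle s\rangle}$ trivial on $\Pi$ acts by a scalar on $\mathcal{C}_{\Pi/Z}$, hence trivially on $\mathbb{P}_\Pi(\mathcal{C}_{\Pi/Z})=\mathbb{P}_{Q_{\langle\bar s\rangle}}(\mathscr{S})$ and a fortiori on $Q_{\langle\bar s\rangle}$.

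For the surjectivity and splitting, a choice of decomposition $V\cong\bar V\oplus V^\bot$ extends each element of $\mathrm{Sp}_k(\bar V^\vee,\bar s)$ to a linear automorphism of $V^\vee$ acting as the identity on $(V^\bot)^\vee$; such extensions preserve the form $s$ coming from $\bar s$ and, via the canonical isomorphism $\Phi:\mathrm{PGL}_k(V^\vee)\cong\mathrm{Aut}_k(G)$ of~(\ref{eq:Aut-Plucker}), yield automorphisms of $Z_{\langle s\rangle}$ inducing the standard action of $\mathrm{PSp}_k(\bar V^\vee,\bar s)$ on the ambient space $\mathbb{P}_k(\Lambda^2\bar V^\vee/\langle\bar s\rangle)$ of $Q_{\langle\bar s\rangle}$ from \S\ref{subsec:Hyperplane-Sec}. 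Using the exceptional isomorphism $\mathrm{PSp}_4\simeq\mathrm{PSO}_5$ identifying $\mathrm{PSp}_k(\bar V^\vee,\bar s)$ with the automorphism group of the quadric threefold, this provides both the identification $\mathrm{Aut}_k(Q_{\langle\bar s\rangle})\cong\mathrm{PSp}_k(\bar V^\vee,\bar s)$ and a section of $\rho$. Combined with the description of $\ker\rho$ above, this identifies $\mathrm{Aut}_k(Z_{\langle s\rangle})$ with the image in $\mathrm{PGL}_k(V^\vee)$ of the stated block-triangular subgroup of $\mathrm{GL}_k(V^\vee)$; the $\{\pm\mathrm{Id}\}$ quotient is precisely the intersection of this subgroup with the center of $\mathrm{GL}_k(V^\vee)$, since $\mathrm{Sp}_k(\bar V^\vee,\bar s)$ has center $\{\pm\mathrm{Id}\}$.

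\textbf{Main obstacle.} The technical core is the computation $\ker\rho\cong\mathbb{G}_{a,k}^4\rtimes\mathbb{G}_{m,k}$, which rests on (i) simplicity and the value of $h^0$ for the spinor bundle $\mathscr{S}$ on $Q_{\langle\bar s\rangle}$, and (ii) simplicity of the null-correlation bundle on $\Pi\cong\mathbb{P}_k^3$ to match $\ker\rho$ with $\mathrm{Aut}_k(Z_{\langle s\rangle},\Pi)_0$. These are classical facts but need to be tracked carefully through the two contractions of the Sarkisov link; a secondary subtlety is extracting the precise $\{\pm\mathrm{Id}\}$ quotient through the composite $\mathrm{GL}_k(V^\vee)\to\mathrm{PGL}_k(V^\vee)\to\mathrm{Aut}_k(G)\to\mathrm{Aut}_k(Z_{\langle s\rangle})$, which requires observing that the scalars arising in $\mathrm{GSp}_k(\bar V^\vee,\bar s)/\mathrm{Sp}_k(\bar V^\vee,\bar s)$ are absorbed into the dilation block acting on $(V^\bot)^\vee$.
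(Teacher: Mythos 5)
Your proposal is correct in outline and shares the paper's skeleton (uniqueness of the solid $\Pi_{\langle s\rangle}$ gives $\mathrm{Aut}_k(Z_{\langle s\rangle})=\mathrm{Aut}_k(Z_{\langle s\rangle},\Pi_{\langle s\rangle})$; lift to the blow-up and descend to $Q_{\langle\bar s\rangle}$; build the section from $\mathrm{Sp}_k(\bar V^\vee,\bar s)$ via a splitting $V\cong\bar V\oplus V^\bot$ and the $C_2=B_2$ identification $\mathrm{PSp}_4\cong\mathrm{SO}_5$ acting on $\Lambda^2\bar V^\vee/\langle\bar s\rangle$), but your treatment of the kernel is genuinely different. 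The paper identifies $\mathrm{Aut}_k(Z,\Pi)_0$ with $\ker\bigl(\mathrm{Aut}_k(Z,\Pi)\to\mathrm{Aut}_k(Q_{\langle\bar s\rangle})\bigr)$ by the elementary geometric observation that the fibre planes of $\pi_\Pi$ over $k$-points of $Q_{\langle\bar s\rangle}$ meet $\Pi$ along (pairwise distinct, covering) lines, and then checks by direct matrix computation that the explicit subgroup $\mathrm{P}\Delta_{V^\bot,0}$ of block matrices $M(\mathrm{id}_4,\lambda,U)$ maps isomorphically onto it; you instead compute the kernel intrinsically as the group of relative automorphisms of $\mathbb{P}_{Q_{\langle\bar s\rangle}}(\mathscr{S}\oplus\mathcal{O})$ preserving the exceptional divisor, using simplicity of the spinor bundle, and use simplicity of the null-correlation bundle $\mathcal{C}_{\Pi/Z}$ for the inclusion $\mathrm{Aut}_k(Z,\Pi)_0\subseteq\ker\rho$. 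Your route buys a conceptual justification of the step the paper leaves as a ``direct verification'' (that the kernel is exactly $\mathbb{G}_{a,k}^4\rtimes\mathbb{G}_{m,k}$ and nothing bigger), at the price of invoking stability of two bundles and, to be complete, the remark that $\mathrm{Pic}(Q_{\langle\bar s\rangle})\cong\mathbb{Z}$ rules out twisted sheaf automorphisms, and the closed-image/dimension argument matching the block subgroup with $\ker\rho$; the paper's line-complex argument is more elementary. One correction: with the paper's normalization $\mathscr{S}=\mathcal{Q}_{\bar G}^\vee|_{Q_{\langle\bar s\rangle}}$ one has $H^0(Q_{\langle\bar s\rangle},\mathscr{S})=\mathrm{Hom}(\mathcal{O},\mathscr{S})=0$, and the $4$-dimensional space you need is $\mathrm{Hom}(\mathscr{S},\mathcal{O})=H^0(Q_{\langle\bar s\rangle},\mathscr{S}^\vee)\cong\bar V^\vee$; it is precisely the vanishing of $\mathrm{Hom}(\mathcal{O},\mathscr{S})$ that makes every relative automorphism automatically triangular (hence preserving $\mathbb{P}_{Q_{\langle\bar s\rangle}}(\mathscr{S})$) and scalar on the quotient $\mathscr{S}$, so trivial on $\Pi$. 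With these two Hom-spaces swapped back to their correct places your computation gives exactly $\mathbb{G}_{a,k}^4\rtimes\mathbb{G}_{m,k}$ and the argument goes through.
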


\begin{proof}
Since, by Lemma \ref{lem:Solids-and-planes-Z5}, $\Pi=\Pi_{\langle s\rangle}$
is the unique solid contained in $Z=Z_{\langle s\rangle}$, we have
$\mathrm{Aut}_{k}(Z)=\mathrm{Aut}_{k}(Z,\Pi)$. The action of $\mathrm{Aut}_{k}(Z,\Pi)$
lifts to the blow-up $\mathrm{p}_{Z}:Y\to Z$ of $\Pi$ and since
the fibers of the projection $\pi_{\Pi}:Z\dashrightarrow Q_{\langle\bar{s}\rangle}$
of $Z$ from $\Pi$ over $k$-points of $Q_{\langle\bar{s}\rangle}$
meet $\Pi$ along lines, $\mathrm{Aut}_{k}(Z,\Pi)_{0}$ equals the
kernel of the homomorphism $\mathrm{Aut}_{k}(Z,\Pi)\to\mathrm{Aut}_{k}(Q_{\langle\bar{s}\rangle})$
induced by $\mathrm{p}_{Q_{\langle\bar{s}\rangle}}:Y\to Q_{\langle\bar{s}\rangle}$.
Let $p:\mathrm{Aut}_{k}(Z,\Pi)\to B:=\mathrm{Aut}_{k}(Z,\Pi)/\mathrm{Aut}_{k}(Z,\Pi)_{0}$
be the quotient morphism and let $\gamma:B\to\mathrm{Aut}_{k}(Q_{\langle\bar{s}\rangle})$
be the induced injective homomorphism. Let $\Delta_{V^{\bot}}\subset\mathrm{GL}_{k}(V^{\vee})$
be the stabilizer of $\bar{V}^{\vee}\subset V^{\vee}$, see Notation
\ref{nota:Stabilizers-Schubert}. Let $\Delta_{V^{\bot},0}\cong\mathbb{G}_{a,k}^{4}\rtimes\mathbb{G}_{m,k}$
be its normal subgroup consisting of matrices $M(\mathrm{id_{4}},\lambda,U)$
and let $S_{V^{\bot}}$ be its subgroup consisting of matrices $M(A_{4},\pm1,0)$
with $A_{4}\in\mathrm{Sp}_{k}(\bar{V}^{\vee},\bar{s})$. The image
of $\mathrm{P}\Delta_{V^{\bot}}\subset\mathrm{PGL}_{k}(V^{\vee})$
by $\Phi:\mathrm{PGL}_{k}(V^{\vee})\to\mathrm{Aut}_{k}(G)$ is contained
in the stabilizer $\mathrm{Aut}_{k}(G,(Z,\Pi))$ of the pair $(Z,\Pi)$.
A direct verification shows that the homomorphism $j:\mathrm{P}\Delta_{V^{\bot}}\to\mathrm{Aut}_{k}(Z)$
obtained by composing with the restriction homomorphism $\mathrm{Aut}_{k}(G,Z)\to\mathrm{Aut}_{k}(Z)$
is injective and maps $\mathrm{P}\Delta_{V^{\bot},0}\cong\Delta_{V^{\bot},0}$
isomorphically onto $\mathrm{Aut}_{k}(Z,\Pi)_{0}$. Letting $\bar{q}\in\mathrm{Sym}^{2}(\Lambda^{2}\bar{V}^{\vee}/\langle\bar{s}\rangle)$
be the quadratic form associated to the symplectic form $\bar{s}$
(see $\S$ \ref{subsec:Hyperplane-Sec}), the conclusion then follows
from the fact that the restriction of the composition \[\mathrm{P}\Delta_{V^{\bot}}\stackrel{j}{\to}\mathrm{Aut}_{k}(Z,\Pi)\stackrel{p}{\to}B\stackrel{\gamma}{\to}  \mathrm{Aut}_{k}(Q_{\langle \bar{s}\rangle})=\mathrm{PO}_{k}(\Lambda^{2}\bar{V}^{\vee}/\langle s\rangle,\bar{q})=\mathrm{SO}_{k}(\Lambda^{2}\bar{V}^{\vee}/\langle s\rangle,\bar{q})\]to
the subgroup $\mathrm{P}S_{V^{\bot}}\cong\mathrm{PSp}_{k}(\bar{V}^{\vee},\bar{s})$
is an isomorphism onto its image $\mathrm{SO}_{k}(\Lambda^{2}\bar{V}^{\vee}/\langle s\rangle,\bar{q})$. 
\end{proof}

\subsection{\label{subsec:Projection-Sigma2-2}Projections from $\sigma_{2,2}$-planes }

Let $V_{3}\subset V$ be a $3$-dimensional $k$-vector subspace,
let $K=\Lambda^{2}V/\Lambda^{2}V_{3}$ and let $\langle s\rangle\subset K^{\vee}$
and $L\subset K^{\vee}$ be respectively a $1$-dimensional and a
$2$-dimensional linear subspace of skew-symmetric bilinear forms
on $V$ whose non-zero elements all have maximal rank. These data
determine respectively a plane $\Xi=\sigma_{2,2}(V_{3})=G\cap\mathbb{P}_{k}(\Lambda^{2}V_{3}^{\vee})$
of $G$, a smooth hyperplane section $Z_{\langle s\rangle}$ of $G$
and a smooth codimension $2$ linear section $W_{L}$ of $G$ which
both contain $\Xi$. 

$\bullet$ The \emph{projection of $G$ from the plane $\Xi$ }is
the birational map \begin{equation}\label{eq:Grass-Sigma-2-2-proj}
\pi_{\Xi}: G=\mathbb{G}_{k}(V^{\vee},2) \dashrightarrow \mathbb{P}_{k}(K^{\vee})
\end{equation} induced by the restriction to $G$ of the linear projection $\mathbb{P}_{k}(\Lambda^{2}V^{\vee})\dashrightarrow\mathbb{P}_{k}(K^{\vee})$.
The morphism $\pi_{\Xi}|_{G\setminus\Xi}$ maps a $k$-point $E\subset V$
of $G$ not contained in $V_{3}$ to the image of $\Lambda^{2}E\subset\Lambda^{2}V$
by the quotient homomorphism $\Lambda^{2}V\to K$. Let $Z_{\Xi}=G\cap\mathbb{P}_{k}(\Lambda^{2}V^{\vee}/\Lambda^{2}(V/V_{3})^{\vee})$
and $H_{G}=$$\mathbb{P}_{k}(K^{\vee}/\Lambda^{2}(V/V_{3})^{\vee})$
be the hyperplane sections of $G$ and $\mathbb{P}_{k}(K^{\vee})$
determined by the $1$-dimensional $k$-vector subspace $\Lambda^{2}(V/V_{3})^{\vee}$
of $K^{\vee}\subset\Lambda^{2}V^{\vee}$. Then the image $S_{G}$
of the rational map $\pi_{\Xi}|_{Z_{\Xi}}:Z_{\Xi}\dashrightarrow H_{G}$
equals that of the Segre embedding \[s_{1,1}:\mathbb{P}_{k}(V_{3}^{\vee})\times\mathbb{P}_{k}((V/V_{3})^{\vee})\hookrightarrow\mathbb{P}_{k}(V_{3}^{\vee}\otimes_{k}(V/V_{3})^{\vee})\cong\mathbb{P}_{k}(K^{\vee}/\Lambda^{2}(V/V_{3})^{\vee}).\]
A $k$-point $E\subset V$ of $G$ belongs to $Z_{\Xi}$ if and only
if $E\cap V_{3}\neq\{0\}$, and the closure in $G$ of the fiber of
$\pi_{\Xi}|_{Z_{\Xi}\setminus\Xi}$ over the image by $s_{1,1}$ of
a $k$-point $(V_{1}\subset V_{3},V_{4}/V_{3}\subset V/V_{3})$ of
$\mathbb{P}_{k}(V_{3}^{\vee})\times\mathbb{P}_{k}((V/V_{3})^{\vee})$
is the $\sigma_{3,1}$-plane $\sigma_{3,1}(V_{1}\subset V_{4})$. 

$\bullet$ Since $\langle s\rangle\subset K^{\vee}$, the projection
of $G$ from $\Xi$ restricts on $Z_{\langle s\rangle}$ to the birational
map \begin{equation}\label{eq:Grass-Sigma-2-2-proj}
\pi_{\Xi}: Z_{\langle s\rangle} \dashrightarrow \mathbb{P}_{k}(K^{\vee}/\langle s\rangle) 
\end{equation} defined by the complete linear system of hyperplane sections of $Z_{\langle s\rangle}$
containing $\Xi$, called the \emph{projection of $Z_{\langle s\rangle}$
from the plane $\Xi$.} Letting $H_{Z_{\langle s\rangle}}=H_{G}\cap\mathbb{P}_{k}(K^{\vee}/\langle s\rangle)$,
the image of the induced rational map $\pi_{\Xi}|_{Z_{\Xi}\cap Z_{\langle s\rangle}}:Z_{\Xi}\cap Z_{\langle s\rangle}\dashrightarrow H_{Z_{\langle s\rangle}}$
is the smooth cubic scroll $S_{Z_{\langle s\rangle}}=S_{G}\cap\mathbb{P}_{k}(K^{\vee}/\langle s\rangle)$
in $\mathbb{P}_{k}(K^{\vee}/\langle s\rangle)$. 

$\bullet$ Since $L\subset K^{\vee}$, the projection of $G$ from
$\Xi$ restricts on $W_{L}$ to the birational map \begin{equation}\label{eq:Grass-Sigma-2-2-proj}
\pi_{\Xi}: W_{L} \dashrightarrow \mathbb{P}_{k}(K^{\vee}/L)
\end{equation} defined by the complete linear system of hyperplane sections of $W_{L}$
containing $\Xi$, called the \emph{projection of $W_{L}$ from the
plane $\Xi$.} Putting $H_{W_{L}}=H_{G}\cap\mathbb{P}_{k}(K^{\vee}/L)$,
the image of the induced rational map $\pi_{\Xi}|_{Z_{\Xi}\cap W_{L}}:Z_{\Xi}\cap W_{L}\dashrightarrow H_{W_{L}}$is
the smooth rational cubic curve $S_{W_{L}}=S_{G}\cap\mathbb{P}_{k}(K^{\vee}/L)$
in $\mathbb{P}_{k}(K^{\vee}/L)$. 

To state the next result, we put $(X_{6},\mathbf{P}_{6})=(G,\mathbb{P}_{k}(K^{\vee}))$,
$(X_{5},\mathbf{P}_{5})=(Z_{\langle s\rangle},\mathbb{P}_{k}(K^{\vee}/\langle s\rangle))$
and $(X_{4},\mathbf{P}_{4})=(W_{L},\mathbb{P}_{k}(K^{\vee}/L))$.
We denote by $Y_{\Xi,i}\subset X_{i}\times\mathbf{P}_{i}$, the graph
of $\pi_{\Xi,i}=\pi_{\Xi}:X_{i}\dashrightarrow\mathbf{P}_{i}$, $i=4,5,6$. 
\begin{prop}
\label{prop:Projection-Sigma2-2}With the notation above, for $i=4,5,6$
we have the following Sarkisov link  \begin{equation}\label{eq:Grass-Sigma-2-2-link}
 \xymatrix@C1.7em{ \mathrm{Bl}_{S_{X_i}} H_{X_{i}}\cong \mathbb{P}_{\Xi}(\mathcal{C}_{\Xi/X_i}) \ar[d] \ar@{^{(}->}[rrr] & & & Y_{\Xi,i} \ar[dl]_{\mathrm{p}_{X_i}} \ar[dr]^{\mathrm{p}_2}  & & & \mathrm{Bl}_{\Xi}(Z_\Xi\cap X_i) \ar@{_{(}->}[lll] \ar[d]\\
\Xi \ar@{^{(}->}[r] & Z_{\Xi}\cap X_i \ar@{^{(}->}[r] & X_i \ar@{-->}@<1ex>[rr]^{\pi_{\Xi,i}} & & \mathbf{P}_i  \ar@{-->}@<1ex>[ll]^{\Phi_i} & H_{X_{i}} \ar@{_{(}->}[l] & S_{X_i} \ar@{_{(}->}[l].} 
\end{equation}   where $\mathrm{p}_{X_{i}}:Y_{\Xi,i}\to X_{i}$ is the blow-up of
$\Xi$, $\mathrm{Bl}_{\Xi}(Z_{\Xi}\cap X_{i})$ is the proper transform
of $Z_{\Xi}\cap X_{i}$, $\mathrm{p}_{2}:Y_{\Xi,i}\to\mathbf{P}_{i}$
if the blow-up of $S_{X_{i}}$ and $\mathrm{Bl}_{S_{X_{i}}}H_{X_{i}}$
is the proper transform of $H_{X_{i}}\subset\mathbf{P}_{i}$. The
birational inverse $\Phi_{i}$ of $\pi_{\Xi,i}$ is given by the complete
linear system of quadrics in $\mathbf{P}_{i}$ containing $S_{X_{i}}$. 
\end{prop}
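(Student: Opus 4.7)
The plan is to establish the link for $i = 6$ first, working on the Grassmannian itself, and then to deduce the cases $i = 5, 4$ by restricting to the linear sections $Z_{\langle s \rangle}$ and $W_L$, both of which contain $\Xi$.

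For $i = 6$, the left-hand side rests on identifying $\mathrm{p}_G : Y_{\Xi, 6} \to G$ with $\mathrm{Bl}_{\Xi} G$. The rational map $\pi_{\Xi, 6}$ is defined by the 7-dimensional sub-linear system $K^{\vee} \subset H^0(G, \Lambda^2 \mathcal{Q}) = \Lambda^2 V^{\vee}$ of Plücker forms vanishing on $\Lambda^2 V_3$, which cut out $\Xi = G \cap \mathbb{P}_k(\Lambda^2 V_3^{\vee})$ set-theoretically in $G$. To see that the scheme-theoretic base locus is also the reduced $\Xi$, I would perform a local computation in an affine chart of $G$ centred at a general point of $\Xi$, using the description of $\Xi$ as the zero scheme of $(V/V_3)^{\vee}_G \to \mathcal{Q}$: the seven sections then visibly generate $\mathcal{I}_{\Xi}$ as an $\mathcal{O}_G$-module. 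Consequently the graph $Y_{\Xi, 6}$ is canonically the blow-up of $\Xi$ in $G$, with exceptional divisor $\mathbb{P}_{\Xi}(\mathcal{C}_{\Xi/G}) \cong \mathbb{P}_{\Xi}(\Omega^1_{\Xi}(1) \otimes (V/V_3)^{\vee}_{\Xi})$ by the earlier remark. Since $\dim G = \dim \mathbf{P}_6 = 6$ and $\pi_{\Xi, 6}$ is dominant, it is birational.

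The next task is to identify $\mathrm{p}_2 : Y_{\Xi, 6} \to \mathbf{P}_6$ as the blow-up of $S_G$. Set-theoretically, the positive-dimensional fibres of $\pi_{\Xi, 6}$ are exactly the $\sigma_{3,1}$-planes $\sigma_{3,1}(V_1 \subset V_4) \subset Z_{\Xi}$, which sweep out $S_G \subset H_G$, so $\mathrm{p}_2$ contracts the proper transform $\mathrm{Bl}_{\Xi}(Z_{\Xi})$ onto $S_G$. To promote this to a scheme-theoretic blow-up statement, I would describe the inverse $\Phi_6 : \mathbf{P}_6 \dashrightarrow G$ by the complete linear system of quadrics in $\mathbf{P}_6$ containing $S_G$: this system has dimension $10$, its generators pull back under $\pi_{\Xi, 6}$ to the ten quadratic Plücker relations cutting out $G \subset \mathbb{P}^9$ (after dividing by the fixed linear form defining $H_G$), and a direct check shows $\Phi_6$ and $\pi_{\Xi, 6}$ are mutually inverse birational maps. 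Since $S_G$ is smooth and $\Phi_6$ has it as scheme-theoretic base locus, the universal property identifies $\mathrm{p}_2$ with $\mathrm{Bl}_{S_G} \mathbf{P}_6 \to \mathbf{P}_6$, and matching divisors yields $\mathbb{P}_{\Xi}(\mathcal{C}_{\Xi/G}) \cong \mathrm{Bl}_{S_G} H_G$ (the proper transform of the hyperplane) and $\mathrm{Bl}_{\Xi}(Z_{\Xi}) \cong \mathbb{P}_{S_G}(\mathcal{C}_{S_G/\mathbf{P}_6})$ (the exceptional divisor of $\mathrm{p}_2$).

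For $i = 5, 4$, since $X_i$ is cut out of $G$ by linear forms whose images in $K^{\vee}$ lie in $\langle s \rangle$ (respectively $L$), and since $\Xi \subset X_i$ with both $X_i$ and $\Xi$ smooth, the proper transform of $X_i$ in $\mathrm{Bl}_{\Xi} G$ coincides with $\mathrm{Bl}_{\Xi} X_i$; this strict transform is precisely the graph $Y_{\Xi, i}$. Restricting the link from $i = 6$ then gives that $\mathrm{p}_{X_i}$ is the blow-up along $\Xi$ with exceptional divisor $\mathbb{P}_{\Xi}(\mathcal{C}_{\Xi/X_i})$, that the contracted locus of $\mathrm{p}_2$ is $\mathrm{Bl}_{\Xi}(Z_{\Xi} \cap X_i)$ with image $S_{X_i} = S_G \cap \mathbf{P}_i$ (smooth by the smoothness assumption on $X_i$), and that $H_{X_i} = H_G \cap \mathbf{P}_i$. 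The inverse $\Phi_i$ is then given by the restriction to $\mathbf{P}_i$ of quadrics through $S_G$, i.e.\ by the complete linear system of quadrics in $\mathbf{P}_i$ through $S_{X_i}$. The hard part will be the scheme-theoretic identification of $\mathrm{p}_2$ as $\mathrm{Bl}_{S_G} \mathbf{P}_6$: the explicit quadric construction of $\Phi_6$, classical since Todd, requires a careful count of $H^0(\mathbf{P}_6, \mathcal{I}_{S_G}(2))$ and a verification that these quadrics pull back correctly. A cleaner alternative would be to invoke a Fujiki--Nakano-type contraction criterion applied to the family of proper transforms of $\sigma_{3,1}$-planes in $Y_{\Xi, 6}$, computing their normal bundle from the conormal formula in the preceding remark.
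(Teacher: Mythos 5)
Your plan is essentially the argument the paper has in mind: its own proof is a one-line citation to Todd together with the preceding description of $\pi_{\Xi}$, and your strategy of establishing the $i=6$ link by a local/explicit computation and then restricting to $Z_{\langle s\rangle}$ and $W_{L}$ (whose strict transforms in $\mathrm{Bl}_{\Xi}G$ are the graphs $Y_{\Xi,i}$) is exactly that classical route, correctly executed. One small slip: $\mathrm{G}(2,5)\subset\mathbb{P}_{k}^{9}$ is cut out by five Pl\"ucker quadrics, not ten; the relevant verification is rather that the ten quadrics through $S_{G}$ pull back, modulo the ideal of $G$, to $w_{12}$ times the ten Pl\"ucker coordinates, so that dividing by the equation of $Z_{\Xi}$ recovers the identity on $G$.
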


\begin{proof}
All the properties follow from \cite{Todd30} and the description
above. 
\end{proof}
\begin{example}
\label{exa:Projection-sigma22} With Notation \ref{nota:Plucker-Coord},
let $V_{3}=\langle e_{3},e_{4},e_{5}\rangle$, let
\[
\Xi=\sigma_{2,2}(V_{3})=\{w_{12}=w_{13}=w_{14}=w_{15}=w_{23}=w_{24}=w_{25}=0\}
\]
be the associated plane of $G$ and let 
\[
\mathbb{P}_{k}(\Lambda^{2}V^{\vee})=\mathbb{P}_{k}^{9}\dashrightarrow\mathbb{P}_{k}^{6}=\mathbb{P}_{k}(K^{\vee}),\,[w_{ij}]_{1\leq i<j\leq5}\mapsto[w_{12}:w_{13}:w_{14}:w_{15}:w_{23}:w_{24}:w_{25}]
\]
be the associated linear projection. The skew-symmetric forms $s=e_{1}^{\vee}\wedge e_{3}^{\vee}-e_{2}^{\vee}\wedge e_{4}^{\vee}$
and $s'=e_{1}^{\vee}\wedge e_{4}^{\vee}-e_{2}^{\vee}\wedge e_{5}^{\vee}$
generate a subspace $L\subset\Lambda^{2}V^{\vee}$ whose non-zero
elements all have rank $4$. The associated smooth linear section
$W_{L}=G\cap\{w_{13}-w_{24}=0\}\cap\{w_{14}-w_{25}=0\}$ is the smooth
fourfold in $\mathbb{P}_{k}^{7}\subset\mathbb{P}_{k}^{9}$ with coordinates
$w_{ij}$, $(i,j)\neq(2,4),(2,5)$ defined by the equations \[ \left\{\begin{array}{r} w_{12}w_{34}-w_{13}^2+w_{14}w_{23}=0\\ w_{12}w_{35}-w_{13}w_{14}+w_{15}w_{23}=0\\ w_{12}w_{45}-w_{14}^2+w_{13}w_{15}=0\\ w_{13}w_{45}-w_{14}w_{35}+w_{15}w_{34}=0\\ w_{23}w_{45}-w_{13}w_{35}+w_{14}w_{34}=0 \end{array} \right. \]
The smooth hyperplane section $Z_{\langle s\rangle}=G\cap\{w_{13}-w_{24}=0\}$
of Example \ref{exa:Projection-sigma30} and the above smooth linear
section $W_{L}$ both contain $\Xi$. The images of $Z_{\Xi}=G\cap\{w_{12}=0\}$,
$Z_{\Xi}\cap Z_{\langle s\rangle}$ and $Z_{\Xi}\cap W_{L}$ by the
projection $\pi_{\Xi}$ and its successive restrictions are the smooth
threefold $S_{G}\cong\mathbb{P}_{k}^{1}\times\mathbb{P}_{k}^{2}$,
the smooth rational cubic surface $S_{Z_{\langle s\rangle}}\cong\mathbb{P}_{\mathbb{P}_{k}^{1}}(\mathcal{O}_{\mathbb{P}_{k}^{1}}(1)\oplus\mathcal{O}_{\mathbb{P}_{k}^{1}}(2))\cong\mathbb{F}_{1}$
and the smooth rational normal curve $S_{W_{L}}\cong\mathbb{P}_{k}^{1}$
with equations \[ \left\{\begin{array}{r}-w_{13}w_{24}+w_{14}w_{23}=0\\ -w_{13}w_{25}+w_{15}w_{23}=0\\ -w_{14}w_{25}+w_{15}w_{24}=0 \end{array} \right. \quad \left\{\begin{array}{r}-w_{13}^2+w_{14}w_{23}=0\\ -w_{13}w_{25}+w_{15}w_{23}=0\\ -w_{14}w_{25}+w_{15}w_{13}=0 \end{array} \right. \; \textrm{and} \; \left\{\begin{array}{r}-w_{13}^2+w_{14}w_{23}=0\\ -w_{13}w_{14}+w_{15}w_{23}=0\\ -w_{14}^2+w_{15}w_{13}=0 \end{array} \right.
\] in $\mathbb{P}_{k}^{5}$ , $\mathbb{P}_{k}^{4}$ and $\mathbb{P}_{k}^{3}$
respectively. 
\end{example}

\begin{rem}
By a result attributed to Weil \cite{Wei57}, the smooth varieties
$\mathbb{P}_{k}^{1}\times\mathbb{P}_{k}^{2}\subset\mathbb{P}_{k}^{5}$,
$\mathbb{F}_{1}\subset\mathbb{P}_{k}^{4}$ and the rational normal
curve $\mathbb{P}_{k}^{1}\subset\mathbb{P}_{k}^{3}$ are the only
smooth cubics which are not hypersurfaces. 
\end{rem}

For a smooth codimension $2$ linear section $W_{L}$ of $G$ with
unique $\sigma_{2,2}$-plane $\Xi_{L}=\sigma_{2,2}(V_{3,L})$, we
infer the following description of $\mathrm{Aut}_{k}(W_{L})$: 
\begin{cor}
\label{cor:Aut-W5}There exists a split exact sequence of $k$-group
schemes
\[
0\to\mathrm{Aut}_{k}(W_{L},\Xi_{L})_{0}\cong\mathbb{G}_{a,k}^{4}\rtimes\mathbb{G}_{m,k}\to\mathrm{Aut}_{k}(W_{L},\Xi_{L})=\mathrm{Aut}_{k}(W_{L})\to\mathrm{Aut}_{k}(S_{W_{L}})\cong\mathrm{PGL}_{k}(L)\to0,
\]
where $\mathrm{Aut}_{k}(W_{L},\Xi_{L})_{0}$ is the kernel of the
restriction homomorphism $\mathrm{Aut}_{k}(W_{L},\Xi_{L})\to\mathrm{Aut}_{k}(\Xi_{L})$. 
\end{cor}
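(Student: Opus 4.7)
The proof parallels that of Corollary~\ref{cor:Aut-Z5}, now using the Sarkisov link of Proposition~\ref{prop:Projection-Sigma2-2} with $i = 4$. By Lemma~\ref{lem:Unique-plane-W5}, $\Xi_L$ is the unique $\sigma_{2,2}$-plane contained in $W_L$, so $\mathrm{Aut}_k(W_L,\Xi_L) = \mathrm{Aut}_k(W_L)$. Since the Sarkisov link depends only on the pair $(W_L,\Xi_L)$, it is $\mathrm{Aut}_k(W_L)$-equivariant: the action lifts to $Y_{\Xi,4}$ and descends through $\mathrm{p}_2$ to an action on $\mathbb{P}_k(K^\vee/L)$ preserving both the hyperplane $H_{W_L}$ and the smooth rational normal cubic $S_{W_L}\subset H_{W_L}\cong \mathbb{P}_k^3$. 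Using the canonical isomorphism $S_{W_L}\cong C_L\cong \mathbb{P}_k(L^\vee)$ coming from the Veronese parametrization of the conic $C_L$ in the proof of Lemma~\ref{lem:Unique-plane-W5}, the induced action on $S_{W_L}$ yields the quotient homomorphism $\rho : \mathrm{Aut}_k(W_L)\to \mathrm{Aut}_k(S_{W_L}) \cong \mathrm{PGL}_k(L)$.

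To identify the kernel of $\rho$, I would work through the isomorphism $\Phi:\mathrm{PGL}_k(V^\vee) \to \mathrm{Aut}_k(G)$ of~\eqref{eq:Aut-Plucker}, in analogy with the proof of Corollary~\ref{cor:Aut-Z5}. An element of $\mathrm{PGL}_k(V^\vee)$ preserves both $W_L$ and $\Xi_L$ iff it preserves $V_{3,L}$ and the two-dimensional subspace $L\subset K^\vee$, so the relevant stabilizer is the subgroup $\mathrm{P}\Delta_{V_{3,L},W_L}$ of $\mathrm{P}\Delta_{V_{3,L}}$ (Notation~\ref{nota:Stabilizers-Schubert}) fixing $L$. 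After fixing a splitting $V\cong V_{3,L}\oplus(V/V_{3,L})$, the $\Lambda^2$-action on $K^\vee = \Lambda^2(V/V_{3,L})^\vee \oplus (V/V_{3,L})^\vee\otimes V_{3,L}^\vee$ of a matrix $M(A_2,A_3,U)\in\Delta_{V_{3,L}}$ is block upper-triangular with diagonal blocks $\det A_2$ and $A_2\otimes A_3$ and an off-diagonal term involving $U$. A direct computation then shows that the elements of $\mathrm{P}\Delta_{V_{3,L},W_L}$ acting trivially on $\Xi_L\cong\mathbb{P}_k(V_{3,L})$---equivalently, those with $A_3$ scalar---reduce to classes $M(a\cdot\mathrm{id}_2,\mathrm{id}_3,U)$ where $U\in \mathrm{Hom}_k(V_{3,L}^\vee,(V/V_{3,L})^\vee)$ satisfies the two linear conditions forced by the preservation of the two generators of $L$, producing the expected structure $\mathbb{G}_{a,k}^4\rtimes\mathbb{G}_{m,k}$ for $\mathrm{Aut}_k(W_L,\Xi_L)_0$.

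A section of $\rho$ is obtained from the canonical $\mathrm{GL}_k(L)$-equivariant identification $V_{3,L}^\vee \cong \mathrm{Sym}^2 L^\vee$ determined by the conic $C_L$, together with the compatible $\mathrm{GL}_k(L)$-structure on $V/V_{3,L}$ making $L\subset \Lambda^2 V^\vee$ into a $\mathrm{GL}_k(L)$-subrepresentation; the resulting embedding $\mathrm{PGL}_k(L)\hookrightarrow\mathrm{P}\Delta_{V_{3,L},W_L}$ maps via $\Phi$ to a subgroup of $\mathrm{Aut}_k(W_L)$ projecting isomorphically onto $\mathrm{PGL}_k(L)$ under $\rho$. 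The delicate point will be verifying the equality $\ker\rho = \mathrm{Aut}_k(W_L,\Xi_L)_0$, which I would establish by showing that the restriction homomorphism $\mathrm{Aut}_k(W_L,\Xi_L)\to\mathrm{Aut}_k(\Xi_L)$ factors through the subgroup $\mathrm{PGL}_k(L)\subset \mathrm{PGL}_k(V_{3,L})$ stabilizing the polar conic of $C_L$ in $\Xi_L$---a consequence of the fact that the $\sigma_{3,1}$-planes in $Z_\Xi\cap W_L$ meeting $\Xi_L$ are naturally parametrized by $C_L$ (see~\S\ref{subsec:Projection-Sigma2-2})---and that this factorization coincides with $\rho$ under the canonical identification $S_{W_L}\cong C_L\cong\mathbb{P}_k(L^\vee)$.
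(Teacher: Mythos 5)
Your construction of the quotient map $\rho:\mathrm{Aut}_{k}(W_{L})\to\mathrm{Aut}_{k}(S_{W_{L}})\cong\mathrm{PGL}_{k}(L)$ via the equivariance of the link of Proposition \ref{prop:Projection-Sigma2-2} is sound and close to the paper's argument, as is your sketch that triviality on $\Xi_{L}$ corresponds to triviality on $S_{W_{L}}$ through the parametrization of the $\sigma_{3,1}$-planes. The genuine gap is in your identification of $\mathrm{Aut}_{k}(W_{L},\Xi_{L})_{0}$ with $\mathbb{G}_{a,k}^{4}\rtimes\mathbb{G}_{m,k}$: your computation takes place entirely inside $\mathrm{PGL}_{k}(V^{\vee})\cong\mathrm{Aut}_{k}(G)$, i.e.\ you only describe those automorphisms of $W_{L}$ that are \emph{induced} from the ambient Grassmannian. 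This yields a subgroup of $\mathrm{Aut}_{k}(W_{L},\Xi_{L})_{0}$ isomorphic to $\mathbb{G}_{a,k}^{4}\rtimes\mathbb{G}_{m,k}$ (a lower bound), but it does not show that there is nothing more: you would need surjectivity of the restriction homomorphism $\mathrm{Aut}_{k}(G,W_{L})\to\mathrm{Aut}_{k}(W_{L})$, a nontrivial linearity statement for linear sections of Grassmannians (it is true, cf.\ Piontkowski--Van de Ven, but it is neither proved in the paper nor cited or argued in your proposal). Note that in the analogous Corollary \ref{cor:Aut-Z5} the paper does not assume such an extension property either; the kernel there is pinned down through the structure transported across the Sarkisov link, not by restricting ambient automorphisms.

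The paper's proof avoids this issue by working entirely on the other side of the link: since $\Xi_{L}$ is the unique $\sigma_{2,2}$-plane and $Z_{\Xi}\cap W_{L}$ is the union of the $\sigma_{3,1}$-planes, the projection $\pi_{\Xi}$ induces an isomorphism of $k$-group schemes $\mathrm{Aut}_{k}(W_{L})=\mathrm{Aut}_{k}(W_{L},(Z_{\Xi},\Xi))\cong\mathrm{Aut}_{k}(\mathbb{P}_{k}(K^{\vee}/L),(H_{W_{L}},S_{W_{L}}))$ (the inverse $\Phi$, given by quadrics through $S_{W_{L}}$, provides the lift back), and under this isomorphism $\mathrm{Aut}_{k}(W_{L},\Xi_{L})_{0}$ goes to the kernel of restriction to $S_{W_{L}}$; since $S_{W_{L}}$ spans $H_{W_{L}}$, that kernel is visibly the group $\mathbb{G}_{a,k}^{4}\rtimes\mathbb{G}_{m,k}$ of linear automorphisms of $\mathbb{P}_{k}^{4}$ fixing the hyperplane $H_{W_{L}}$ pointwise, and the splitting comes from $\mathrm{Aut}_{k}(H_{W_{L}},S_{W_{L}})\cong\mathrm{Aut}_{k}(S_{W_{L}})$. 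You could repair your argument either by importing this upper bound (you already have the faithful descent of $\mathrm{Aut}_{k}(W_{L})$ to $\mathbb{P}_{k}(K^{\vee}/L)$, so $\ker\rho$ embeds into that elation group and your ambient subgroup then fills it by dimension reasons) or by proving/citing the extension property for automorphisms of $W_{L}$; as written, the proposal is incomplete at exactly this point. Your proposed representation-theoretic splitting ($V_{3,L}^{\vee}\cong\mathrm{Sym}^{2}L^{\vee}$, etc.) is plausible but also only sketched, whereas the paper gets the splitting for free from the twisted cubic.
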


\begin{proof}
Since, by Lemma \ref{lem:Unique-plane-W5}, $\Xi=\Xi_{L}$ is the
unique $\sigma_{2,2}$-plane contained in $W_{L}$ and since the intersection
$Z_{\Xi}\cap W_{L}$ is the union of all the $\sigma_{3,1}$-planes
contained in $W_{L}$, we have $\mathrm{Aut}_{k}(W_{L})=\mathrm{Aut}_{k}(W_{L},(Z_{\Xi},\Xi))$.
Since the $\sigma_{3,1}$-planes of $W_{L}$ intersect $\Xi\cong\mathbb{P}_{k}(V_{3,L})$
along the smooth conic $C_{L^{\vee}}$ dual to $C_{L}\cong\mathbb{P}_{k}(L^{\vee})\hookrightarrow\mathbb{P}_{k}(V_{3,L}^{\vee})$
(see the proof of Lemma \ref{lem:Unique-plane-W5}), the image of
the restriction homomorphism $\mathrm{Aut}_{k}(W_{L},\Xi_{L})\to\mathrm{Aut}_{k}(\Xi_{L})$
is contained in the subgroup $\mathrm{Aut}(\Xi_{L},C_{L^{\vee}})\cong\mathrm{Aut}_{k}(C_{L^{\vee}})\cong\mathrm{PGL}_{k}(L)$.
Since on the other hand the $\sigma_{3,1}$-planes of $W_{L}$ are
the closures of the fibers of $\pi_{\Xi}:W_{L}\setminus\Xi\to\mathbb{P}_{k}(K^{\vee}/L)$
over the $k$-points of $S_{W_{L}}$, the projection $\pi_{\Xi}:W_{L}\dashrightarrow\mathbb{P}_{k}(K^{\vee}/L)$
induces an isomorphism of $k$-group schemes $\mathrm{Aut}_{k}(W_{L},(Z_{\Xi},\Xi))\cong\mathrm{Aut}_{k}(\mathbb{P}_{k}(K^{\vee}/L),(H_{W_{L}},S_{W_{L}}))$
which maps $\mathrm{Aut}_{k}(W_{L},\Xi_{L})_{0}$ isomorphically onto
the kernel $\mathbb{G}_{a,k}^{4}\rtimes\mathbb{G}_{m,k}$ of the restriction
homomorphism $\mathrm{Aut}_{k}(\mathbb{P}_{k}(K^{\vee}/L),(H_{W_{L}},S_{W_{L}}))\to\mathrm{Aut}_{k}(S_{W_{L}})$.
The latter homomorphism is a split surjection, which identifies $\mathrm{Aut}_{k}(S_{W_{L}})$
with $\mathrm{Aut}_{k}(H_{W_{L}},S_{W_{L}})$. 
\end{proof}

\section{Smooth quintic del Pezzo varieties with vector group structures}

Recall that a smooth quintic del Pezzo $k$-variety of dimension $n\in\{2,\ldots6\}$
is a $k$-form $X$ of a smooth section of the Grassmannian $\mathrm{G}(2,5)\subset\mathbb{P}_{k}^{9}$
by a linear subspace of dimension $6-n$. For $n\leq3$, the automorphism
group of $X_{\bar{k}}$ is too small to allow the existence of a vector
subgroup structure on $X_{\bar{k}}$. In this section, we consider
the case of smooth quintic del Pezzo $k$-varieties of dimension $4$,
$5$ and $6$. 

\subsection{Toric vector groups structures on linear sections of $\mathrm{G}(2,5)$}

Let $V$ be a $k$-vector space of dimension $5$ and, with the notation
introduced in $\S$ \ref{subsec:Projection-Sigma2-2}, let $X_{6}=\mathbb{G}_{k}(V^{\vee},2)$,
let $V_{3}\subset V$ be a $3$-dimensional $k$-vector subspace with
associated plane $\Xi=\sigma_{2,2}(V_{3})$ of $X_{6}$ and let $K_{6}=\Lambda^{2}V/\Lambda^{2}V_{3}$.
Let $\langle s\rangle\subset K_{6}^{\vee}$ and $L\subset K_{6}^{\vee}$
be respectively a $1$-dimensional and a $2$-dimensional linear subspace
of skew-symmetric bilinear forms on $V$ whose non-zero elements all
have maximal rank. Put $K_{5}^{\vee}=K_{6}^{\vee}/\langle s\rangle$,
$K_{4}^{\vee}=K_{6}^{\vee}/L$ and let $X_{5}=Z_{\langle s\rangle}$
and $X_{4}=W_{L}$ be the smooth linear sections of $X_{6}$ containing
$\Xi$ defined by $\langle s\rangle$ and $L$ respectively. Let $F_{i}=\mathrm{Hom}_{k}(K_{i}^{\vee}/\Lambda^{2}(V/V_{3})^{\vee},\Lambda^{2}(V/V_{3})^{\vee})\cong k^{\oplus i}$,
$i=4,5,6$, and let $\mathbb{V}_{k}(F_{i}^{\vee})$ be the associated
vector group. We derive from the exact sequence
\[
0\to\Lambda^{2}(V/V_{3})^{\vee}\stackrel{a}{\to}K_{i}^{\vee}\stackrel{b}{\to}K_{i}^{\vee}/\Lambda^{2}(V/V_{3})^{\vee}\to0,\quad i=4,5,6
\]
a faithful homomorphism of $k$-group schemes $\mathbb{V}_{k}(F_{i}^{\vee})\to\mathrm{GL}_{k}(K_{i}^{\vee})$,
$f\mapsto\mathrm{id}_{K_{i}^{\vee}}+a\circ f\circ b$ corresponding
to a $\mathbb{V}_{k}(F_{i}^{\vee})$-action on $\mathbb{P}_{k}(K_{i}^{\vee})$
restricting to the trivial action on the invariant hyperplane $H_{X_{i}}=\mathbb{P}_{k}(K_{i}^{\vee}/\Lambda^{2}(V/V_{3})^{\vee})$
and having $\mathbb{P}_{k}(K_{i}^{\vee})\setminus H_{X_{i}}$ as an
open orbit. By Proposition \ref{prop:Projection-Sigma2-2}, this action
lifts through the birational projection $\pi_{\Xi}:X_{i}\dashrightarrow\mathbb{P}_{k}(K_{i}^{\vee})$
from the plane $\Xi$ to a $\mathbb{V}_{k}(F_{i}^{\vee})$-action
on $X_{i}$ with open orbit $X_{i}\setminus(Z_{\Xi}\cap X_{i})$ and
whose restriction to $\Xi$ is trivial. As a consequence, we obtain
the following: 
\begin{prop}
\label{prop:Existence-Gan-smooth-section}Every smooth section of
$\mathrm{G}(2,5)\subset\mathbb{P}_{k}^{9}$ by a linear subspace of
codimension $\leq3$ admits a vector group structure. 
\end{prop}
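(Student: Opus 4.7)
The plan is to extend the explicit construction displayed in the preamble for codimensions $0$, $1$, $2$ (that is, $i \in \{4,5,6\}$) uniformly to cover the codimension-$3$ case $i = 3$. In all four cases the strategy is the same: pick a linear subspace $N \subset K_6^\vee$ of dimension $6 - i$ with the appropriate transversality so that $X_i := G \cap \bigcap_{s \in N}\{s = 0\}$ is a smooth section of $G$ of the required codimension containing the $\sigma_{2,2}$-plane $\Xi$, and then exhibit a faithful action of an $i$-dimensional vector $k$-group $\mathbb{V}_k(F_i^\vee)$ on $\mathbb{P}_k(K_i^\vee)$ (with $K_i^\vee = K_6^\vee/N$) that fixes the hyperplane $H_{X_i}$ pointwise. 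Pulling back through the birational projection $\pi_\Xi : X_i \dashrightarrow \mathbb{P}_k(K_i^\vee)$ then yields the sought vector group structure on $X_i$.

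The cases $i \in \{4,5,6\}$ are precisely what the preamble of the proposition already realizes: the short exact sequence $0 \to \Lambda^{2}(V/V_{3})^{\vee} \to K_{i}^{\vee} \to K_{i}^{\vee}/\Lambda^{2}(V/V_{3})^{\vee} \to 0$ produces the faithful embedding $\mathbb{V}_k(F_i^\vee) \hookrightarrow \mathrm{GL}_k(K_i^\vee)$, and Proposition~\ref{prop:Projection-Sigma2-2} transports the resulting action on $\mathbb{P}_k(K_i^\vee)$ through $\pi_\Xi$ to a regular action on $X_i$ whose open orbit $X_i \setminus (Z_\Xi \cap X_i)$ has full dimension $i$. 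For the codimension-$3$ case I would take $N = M$ a $3$-dimensional subspace of $K_6^\vee$ whose nonzero elements are all skew-symmetric forms on $V$ of maximal rank --- equivalently $\mathbb{P}_k(M^\vee) \subset \mathbb{P}_k(\Lambda^2 V)$ is disjoint from $\mathbb{G}_k(V,2)$ --- and with $\Lambda^2(V/V_3)^\vee \not\subset M$. Setting $K_3^\vee := K_6^\vee / M$ and $F_3 := \mathrm{Hom}_k(K_3^\vee/\Lambda^{2}(V/V_{3})^{\vee},\,\Lambda^{2}(V/V_{3})^{\vee}) \cong k^{\oplus 3}$, the same recipe yields a faithful $\mathbb{V}_k(F_3^\vee)$-action on $\mathbb{P}_k(K_3^\vee) \cong \mathbb{P}_k^3$ fixing $H_{X_3}$ pointwise and acting by translations on the affine complement, and the same Sarkisov-link analysis as in Proposition~\ref{prop:Projection-Sigma2-2}, applied now with $i = 3$, lifts it to a regular action on $X_3$ whose open orbit has dimension $3$ and is therefore dense.

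The main obstacle, uniform across $i \in \{3,4,5,6\}$, is verifying that the rational action produced on $X_i$ by transport through the birational map $\pi_\Xi$ extends to a regular action on the full section, including along $\Xi$ and along the locus $Z_\Xi \cap X_i$ where $\pi_\Xi$ fails to be an isomorphism. The crucial observation is that because the action on $\mathbb{P}_k(K_i^\vee)$ fixes $H_{X_i}$ pointwise it automatically preserves the cubic-scroll locus $S_{X_i} \subset H_{X_i}$, so it lifts to the common resolution $Y_{\Xi,i}$ of the Sarkisov link of Proposition~\ref{prop:Projection-Sigma2-2}; stability of the exceptional divisor $\mathbb{P}_\Xi(\mathcal{C}_{\Xi/X_i})$ over $\Xi$ then lets the action descend along the other contraction of the link, yielding a regular $\mathbb{V}_k(F_i^\vee)$-action on $X_i$. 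A secondary check specific to $i = 3$ is that $M$ can be chosen so that $X_3$ really is smooth as the statement requires; this is a Bertini-type open condition on the parameter space of $3$-dimensional subspaces of $K_6^\vee$ whose projectivization avoids $\mathbb{G}_k(V,2)$, and it can be arranged to hold simultaneously with the non-containment $\Lambda^2(V/V_3)^\vee \not\subset M$ needed for the construction.
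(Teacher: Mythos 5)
For linear subspaces of codimension $0$, $1$ and $2$ your argument coincides with the paper's: the displayed construction preceding the statement \emph{is} the proof, and your discussion of why the action on $\mathbb{P}_{k}(K_{i}^{\vee})$ transports to a regular action on $X_{i}$ through the link of Proposition \ref{prop:Projection-Sigma2-2} is a correct elaboration of it. The gap is in your codimension-$3$ case, and it sits exactly in the step you postpone as a ``secondary check''. For $\Xi=\sigma_{2,2}(V_{3})$ to lie on $X_{3}$ you must take $M$ inside $K_{6}^{\vee}=(\Lambda^{2}V/\Lambda^{2}V_{3})^{\vee}$, which has dimension $7$; but the decomposable (rank $2$) forms in $K_{6}^{\vee}$ are the $u^{\vee}\wedge v^{\vee}$ with $\langle u^{\vee},v^{\vee}\rangle\cap(V/V_{3})^{\vee}\neq 0$, and these form the affine cone over a $4$-dimensional Schubert variety of $\mathbb{G}_{k}(V^{\vee},2)$, i.e.\ a $5$-dimensional cone in $K_{6}^{\vee}$. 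Since $3+5>7$, \emph{every} $3$-dimensional $M\subset K_{6}^{\vee}$ contains a nonzero form $s_{0}$ with $\dim\mathrm{Ker}\,\tilde{s}_{0}=3$, and $W_{M}$ then fails the smoothness criterion of $\S$ \ref{subsec:Codim2-Sec} at a point $E\subset\mathrm{Ker}\,\tilde{s}_{0}$ lying on $W_{M}$. So the two conditions you need --- smoothness of $X_{3}$ and $\Xi\subset X_{3}$ --- are incompatible, and your ``Bertini-type open condition'' is the empty set.

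This is not an accident of the method: the conclusion itself is false in codimension exactly $3$. A smooth codimension-$3$ linear section of $\mathrm{G}(2,5)$ is the quintic del Pezzo threefold, whose automorphism group over $\bar{k}$ is $\mathrm{PGL}_{2}(\bar{k})$ (as recalled in the introduction and at the beginning of Section 3) and hence contains no $3$-dimensional vector group; equivalently, it contains no plane at all, since its Picard group is generated by an ample class of degree $5$ and so every effective divisor on it has degree divisible by $5$. The codimension-$3$ sections that do contain a $\sigma_{2,2}$-plane are precisely the singular ones treated in Section 4 (see Proposition \ref{prop:trinodal-link-sigma_2,2}). The bound ``codimension $\leq3$'' in the statement should therefore be read as ``codimension $\leq2$'' (equivalently $\dim X_{i}\geq4$), which is all that the preceding construction establishes and all that the rest of the paper uses; you should drop the $i=3$ case rather than attempt to complete it.
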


\begin{example}
\label{exa:Explicit-Toric-Structures}With the notation of Example
\ref{exa:Projection-sigma22}, let $V_{3}=\langle e_{3},e_{4},e_{5}\rangle$
with associated linear projection 
\[
\mathbb{P}_{k}^{9}\dashrightarrow\mathbb{P}_{k}^{6},\,[w_{ij}]_{1\leq i<j\leq5}\mapsto[w_{12}:w_{13}:w_{14}:w_{15}:w_{23}:w_{24}:w_{25}],
\]
 let $s=e_{1}^{\vee}\wedge e_{3}^{\vee}-e_{2}^{\vee}\wedge e_{4}^{\vee}\in K_{6}^{\vee}$
and $L\subset K_{6}^{\vee}$ be the subspace generated by $s$ and
$s'=e_{1}^{\vee}\wedge e_{4}^{\vee}-e_{2}^{\vee}\wedge e_{5}^{\vee}$. 

$\bullet$ For the basis $t_{ij}=(e_{i}^{\vee}\wedge e_{j}^{\vee})\otimes(\bar{e}_{1}\wedge\bar{e}_{2})$,
$i=1,2$, $j=3,4,5$ of $F_{6}^{\vee}$, the corresponding action
of $\mathbb{V}_{k}(F_{6}^{\vee})\cong\mathrm{Spec}(k[t_{ij}])$ on
$\mathbb{P}_{k}(K_{6}^{\vee})=\mathbb{P}_{k}^{6}$ with open orbit
$\mathbb{P}_{k}^{6}\setminus\{w_{12}=0\}$ is the ``toric'' $\mathbb{G}_{a}^{6}$-structure
on $\mathbb{P}_{k}^{6}$ given by $w_{12}\mapsto w_{12}$ and $w_{ij}\mapsto w_{ij}+t_{ij}w_{12}$
for $(i,j)\neq(1,2)$.\footnote{By \cite{AR17}, every complete toric variety admitting a vector group
structure has such a structure which is normalized by the torus. We
used the term ``toric'' here to indicate the fact that the given
$\mathbb{G}_{a}^{6}$-action is normalized by the toric structure
on $\mathbb{P}_{k}^{6}$. } Its lift to $X_{6}=G$ is the restriction of the $\mathbb{V}_{k}(F_{6}^{\vee})$-action
on $\mathbb{P}_{k}(\Lambda^{2}V^{\vee})$ induced by the second exterior
power of the representation 
\begin{equation}
\rho_{6}:\mathbb{V}_{k}(F_{6}^{\vee})\to\mathrm{GL}_{k}(V^{\vee}),\quad(t_{13},t_{23},t_{14},t_{24},t_{15},t_{25})\mapsto\left(\begin{array}{ccccc}
1 & 0 & -t_{23} & -t_{24} & -t_{25}\\
0 & 1 & t_{13} & t_{14} & t_{15}\\
\vdots & \ddots & 1 & 0 & 0\\
\vdots &  & \ddots & 1 & 0\\
0 & \cdots & \cdots & 0 & 1
\end{array}\right).\label{eq:Ga6-Rep-Grass}
\end{equation}
Comparing with the classification in \cite[Chapter 3, $\S$ 3, IV]{SuTy66},
the image of $\rho_{6}$ is one of the two maximal abelian unipotent
subgroups of $\mathrm{GL}_{5}$ corresponding to cases $N_{2}$ and
$N_{3}$. The other one, given by the representation dual to $\rho_{6}$,
corresponds to a vector group structure on $\mathbb{G}_{k}(V,2)\cong\mathbb{G}_{k}(V^{\vee},3)$.

$\bullet$ The corresponding action of $\mathbb{V}_{k}(F_{5}^{\vee})\cong\mathrm{Spec}(k[t_{13},t_{23},t_{14},t_{15},t_{25}])$
on $\mathbb{P}_{k}(K_{5}^{\vee})=\mathbb{P}_{k}^{5}$ with open orbit
$\mathbb{P}_{k}^{5}\setminus\{w_{12}=0\}$ is the toric $\mathbb{G}_{a}^{5}$-structure
on $\mathbb{P}_{k}^{5}$ given by $w_{12}\mapsto w_{12}$ and $w_{ij}\mapsto w_{ij}+t_{ij}w_{12}$
for $(i,j)\neq(1,2),(2,4)$. Its lift to $X_{5}=Z_{\langle s\rangle}$
is the restriction of the $\mathbb{V}_{k}(F_{5}^{\vee})$-action on
$\mathbb{P}_{k}(\Lambda^{2}V^{\vee})$ preserving $Z_{\langle s\rangle}=G\cap\{w_{13}-w_{24}=0\}$
induced by the second exterior power of the representation 
\begin{equation}
\rho_{5}:\mathbb{V}_{k}(F_{5}^{\vee})\to\mathrm{GL}_{k}(V^{\vee}),\quad(t_{13},t_{23},t_{14},t_{15},t_{25})\mapsto\left(\begin{array}{ccccc}
1 & 0 & -t_{23} & -t_{13} & -t_{25}\\
0 & 1 & t_{13} & t_{14} & t_{15}\\
\vdots & \ddots & 1 & 0 & 0\\
\vdots &  & \ddots & 1 & 0\\
0 & \cdots & \cdots & 0 & 1
\end{array}\right).\label{eq:Ga5-Rep-Z5}
\end{equation}
The stabilizer $\mathrm{Stab}(\langle e_{5}^{\vee}\rangle)$ of the
subspace $\langle e_{5}^{\vee}\rangle$ is the subgroup $\mathrm{Spec}(k[t_{15},t_{25}])$
of $\mathbb{V}_{k}(F_{5}^{\vee})$ and the induced action of the vector
group $\mathbb{V}_{k}(F_{5}^{\vee})/\mathrm{Stab}(\langle e_{5}^{\vee}\rangle)\cong\mathrm{Spec}(k[\bar{t}_{13},\bar{t}_{23},\bar{t}_{14}])$
on $\bar{V}^{\vee}=(V/\langle e_{5}\rangle)^{\vee}$ endowed with
the basis dual to that determined by the images $\bar{e}_{i}$ of
the $e_{i}$, $i=1,\ldots,4$, is given by the representation 
\begin{equation}
\bar{\rho}_{5}:\mathbb{V}_{k}(F_{5}^{\vee})/\mathrm{Stab}(\langle e_{5}^{\vee}\rangle)\to\mathrm{Sp}_{4}(\bar{V}^{\vee},\bar{s}),\quad(\bar{t}_{13},\bar{t}_{23},\bar{t}_{14})\mapsto\left(\begin{array}{cccc}
1 & 0 & -\bar{t}_{23} & -\bar{t}_{13}\\
0 & 1 & \bar{t}_{13} & \bar{t}_{14}\\
0 & 0 & 1 & 0\\
0 & 0 & 0 & 1
\end{array}\right),\label{eq:Induced-Rep-Symplectic}
\end{equation}
where $\bar{s}=\bar{e}_{1}^{\vee}\wedge\bar{e}_{3}^{\vee}-\bar{e}_{2}^{\vee}\wedge\bar{e}_{4}^{\vee}$
is the symplectic form on $\bar{V}$ induced by $s$. 

$\bullet$ The corresponding action of $\mathbb{V}_{k}(F_{4}^{\vee})\cong\mathrm{Spec}(k[t_{13},t_{23},t_{14},t_{15},])$
on $\mathbb{P}_{k}(K_{4}^{\vee})=\mathbb{P}_{k}^{4}$ with open orbit
$\mathbb{P}_{k}^{4}\setminus\{w_{12}=0\}$ is the toric $\mathbb{G}_{a}^{4}$-structure
on $\mathbb{P}_{k}^{4}$ given by $w_{12}\mapsto w_{12}$ and $w_{ij}\mapsto w_{ij}+t_{ij}w_{12}$
for $(i,j)\neq(1,2),(2,4),(2,5)$. Its lift to $X_{4}=W_{L}$ is the
restriction of the $\mathbb{V}_{k}(F_{4}^{\vee})$-action on $\mathbb{P}_{k}(\Lambda^{2}V^{\vee})$
preserving $W_{L}=G\cap\{w_{13}-w_{24}=w_{14}-w_{25}=0\}$ induced
by the second exterior power of the representation 
\begin{equation}
\rho_{4}:\mathbb{V}_{k}(F_{4}^{\vee})\to\mathrm{GL}_{k}(V^{\vee}),\quad(t_{13},t_{23},t_{14},t_{15})\mapsto\left(\begin{array}{ccccc}
1 & 0 & -t_{23} & -t_{13} & -t_{14}\\
0 & 1 & t_{13} & t_{14} & t_{15}\\
\vdots & \ddots & 1 & 0 & 0\\
\vdots &  & \ddots & 1 & 0\\
0 & \cdots & \cdots & 0 & 1
\end{array}\right).\label{eq:Ga4-Rep-W5}
\end{equation}
\end{example}

\subsection{Proof of Theorem \ref{thm:MainTh1}}

We now proceed to the proof of Theorem \ref{thm:MainTh1}, each case
$n\in\{4,5,6\}$ is treated separately in the next subsections.

\subsubsection{Proof of Theorem \ref{thm:MainTh1} for sixfolds}

A smooth quintic del Pezzo sixfold $X$ is a $k$-form of the Grassmannian
$\mathrm{G}(2,5)$. Recall \cite{BS64} that isomorphism classes of
$k$-forms of a projective $k$-variety $X$ are in one-to-one correspondence
with the elements of the Galois cohomology set $H^{1}(\Gamma,\mathrm{Aut}_{\bar{k}}(X_{\bar{k}}))$
of continuous Galois $1$-cocycles $\gamma:\Gamma=\mathrm{Gal}(\bar{k}/k)\to\mathrm{Aut}_{\bar{k}}(X_{\bar{k}})$,
where $\mathrm{Aut}_{\bar{k}}(X_{\bar{k}})$ is endowed with the discrete
topology and the natural action of $\Gamma$ by conjugation. Since
$\mathrm{Aut}_{\bar{k}}(\mathrm{G}(2,5)_{\bar{k}})\cong\mathrm{PGL}_{5}(\bar{k})=\mathrm{Aut}_{\bar{k}}(\mathbb{P}_{\bar{k}}^{4})$
(see $\S$ \ref{subsec:Grass-Prelim}), isomorphism classes of $k$-forms
of $\mathrm{G}(2,5)$ are in one-to-one correspondence with isomorphism
classes of $k$-forms $P$ of $\mathbb{P}_{k}^{4}$. In view of Lemma
\ref{lem:open-orbit}, the existence of a $k$-rational point is a
necessary condition for the existence of a vector group structure
on $X$. Conversely, for a $k$-form $X$ of $\mathrm{G}(2,5)$ containing
a $k$-rational point, the corresponding $k$-form $P$ of $\mathbb{P}_{k}^{4}$
contains a closed sub-variety $C$ defined over $k$ whose base extension
$C_{\bar{k}}$ is a line in $P_{\bar{k}}\cong\mathbb{P}_{\bar{k}}^{4}$.
Since the restriction of a canonical divisor $K_{P}$ of $P$ to $C$
has odd degree, $C$ is the trivial $k$-form of $\mathbb{P}_{k}^{1}$
and hence, $P$ is the trivial $k$-form of $\mathbb{P}_{k}^{4}$.\textcolor{purple}{{}
}This implies in turn that $X$ is isomorphic to $\mathrm{G}(2,5)$.
By Proposition \ref{prop:Existence-Gan-smooth-section}, $\mathrm{G}(2,5)$
admits at least one vector group structure. 

The following proposition completes the proof of Theorem \ref{thm:MainTh1}
in the case $n=6$. 
\begin{prop}
\label{prop:Uniqueness-Grass}The Grassmannian $\mathrm{G}(2,5)$
has a unique class of vector group structure.
\end{prop}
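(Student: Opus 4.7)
The plan is to linearize any vector group structure on $G=\mathrm{G}(2,5)=\mathbb{G}_{k}(V^{\vee},2)$ by lifting it to $\mathrm{GL}_{k}(V^{\vee})$, classify the resulting $6$-dimensional abelian unipotent subgroups there, and conjugate any two such structures into the canonical form of Example~\ref{exa:Explicit-Toric-Structures}.

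First, by Lemma~\ref{lem:open-orbit} combined with the equality $\dim G=6$, one has $\dim\mathbb{U}=6$, and the isomorphism $\mathrm{Aut}_{k}(G)\cong\mathrm{PGL}_{k}(V^{\vee})$ of $\S\ref{subsec:Plucker-Aut-Iso}$ realizes $\mu$ as a closed immersion $\tilde\rho:\mathbb{U}\hookrightarrow\mathrm{PGL}_{k}(V^{\vee})$. Pulling back the central extension $1\to\mathbb{G}_{m,k}\to\mathrm{GL}_{k}(V^{\vee})\to\mathrm{PGL}_{k}(V^{\vee})\to1$ along $\tilde\rho$ yields an extension of $\mathbb{U}\cong\mathbb{G}_{a,k}^{6}$ by $\mathbb{G}_{m,k}$ which is automatically commutative (the induced commutator pairing lands in $\mathrm{Hom}_{k}(\mathbb{G}_{a,k},\mathbb{G}_{m,k})=0$) and split (since $\mathrm{Ext}^{1}(\mathbb{G}_{a,k},\mathbb{G}_{m,k})=0$ in characteristic zero). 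The splitting provides an injective $k$-group homomorphism $\rho:\mathbb{U}\hookrightarrow\mathrm{GL}_{k}(V^{\vee})$ lifting $\tilde\rho$.

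Setting $W=(V^{\vee})^{\mathbb{U}}$, the classical bound that the maximum dimension of a commutative subalgebra of nilpotent matrices in $\mathrm{End}_{k}(V^{\vee})$ equals $\lfloor 5^{2}/4\rfloor=6$, attained precisely by subalgebras of the form $\mathrm{Hom}_{k}(V^{\vee}/W,W)$ with $\dim W\in\{2,3\}$, identifies $\rho(\mathbb{U})$ with $\{\mathrm{id}_{V^{\vee}}+\varphi:\varphi\in\mathrm{Hom}_{k}(V^{\vee}/W,W)\}$. To rule out $\dim W=3$, I would use the open-orbit hypothesis: for a $k$-point $[K]\in G$ represented by a $3$-dimensional subspace $K\subset V^{\vee}$, a direct computation shows that the Lie algebra of $\mathrm{Stab}_{\mathbb{U}}([K])$ equals $\{\bar\varphi\in\mathrm{Hom}_{k}(V^{\vee}/W,W):\bar\varphi((K+W)/W)\subset W\cap K\}$. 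When $\dim W=3$, transversality forces $d:=\dim(W\cap K)\geq\dim W+\dim K-\dim V^{\vee}=1$, and the above stabilizer has dimension $(3-d)d+(d-1)\cdot 3\geq 2$ for every such $K$. Consequently every orbit has dimension at most $4<6=\dim G$, contradicting the existence of an open orbit; hence $\dim W=2$.

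Finally, since $\mathrm{GL}_{k}(V^{\vee})$ acts transitively on $2$-dimensional $k$-subspaces of $V^{\vee}$, I can choose $g\in\mathrm{GL}_{k}(V^{\vee})$ sending $W$ to $\langle e_{1}^{\vee},e_{2}^{\vee}\rangle$; conjugation by $g$ maps $\rho(\mathbb{U})$ onto the image of the representation $\rho_{6}$ of Example~\ref{exa:Explicit-Toric-Structures}, and the induced $k$-automorphism of $G$ provides the desired equivalence between $\mu$ and the standard vector group structure. The main delicate point is the appeal to the classification of maximum-dimensional commutative nilpotent subalgebras; this is classical (Schur, Jacobson) but must be deployed carefully over a possibly non-algebraically closed field of characteristic zero, where the conjugating matrix of the normal form must be exhibited over $k$ itself rather than over $\bar k$.
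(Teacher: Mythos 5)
Your proof is correct, but it follows a genuinely different route from the paper's. The paper first produces a $\mathbb{U}$-invariant line $V_{1}\subset V$ via Lemma \ref{lem:Unipotent-basic-prop}(3), so that the projection from the solid $\sigma_{3,0}(V_{1})$ becomes equivariant; Proposition \ref{prop:adrien} then pushes the structure down to the quadric fourfold $\bar{G}=\mathbb{G}_{k}(\bar{V}^{\vee},2)$, where uniqueness is imported from Sharoyko's theorem \cite{Sha09}, and a short matrix computation pins down the lift back to $\mathrm{GL}_{k}(V^{\vee})$. You instead stay entirely inside linear algebra: after lifting to $\mathrm{GL}_{k}(V^{\vee})$ (which you justify via the vanishing of the commutator pairing and of $\mathrm{Ext}^{1}(\mathbb{G}_{a},\mathbb{G}_{m})$ — a step the paper merely asserts), you invoke the extremal case of Schur's bound to force $\rho(\mathbb{U})=\mathrm{id}+\mathrm{Hom}_{k}(V^{\vee}/W,W)$ with $\dim W\in\{2,3\}$, and your stabilizer computation $(3-d)d+3(d-1)\geq 2$ correctly eliminates $\dim W=3$ (the dual case, which acts with a dense orbit on $\mathbb{G}_{k}(V,2)$ rather than on $G$). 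What your approach buys is independence from \cite{Sha09} and from the Sarkisov link of Proposition \ref{prop:Projection-Sigma3-0 }; what it costs is the reliance on the classification of the equality case, which is \emph{not} literally Schur--Jacobson (their theorem gives only the bound $\lfloor n^{2}/4\rfloor$) and is in fact false for $n=3$, where $\langle N,N^{2}\rangle$ for $N$ regular nilpotent attains the bound without being square-zero. For $n=5$ the statement you need is exactly the Suprunenko--Tyshkevich classification ($N_{2}$ and $N_{3}$) that the paper cites in Example \ref{exa:Explicit-Toric-Structures}, so it is available; alternatively it can be checked directly by excluding, via centralizer-dimension and commutator-pairing counts, that a $6$-dimensional commuting family contains an element of Jordan type $(5)$, $(4,1)$, $(3,2)$ or $(3,1,1)$, whence every element squares to zero and polarization gives $A^{2}=0$. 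Two small points worth making explicit: you must pass from the commuting linear family $\mathrm{Lie}(\rho(\mathbb{U}))$ to the (automatically nilpotent) associative algebra it generates before applying the bound, the equality of dimensions then forcing the two to coincide; and your worry about rationality of the normal form is unfounded, since $W=(V^{\vee})^{\mathbb{U}}$ is canonically defined over $k$ and $\mathrm{GL}_{k}(V^{\vee})(k)$ acts transitively on $2$-planes.
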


\begin{proof}
Write $\mathrm{G}(2,5)=\mathbb{G}_{k}(V^{\vee},2)=G$ for some $5$-dimensional
$k$-vector space $V$. Through the isomorphism $\mathrm{PGL}_{k}(V^{\vee})\cong\mathrm{Aut}_{k}(G)$
of (\ref{eq:Aut-Plucker}), a vector group structure on $G$ is given
by the projective representation associated to a faithful representation
$\rho:\mathbb{U}\to\mathrm{GL}_{k}(V^{\vee})$ of a vector group $\mathbb{U}$.
Let $V_{1}\subset V$ be a $1$-dimensional $k$-vector subspace of
$V$ that is invariant for the representation dual to $\rho$, its
existence being guaranteed by Lemma \ref{lem:Unipotent-basic-prop}(3).
Since the action of $\mathrm{GL}_{k}(V)$ on such $1$-dimensional
subspaces is transitive, up to the choice of a basis of $V$ as in
Notation \ref{nota:Plucker-Coord} and up to changing $\rho$ by its
conjugate by a suitable automorphism of $G$ we henceforth assume
without loss of generality that $V_{1}=\langle e_{5}\rangle$ and
put $\bar{V}=V/V_{1}\cong\langle e_{1},\ldots,e_{4}\rangle$. The
image of $\rho$ is then contained in the stabilizer $\Delta_{V_{1}}$
of the subspace $\bar{V}^{\vee}\subset V^{\vee}$, see Notation \ref{nota:Stabilizers-Schubert}.
Let $\mathbb{U}'$ be the kernel of the induced representation $\mathbb{U}\to\mathrm{GL}_{k}(\bar{V}^{\vee})$
and let $\bar{\rho}:\bar{\mathbb{U}}=\mathbb{U}/\mathbb{U}'\to\mathrm{GL}_{k}(\bar{V}^{\vee})$
be the induced faithful representation. With the notation of $\S$
\ref{subsec:Projection-Sigma3-0} and Example \ref{exa:Projection-sigma30},
the projection \[ \pi_{\Pi}:G\dashrightarrow\bar{G}=\{\overline{w}_{12}\overline{w}_{34}-\overline{w}_{13}\overline{w}_{24}+\overline{w}_{14}\overline{w}_{23}=0\}\subset\mathbb{P}_{k}^{5}  \]from
the solid $\Pi=\sigma_{3,0}(V_{1})$ is then $\mathbb{U}$-equivariant
for the action of $\mathbb{U}$ on $\bar{G}$ factoring through the
action of $\bar{\mathbb{U}}$ determined under the isomorphism $\mathrm{Aut}_{k}(\bar{G})\cong\mathrm{PGL}_{k}(\bar{V}^{\vee})$
by the projective representation induced by $\bar{\rho}$. The $\mathbb{U}$-action
on $G$ lifts to a vector group structure on the blow-up $Y_{\Pi}\to G$
of $\Pi$ and, by Proposition \ref{prop:adrien} applied to the induced
morphism $\mathrm{p}_{\bar{G}}:Y_{\Pi}\to\bar{G}$, the $\bar{\mathbb{U}}$-action
on $\bar{G}$ defines a vector group structure on it. In particular,
$\bar{\mathbb{U}}$ is $4$-dimensional, say $\bar{\mathbb{U}}=\mathrm{Spec}(k[\overline{t}_{13},\overline{t}_{23},\overline{t}_{14},\overline{t}_{24}])$.
By \cite{Sha09}, $\bar{G}$ admits a unique vector group structure
given up to isomorphism by the projective representation associated
to the representation
\[
\bar{\mathbb{U}}\to\mathrm{GL}_{k}(\bar{V}^{\vee}),\overline{t}=(\overline{t}_{ij})_{i=1,2,j=3,4}\mapsto A(\overline{t})=\left(\begin{array}{cccc}
1 & 0 & -\overline{t}_{23} & -\overline{t}_{24}\\
0 & 1 & \overline{t}_{13} & \overline{t}_{14}\\
0 & 0 & 1 & 0\\
0 & 0 & 0 & 1
\end{array}\right).
\]
Write $\mathbb{U}'=\mathrm{Spec}(k[s_{1},s_{2}])$, $\mathbb{U}\cong\mathrm{Spec}(k[\bar{t}][s_{1},s_{2}])\cong\bar{\mathbb{U}}\times\mathbb{U}'$
and put $u=(\overline{t},s_{1},s_{2})$. Then, with Notation \ref{nota:Stabilizers-Schubert},
the representation $\rho:\mathbb{U}\to\Delta_{V_{1}}$ lifting $\bar{\rho}:\mathbb{\bar{U}}\to\mathrm{GL}_{k}(\bar{V}^{\vee})$
has the form $u\mapsto M(u)=M(A(\bar{t}),1,{}^{t}L(u))$ for some
row matrix $L(u)=(f_{i}(u))_{i=1,\ldots4}$ of elements of $k[u]$
such that $M(u+u')=M(u)M(u')=M(u)M(u')=M(u'+u)$. By direct computation,
this identity implies that $f_{3}=f_{4}=0$ and that $f_{1}$ and
$f_{2}$ are linear polynomials. Moreover, since $\rho$ is injective,
we must have $k[\bar{t},f_{1},f_{2}]=k[\bar{t},s_{1},s_{2}]$. Denoting
$\bar{t}_{ij}$, $f_{1}$ and $f_{2}$ anew by $t_{ij}$, $-t_{25}$
and $t_{15}$ then identifies $\rho$ with the representation (\ref{eq:Ga6-Rep-Grass})
in Example \ref{exa:Explicit-Toric-Structures}.
\end{proof}

\subsubsection{Proof of Theorem \ref{thm:MainTh1} for fivefolds}

The following proposition establishes the first part of the assertion
of Theorem \ref{thm:MainTh1} for $n=5$. 
\begin{prop}
\label{prop:Z5-trivial-forms}A smooth quintic del Pezzo fivefold
is isomorphic to a smooth hyperplane section of $\mathrm{G}(2,5)$
and all these sections are isomorphic. 
\end{prop}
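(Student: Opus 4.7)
The plan is to split the proposition into two parts: (i) smooth hyperplane sections of $\mathrm{G}(2,5)_{k}$ defined over $k$ exist and are all pairwise isomorphic over $k$, which reduces to the symplectic basis theorem; and (ii) every $k$-form of such a hyperplane section is isomorphic over $k$ to the trivial form, which reduces to a Galois cohomology computation. For (i), I use the correspondence from $\S$\ref{subsec:Hyperplane-Sec} between $k$-rational smooth hyperplane sections $Z_{\langle s\rangle}$ and $k$-lines $\langle s\rangle\subset\Lambda^{2}V^{\vee}$ with $\tilde{s}:V\to V^{\vee}$ of rank $4$. Such $s$ exist over any $k$ (any non-degenerate symplectic form on a $4$-dimensional $k$-vector space $\bar{V}$ pulls back through a $k$-surjection $V\twoheadrightarrow\bar{V}$). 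For mutual isomorphism: any two rank-$4$ skew-symmetric forms on $V$ are $\mathrm{GL}_{k}(V)$-equivalent, since their $1$-dimensional kernels can be aligned by $\mathrm{GL}_{k}(V)$ and the induced non-degenerate symplectic forms on the $4$-dimensional quotients are equivalent by the symplectic basis theorem.

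For (ii), by \cite{BS64} the isomorphism classes of $k$-forms of a fixed $Z=Z_{\langle s\rangle}$ correspond to elements of $H^{1}(\Gamma,\mathrm{Aut}_{\bar{k}}(Z_{\bar{k}}))$, and I will show this set is trivial. The key tool is the Levi decomposition $\mathrm{Aut}_{k}(Z)=L\ltimes U$ of $k$-group schemes, extracted from the explicit matrix description in Corollary \ref{cor:Aut-Z5}, where $U\cong\mathbb{G}_{a}^{4}$ is the unipotent radical (the $\mathrm{Hom}$-block) and $L=(\mathrm{Sp}_{k}(\bar{V}^{\vee},\bar{s})\times\mathbb{G}_{m,k})/\{\pm1\}_{\mathrm{diag}}$ is the Levi subgroup, with $\{\pm1\}$ embedded diagonally so that $(-I_{4},-1)$ is identified with the identity. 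Additive Hilbert~$90$ yields $H^{1}(k,U)=0$, and the vanishing of this (and of its twists, which remain unipotent) in a semi-direct product gives $H^{1}(k,\mathrm{Aut}(Z))\cong H^{1}(k,L)$. Applying Galois cohomology to the exact sequence $1\to\mathrm{Sp}_{4}\to L\to\mathbb{G}_{m}\to1$ given by $[A,\lambda]\mapsto\lambda^{2}$ then yields
\[
H^{1}(k,\mathrm{Sp}_{4})\to H^{1}(k,L)\to H^{1}(k,\mathbb{G}_{m}).
\]
Both extremes vanish: $H^{1}(k,\mathbb{G}_{m})=0$ by Hilbert~$90$, and $H^{1}(k,\mathrm{Sp}_{4})=0$ since $\mathrm{Sp}_{2n}$ is a special group in Serre's sense. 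Non-abelian exactness then forces $H^{1}(k,L)=0$, proving the result.

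The main delicate point will be the correct identification of the Levi $L$, in particular the diagonal embedding of $\{\pm 1\}$, which has the effect of making the projection $L\to\mathbb{G}_{m}$ be the squaring map rather than a direct projection to the toric factor. It is precisely this feature, combined with the specialness of $\mathrm{Sp}_{2n}$, that makes the exact sequence for $L$ yield vanishing; replacing $L$ by the naive quotient $\mathrm{PSp}_{4}$ would produce only an injection into $H^{2}(k,\mu_{2})=\mathrm{Br}(k)[2]$, which need not be trivial.
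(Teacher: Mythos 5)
Your proposal is correct, but it takes a genuinely different route from the paper's. The paper argues geometrically: using the uniqueness of the solid $\Pi$ (Lemma \ref{lem:Solids-and-planes-Z5}) it descends the projection from $\Pi$ to $k$, shows the resulting $k$-form of $\mathbb{P}_{k}^{4}$ is trivial because it carries a divisor class of degree $1$, descends the spinor sheaf to produce a line on the $k$-form of the quadric threefold $Q_{\langle\bar{s}\rangle}$, and then reads the Sarkisov link backwards. You instead compute $H^{1}(\Gamma,\mathrm{Aut}_{\bar{k}}(Z_{\bar{k}}))$ directly from Corollary \ref{cor:Aut-Z5}. Your reduction to the Levi quotient $L=(\mathrm{Sp}_{k}(\bar{V}^{\vee},\bar{s})\times\mathbb{G}_{m,k})/\mu_{2}^{\mathrm{diag}}$ is sound: in characteristic zero every twist of the unipotent radical is again a split unipotent group, so all fibers of $H^{1}(k,\mathrm{Aut}(Z))\to H^{1}(k,L)$ are singletons, and the Levi section gives surjectivity. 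Your choice of the sequence $1\to\mathrm{Sp}_{4}\to L\to\mathbb{G}_{m}\to1$ with quotient map $[A,\lambda]\mapsto\lambda^{2}$ --- rather than the central extension $1\to\mu_{2}\to\mathrm{Sp}_{4}\times\mathbb{G}_{m}\to L\to1$, whose coboundary only lands in $\mathrm{Br}(k)[2]$ --- is exactly the right move; the kernel of $[A,\lambda]\mapsto\lambda^{2}$ is indeed a copy of $\mathrm{Sp}_{4}$ because $[A,-1]=[-A,1]$, and both ends of the resulting cohomology sequence vanish by specialness of $\mathrm{Sp}_{2n}$ and Hilbert~90. What the paper's approach buys is uniformity with the cases $n=4,6$ and the trinodal threefolds, together with a $k$-rational Sarkisov link that is reused in the uniqueness argument of Proposition \ref{prop:Uniqueness-Z5}; what yours buys is brevity and a conceptual explanation of why no nontrivial forms exist here while codimension-$3$ sections do admit them. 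The only caveat is that your argument leans entirely on Corollary \ref{cor:Aut-Z5}, whose proof already deploys most of the geometric machinery the paper uses, so the saving is local rather than global; you should also state explicitly (as the paper does via \cite{Fuj81}) that a smooth quintic del Pezzo fivefold is by definition-plus-Fujita a $k$-form of a smooth hyperplane section, which is the input to your step~(ii).
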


\begin{proof}
Let $X$ be smooth quintic del Pezzo fivefold. By \cite{Fuj81}, $X_{\bar{k}}$
is isomorphic to a smooth hyperplane section $Z_{\langle s\rangle}$
of $G=\mathbb{G}_{\bar{k}}(V^{\vee},2)$ for some $5$-dimensional
$\bar{k}$-vector space $V$. By Lemma \ref{lem:Solids-and-planes-Z5},
$X$ contains a unique $3$-dimensional sub-scheme $\Pi$ whose base
extension $\Pi_{\bar{k}}$ to $\bar{k}$ is a $\sigma_{3,0}$-solid
of $X_{\bar{k}}$. Let $\mathcal{I}_{\Pi_{\bar{k}}}\subset\mathcal{O}_{X_{\bar{k}}}$
be the ideal sheaf of $\Pi_{\bar{k}}$, let $\mathcal{O}_{X_{\bar{k}}}(1)=\Lambda^{2}\mathcal{Q}|_{X_{\bar{k}}}$
and consider the projection 
\[
\pi_{\Pi_{\bar{k}}}:X_{\bar{k}}=Z_{\langle s\rangle}\dashrightarrow Q_{\langle\bar{s}\rangle}\subset\mathbb{P}_{\bar{k}}(H^{0}(X_{\bar{k}},\mathcal{I}_{\Pi_{\bar{k}}}(1))\cong\mathbb{P}_{\bar{k}}^{4}
\]
from $\Pi_{\bar{k}}$. Since the action of the Galois group $\Gamma=\mathrm{Gal}(\bar{k}/k)$
on $X_{\bar{k}}$ maps smooth hyperplane sections of $X_{\bar{k}}$
to smooth hyperplane sections, the projective space $\mathbb{P}_{\bar{k}}(H^{0}(X_{\bar{k}},\mathcal{I}_{\Pi_{\bar{k}}}(1))$
inherits a natural continuous linear Galois action of $\Gamma$. By
Galois descent for quasi-projective varieties and rational maps between
these, the map $\pi_{\Pi_{\bar{k}}}$ thus descends to a rational
map $\pi_{\Pi}:X\dashrightarrow Q\subset P_{4}$ whose image is a
$k$-form $Q$ of $Q_{\langle\bar{s}\rangle}$ in a $k$-form $P_{4}$
of $\mathbb{P}_{\bar{k}}^{4}$. The divisor $-K_{P_{4}}-2Q$ being
defined over $k$ and of degree $1$, $P_{4}$ is the trivial form
of $\mathbb{P}_{k}^{4}$. Let $\mathrm{p}_{X}:Y\to X$ be the blow-up
of $\Pi$. Then, by Proposition \ref{prop:Projection-Sigma3-0 },
the induced morphism $\mathrm{p}_{Q}:Y\to Q$ is an \'etale locally
trivial $\mathbb{P}^{2}$-bundle whose base extension to $\bar{k}$
is isomorphic to $\mathbb{P}_{Q_{\langle s\rangle}}(\mathscr{S}\oplus\mathcal{O}_{Q_{\langle\bar{s}\rangle}})$
where $\mathscr{S}$ denotes the spinor sheaf on $Q_{\langle\bar{s}\rangle}$.
Furthermore, the restriction $\mathrm{p}_{Q}:E\to Q$ of $\mathrm{p}_{Q}$
to the exceptional divisor of $\mathrm{p}_{X}$ is an \'etale locally
trivial $\mathbb{P}^{1}$-sub-bundle of $\mathrm{p}_{Q}:Y\to Q$,
whose base extension to $\bar{k}$ is isomorphic to the sub-bundle
$\mathbb{P}_{Q_{\langle\bar{s}\rangle}}(\mathscr{S})$ of $\mathbb{P}_{Q_{\langle\bar{s}\rangle}}(\mathscr{S}\oplus\mathcal{O}_{Q_{\langle\bar{s}\rangle}})$.
Thus, considering the direct image by $\mathrm{p}_{Q}$ of the exact
sequence $0\to\mathcal{O}_{Y}\to\mathcal{O}_{Y}(E)\to\mathcal{O}_{E}(E)\to0$
on $Y$, we conclude that $\mathrm{p}_{Q}:Y\to Q$ is isomorphic to
the $\mathbb{P}^{2}$-bundle $\mathbb{P}_{Q}(\mathcal{E})$ where
$\mathcal{E}=(\mathrm{p}_{Q})_{*}\mathcal{O}_{Y}(E)$ is a locally
free sheaf of rank $3$ on $Q$. Furthermore, $\mathscr{S}_{Q}=(\mathrm{p}_{Q})_{*}\mathcal{O}_{E}(E)$
is a locally free sheaf of rank $2$ whose base extension to $\bar{k}$
is isomorphic to the spinor sheaf $\mathcal{\mathscr{S}}$ on $Q_{\bar{k}}\cong Q_{\langle\bar{s}\rangle}$.
Since $H^{0}(Q_{\langle\bar{s}\rangle},\mathscr{S}^{\vee})\cong H^{0}(Q,\mathcal{\mathscr{S}}_{Q}^{\vee})\otimes_{k}\bar{k}$
by flat base change and since $\mathcal{\mathscr{S}}^{\vee}$ is globally
generated by sections whose zero schemes are lines of $Q_{\langle\bar{s}\rangle}\subset\mathbb{P}_{\bar{k}}^{4}$
(see e.g. \cite{Ot88}), we infer that $Q$ contains a line of $P_{4}\cong\mathbb{P}_{k}^{4}$.
A smooth quadric of $\mathbb{P}_{k}^{4}$ containing a line being
unique up to isomorphism and equal to a hyperplane section of the
smooth quadric $\mathrm{G}(2,4)\subset\mathbb{P}_{k}^{5}$, we conclude
by reading the Sarkisov link of Proposition \ref{prop:Projection-Sigma2-2}
backwards that $X$ is isomorphic to a smooth hyperplane section of
$\mathrm{G}(2,5)$ over $k$. The transitivity of the action of $\mathrm{Aut}_{k}(G)(k)$
on the set of such smooth sections, see $\S$ \ref{subsec:Hyperplane-Sec},
implies that they are all isomorphic. 
\end{proof}
Since by Proposition \ref{prop:Existence-Gan-smooth-section} every
smooth hyperplane section of $\mathrm{G}(2,5)\subset\mathbb{P}_{k}^{9}$
admits a vector group structure, the following proposition completes
the proof of Theorem \ref{thm:MainTh1} in the case $n=5$. 
\begin{prop}
\label{prop:Uniqueness-Z5} A smooth hyperplane section of $\mathrm{G}(2,5)$
has a unique class of vector group structure.
\end{prop}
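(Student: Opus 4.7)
The plan is to adapt the argument of Proposition \ref{prop:Uniqueness-Grass} with the smooth quadric fourfold $\bar{G}$ replaced by the smooth quadric threefold $Q_{\langle\bar{s}\rangle}$, using the projection from the unique solid of $Z_{\langle s\rangle}$ in place of the projection from an invariant solid. First I would observe that any vector group action $\mu:\mathbb{U}\times Z\to Z$ on $Z=Z_{\langle s\rangle}$ must preserve its unique $\sigma_{3,0}$-solid $\Pi=\sigma_{3,0}(V^{\bot})$ (Lemma \ref{lem:Solids-and-planes-Z5}), so that by Corollary \ref{cor:Aut-Z5}, and since the multiplicative factor $\mathbb{G}_{m,k}$ is killed because $\mathbb{U}$ is unipotent, the action lifts to a faithful representation $\tilde{\rho}:\mathbb{U}\to\tilde{H}$, where $\tilde{H}\subset\mathrm{GL}_{k}(V^{\vee})$ is the subgroup of block-upper-triangular matrices $M(A,1,U)$ with $A\in\mathrm{Sp}_{k}(\bar{V}^{\vee},\bar{s})$ and $U\in\mathrm{Hom}_{k}((V^{\bot})^{\vee},\bar{V}^{\vee})$.

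Next, composing $\tilde{\rho}$ with the projection onto the top-left block yields a homomorphism $\bar{\rho}:\bar{\mathbb{U}}=\mathbb{U}/\mathbb{U}'\to\mathrm{Sp}_{k}(\bar{V}^{\vee},\bar{s})$, where $\mathbb{U}'\subset\mathbb{U}$ is the kernel. By Proposition \ref{prop:Projection-Sigma3-0 }, the projection $\pi_{\Pi}:Z\dashrightarrow Q_{\langle\bar{s}\rangle}$ is resolved by a $\mathbb{U}$-equivariant Sarkisov link, and Proposition \ref{prop:adrien} applied to the induced morphism to $Q_{\langle\bar{s}\rangle}$ shows that the $\bar{\mathbb{U}}$-action on $Q_{\langle\bar{s}\rangle}$ has a dense orbit, defining a vector group structure on this smooth quadric threefold. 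By the uniqueness of vector group structures on smooth quadrics \cite{Sha09}, and using the splitting of the exact sequence in Corollary \ref{cor:Aut-Z5} to lift an element of $\mathrm{PSp}_{k}(\bar{V}^{\vee},\bar{s})$ to an automorphism of $Z$, I would then conjugate $\tilde{\rho}$ so that $\bar{\rho}$ coincides with the representation $\bar{\rho}_{5}$ of equation (\ref{eq:Induced-Rep-Symplectic}); in particular $\bar{\mathbb{U}}\cong\mathbb{G}_{a,k}^{3}$.

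Finally, the extension $0\to\mathbb{U}'\to\mathbb{U}\to\bar{\mathbb{U}}\to0$ splits by Lemma \ref{lem:Unipotent-basic-prop}(2), so that $\mathbb{U}\cong\bar{\mathbb{U}}\times\mathbb{U}'$. Writing $\mathbb{U}'=\mathrm{Spec}(k[s_{1},s_{2}])$ and $u=(\bar{t},s_{1},s_{2})$, the representation takes the form $\tilde{\rho}(u)=M(A(\bar{t}),1,L(u))$ for a column $L(u)={}^{t}(f_{1}(u),\ldots,f_{4}(u))$. The identity $\tilde{\rho}(u+u')=\tilde{\rho}(u)\tilde{\rho}(u')$ imposes the cocycle relation
\[
L(u+u')=A(\bar{t})L(u')+L(u)=A(\bar{t}')L(u)+L(u'),
\]
and a direct monomial-by-monomial comparison forces $f_{3}=f_{4}=0$ and $f_{1},f_{2}$ to be $k$-linear in $u$. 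A further inner conjugation by a translation $M(I,1,L_{0})\in\tilde{H}$ then eliminates the $\bar{t}$-linear summands of $f_{1},f_{2}$, while a change of $k$-basis of $\mathbb{U}'$, made possible by the injectivity of $\tilde{\rho}$, identifies $\tilde{\rho}$ with the representation $\rho_{5}$ of equation (\ref{eq:Ga5-Rep-Z5}), proving the claimed uniqueness.

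The main obstacle is the explicit coefficient-matching in the cocycle identity above, used to rule out the bilinear contributions in the last two rows of $L(u)$ coming from terms of the form $\bar{t}_{ij}\bar{t}_{i'j'}$ and $\bar{t}_{ij}s_{\ell}$. Once this vanishing is established, the remaining normalization is a direct analogue of the corresponding calculation in the proof of Proposition \ref{prop:Uniqueness-Grass}, with the additional constraint that the top-left block stays inside the symplectic group $\mathrm{Sp}_{k}(\bar{V}^{\vee},\bar{s})$.
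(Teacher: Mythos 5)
Your proposal is correct and follows essentially the same route as the paper's proof: reduce via the unique solid $\Pi$ and Corollary \ref{cor:Aut-Z5} to a faithful representation into the block-triangular subgroup with symplectic top-left block, push down along $\pi_{\Pi}$ to the quadric threefold, invoke \cite{Sha09} to normalize $\bar{\rho}$ to $\bar{\rho}_{5}$, and finish with the cocycle computation forcing $f_{3}=f_{4}=0$ and linearity of $f_{1},f_{2}$. The only cosmetic difference is that the paper absorbs the residual $\bar{t}$-linear terms of $f_{1},f_{2}$ purely by the change of coordinates on $\mathbb{U}$ guaranteed by $k[\bar{t},f_{1},f_{2}]=k[\bar{t},s_{1},s_{2}]$, whereas your extra inner conjugation by $M(I,1,L_{0})$ is not needed (and by itself would not reach all $\bar{t}$-linear terms); the reparametrization step you also invoke already does the job.
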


\begin{proof}
Write $\mathrm{G}(2,5)=\mathbb{G}_{k}(V^{\vee},2)=G$ for some $5$-dimensional
$k$-vector space $V$. The action of $\mathrm{Aut}_{k}(G)$ on the
set of smooth hyperplane sections being transitive, we are reduced
without loss of generality to prove that the smooth hyperplane section
$Z_{\langle s\rangle}$ associated to the skew-symmetric form $s=e_{1}^{\vee}\wedge e_{3}^{\vee}-e_{2}^{\vee}\wedge e_{4}^{\vee}$
with $V^{\bot}=\langle e_{5}\rangle$ considered in Example \ref{exa:Explicit-Toric-Structures}
admits a unique class of vector group structure. Under the isomorphism
of Corollary \ref{cor:Aut-Z5}, a vector group structure on $Z_{\langle s\rangle}$
is given by a certain faithful representation $\rho:\mathbb{U}\to\mathrm{GL}_{k}(V^{\vee})$
of a vector group $\mathbb{U}$ with image contained in the subgroup
of the stabilizer $\Delta_{V^{\bot}}$ of the subspace $\bar{V}^{\vee}\subset V^{\vee}$
consisting of matrices of the form
\[
M(A_{4},\lambda,U)=\left(\begin{array}{cc}
A_{4} & U\\
0 & \lambda
\end{array}\right)\textrm{ with }A_{4}\in\mathrm{Sp}_{4}(\bar{V}^{\vee},\bar{s}),\,U\in\mathrm{Hom}_{k}((V^{\bot})^{\vee},\bar{V}^{\vee}),\,\lambda\in\mathrm{GL}_{k}((V^{\bot})^{\vee})=\mathbb{G}_{m,k}.
\]
Let $\mathbb{U}'$ be the kernel of the induced representation $\mathbb{U}\to\mathrm{Sp}_{4}(\bar{V}^{\vee},\bar{s})$
and let $\bar{\rho}:\bar{\mathbb{U}}=\mathbb{U}/\mathbb{U}'\to\mathrm{Sp}_{4}(\bar{V}^{\vee},\bar{s})$
be the induced injective homomorphism. With the notation of $\S$
\ref{subsec:Projection-Sigma3-0} and Example \ref{exa:Projection-sigma30},
the projection \[ \pi_{\Pi}:Z_{\langle s\rangle} \dashrightarrow Q_{\langle\bar{s}\rangle}=\{\overline{w}_{12}\overline{w}_{34}-\overline{w}_{13}^{2}+\overline{w}_{14}\overline{w}_{23}=0\} \]from
the solid $\Pi=\sigma_{3,0}(V^{\bot})$ is then $\mathbb{U}$-equivariant
for the action of $\mathbb{U}$ on $Q_{\langle\bar{s}\rangle}$ factoring
through the action of $\bar{\mathbb{U}}$ determined under the isomorphism
$\mathrm{Aut}_{k}(Q_{\langle\bar{s}\rangle})\cong\mathrm{PSp}_{k}(\bar{V}^{\vee},\bar{s})$
by the projective representation induced by $\bar{\rho}$. The $\mathbb{U}$-action
on $Z_{\langle s\rangle}$ lifts to a vector group structure on the
blow-up $Y_{\Pi}\to Z_{\langle s\rangle}$ of $\Pi$ and, by Proposition
\ref{prop:adrien} applied to the induced morphism $\mathrm{p}_{Q_{\langle\bar{s}\rangle}}:Y_{\Pi}\to Q_{\langle\bar{s}\rangle}$,
the $\bar{\mathbb{U}}$-action on $Q_{\langle\bar{s}\rangle}$ defines
a vector group structure on it. In particular, $\bar{\mathbb{U}}$
is $3$-dimensional, say $\bar{\mathbb{U}}=\mathrm{Spec}(k[\overline{t}_{13},\overline{t}_{23},\overline{t}_{24}])$.
By \cite{Sha09}, $Q_{\langle\bar{s}\rangle}$ admits a unique vector
group structure up to isomorphism, which, with our choice of coordinates,
is induced by the representation $\bar{\rho}_{5}:\mathbb{V}_{k}(F_{5}^{\vee})/\mathrm{Stab}(\langle e_{5}^{\vee}\rangle)\to\mathrm{Sp}_{4}(\bar{V}^{\vee},\bar{s})$
of (\ref{eq:Induced-Rep-Symplectic}) in Example \ref{exa:Explicit-Toric-Structures}.
Writing $\mathbb{U}'=\mathrm{Spec}(k[s_{1},s_{2}])$, $\mathbb{U}\cong\mathrm{Spec}(k[\bar{t}][s_{1},s_{2}])\cong\bar{\mathbb{U}}\times\mathbb{U}'$
and $u=(\overline{t},s_{1},s_{2})$, the same argument as in the proof
of Proposition \ref{prop:Uniqueness-Grass} implies that the lift
$\rho:\mathbb{U}\to\Delta_{V^{\bot}}\subset\mathrm{GL}_{k}(V^{\vee})$
of $\bar{\rho}:\bar{\mathbb{U}}\to\mathrm{Sp}_{4}(\bar{V}^{\vee},\bar{s})$
has the form $u\mapsto M(u)=M(A(\bar{t}),1,{}^{t}L(s))$ where $L(s)=(f_{1}(s),f_{2}(s),0,0)$
is a row matrix of linear elements of $k[s]$ with the property that
$k[\bar{t},f_{1},f_{2}]=k[\bar{t},s_{1},s_{2}]$. Denoting the $\bar{t}_{ij}$,
$f_{1}$ and $f_{2}$ anew by $t_{ij}$, $-t_{25}$ and $t_{15}$
respectively, we get that $\rho:\mathbb{U}\to\mathrm{GL}_{k}(V^{\vee})$
is the representation (\ref{eq:Ga5-Rep-Z5}) of Example \ref{exa:Explicit-Toric-Structures}. 
\end{proof}

\subsubsection{Proof of Theorem \ref{thm:MainTh1} for fourfolds}

The following proposition completes the proof of Theorem \ref{thm:MainTh1}.
\begin{prop}
\label{prop:Uniqueness-W5}A smooth quintic del Pezzo fourfold is
isomorphic to a smooth section of $\mathrm{G}(2,5)\subset\mathbb{P}_{k}^{9}$
by a linear subspace of codimension $2$. Furthermore, all such sections
are isomorphic and admit exactly one class of vector group structure.
\end{prop}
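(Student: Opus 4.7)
The plan is to follow the pattern of Propositions \ref{prop:Z5-trivial-forms} and \ref{prop:Uniqueness-Z5}, using the uniqueness of the $\sigma_{2,2}$-plane in a smooth codimension $2$ linear section (Lemma \ref{lem:Unique-plane-W5}) and the Sarkisov link of Proposition \ref{prop:Projection-Sigma2-2} as the main tools. For the first assertion, I would start from a smooth quintic del Pezzo fourfold $X$: by \cite{Fuj81}, $X_{\bar{k}}$ is isomorphic to a smooth codimension $2$ linear section $W_{L,\bar{k}}$ of $\mathrm{G}(2,5)_{\bar{k}}$, and by Lemma \ref{lem:Unique-plane-W5} it contains a unique $\sigma_{2,2}$-plane. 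Galois invariance of this plane yields a closed $k$-subscheme $\Xi \subset X$, and the projection $\pi_{\Xi} : X \dashrightarrow P_4 := \mathbb{P}_k(H^0(X, \mathcal{I}_{\Xi}(1)))$ descends the link of Proposition \ref{prop:Projection-Sigma2-2}: $P_4$ is a $k$-form of $\mathbb{P}_{\bar{k}}^4$ containing a $k$-form $S$ of the rational normal cubic $S_{W_L,\bar{k}}$. Since $S$ is a closed subscheme of $P_4$ of degree $3$, the Brauer--Severi variety $P_4$ has index dividing $\gcd(3,5)=1$; hence $P_4 \cong \mathbb{P}_k^4$, and since $\mathcal{O}_{\mathbb{P}_k^4}(1)|_S$ is an invertible sheaf of odd degree on $S$, we have $S \cong \mathbb{P}_k^1$. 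Reading the link backwards, $X$ is recovered as the image of $\Phi_4 : \mathbb{P}_k^4 \dashrightarrow X$ given by the complete linear system of quadrics containing $S$, and transitivity of $\mathrm{PGL}_5(k)$ on rational normal cubics of $\mathbb{P}_k^4$ (see $\S$\ref{subsec:Codim2-Sec}) then implies that all such $X$ are isomorphic to the standard $W_L$ of Example \ref{exa:Projection-sigma22}. Existence of a vector group structure on $W_L$ is provided by Proposition \ref{prop:Existence-Gan-smooth-section}.

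For uniqueness up to equivalence, let $\mathbb{U}$ be a vector group acting effectively on $W_L$ with dense open orbit. By Lemma \ref{lem:open-orbit} the open orbit is a trivial $\mathbb{U}$-torsor of dimension $4$, so $\mathbb{U} \cong \mathbb{G}_{a,k}^4$. Corollary \ref{cor:Aut-W5} identifies $\mathrm{Aut}_k(W_L)$ with $\mathbb{G}_{a,k}^4 \rtimes (\mathbb{G}_{m,k} \times \mathrm{PGL}_k(L))$, where, via the link $\pi_{\Xi}$, the unipotent radical $\mathbb{G}_{a,k}^4$ is the translation subgroup of the affine chart $\mathbf{P}_4 \setminus H_{W_L}$, $\mathbb{G}_{m,k}$ acts on it by scalar multiplication, and $\mathrm{PGL}_k(L) \cong \mathrm{PGL}_{2,k}$ acts through the symmetric cube of its standard representation, since it stabilizes the rational normal cubic $S_{W_L} \subset H_{W_L} \cong \mathbb{P}_k(V_{3,L})$. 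Let $b : \mathbb{U} \to \mathrm{PGL}_k(L)$ denote the composition with the projection to the $\mathrm{PGL}_k(L)$-factor. Its image is a unipotent subgroup, hence either trivial or a $\mathbb{G}_{a,k}$. To rule out the nontrivial case, observe that $\ker b$ would then be a $3$-dimensional vector subgroup of $\mathbb{G}_{a,k}^4 \rtimes \mathbb{G}_{m,k}$, necessarily contained in $\mathbb{G}_{a,k}^4$ since $\mathbb{G}_{m,k}$ has no unipotent subgroup. Splitting the extension $0 \to \ker b \to \mathbb{U} \to \mathbb{G}_{a,k} \to 0$ via Lemma \ref{lem:Unipotent-basic-prop}(2) would give a lift $\sigma : \mathbb{G}_{a,k} \to \mathbb{U}$ centralizing $\ker b$, and since any homomorphism $\mathbb{G}_{a,k} \to \mathbb{G}_{m,k}$ is trivial, a direct semidirect-product computation shows that conjugation by $\sigma(\mathbb{G}_{a,k})$ on elements of $\mathbb{G}_{a,k}^4$ coincides with the $\mathrm{PGL}_k(L)$-action of its image. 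As the fixed subspace of a principal unipotent of $\mathrm{PGL}_{2,k}$ in $\mathrm{Sym}^3$ of the standard representation is one-dimensional, this forces $\dim \ker b \leq 1$, a contradiction. Hence $b$ is trivial and $\mathbb{U}$ coincides with the unipotent radical $\mathbb{G}_{a,k}^4$ of $\mathrm{Aut}_k(W_L, \Xi_L)_0$, which is intrinsic; any two vector group structures on $W_L$ therefore have the same image in $\mathrm{Aut}_k(W_L)$ and differ by an automorphism of $\mathbb{G}_{a,k}^4$, giving an equivalence.

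The most delicate step will be the geometric identification, via the link of Proposition \ref{prop:Projection-Sigma2-2}, of the conjugation action of $\mathrm{PGL}_k(L)$ on the unipotent radical $\mathbb{G}_{a,k}^4 \subset \mathrm{Aut}_k(W_L, \Xi_L)_0$ with the symmetric cube of the standard $\mathrm{SL}_{2,k}$-representation; this rigidity is precisely what allows the representation-theoretic fixed-point count to exclude nontrivial image in $\mathrm{PGL}_k(L)$ and hence to pin down the vector group structure uniquely.
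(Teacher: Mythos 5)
Your argument is essentially correct, and the two halves deserve separate comments. For the first assertion you follow the paper's route (descend the unique $\sigma_{2,2}$-plane, project, trivialize the target, read the Sarkisov link backwards), but your justification that $P_4$ is trivial is the one shaky point: the claim that a closed curve of degree $3$ in a Severi--Brauer fourfold forces the index to divide $\gcd(3,5)$ is not a standard fact as stated (elementary intersection with $\omega^{-1}$ only gives divisibility of a zero-cycle of degree $3\cdot 5$, which is useless here). It is easily repaired: either note, as the paper does, that the special hyperplane section $Z_{\Xi_{\bar k}}\cap X_{\bar k}$ is Galois-stable, so the whole triple $(\mathbb{P}^4,\mathbb{P}^3,C_3)$ descends and the degree-one divisor $P_3\subset P_4$ trivializes $P_4$; or observe that $S$ is a form of $\mathbb{P}^1$, hence a conic carrying a closed point of degree dividing $2$, so the index of $P_4$ divides $\gcd(2,5)=1$. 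With that repair the rest of the first half (including $C\cong\mathbb{P}^1_k$ from the odd-degree line bundle and the transitivity of $\mathrm{PGL}_5(k)$ on triples) matches the paper.

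For uniqueness of the vector group structure you take a genuinely different route. The paper lifts the $\mathbb{U}$-action to the blow-up of $\Xi$, pushes it down to $\mathbb{P}^4_k$ via Proposition \ref{prop:adrien}, and invokes the classification of vector group structures on $\mathbb{P}^4_k$ stabilizing the pair $(\mathbb{P}^3_k,C)$ from \cite{HT99} and \cite{HM20}. You instead work directly inside $\mathrm{Aut}_k(W_L)$ as computed in Corollary \ref{cor:Aut-W5}: a $4$-dimensional vector subgroup must project trivially to $\mathrm{PGL}_k(L)$, because otherwise its $3$-dimensional kernel would sit in the centralizer of a nontrivial unipotent of $\mathrm{PGL}_k(L)$ acting on $\mathbb{G}_{a,k}^4\cong\mathrm{Sym}^3(\mathrm{std})$, which is only $1$-dimensional (a single Jordan block); hence $\mathbb{U}$ equals the unipotent radical and any two structures coincide as subgroups of $\mathrm{Aut}_k(W_L)$. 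The semidirect-product conjugation computation and the identification of the Levi action with $\mathrm{Sym}^3$ via the stabilizer of the twisted cubic $S_{W_L}\subset H_{W_L}$ are both correct and follow from the proof of Corollary \ref{cor:Aut-W5}. What your approach buys is independence from the classification of additive structures on $\mathbb{P}^4_k$, at the cost of relying on the precise group-theoretic structure of $\mathrm{Aut}_k(W_L)$; what the paper's approach buys is brevity, since the needed uniqueness statement on $\mathbb{P}^4_k$ is already in the literature. Both are valid proofs.
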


\begin{proof}
Let $X$ be smooth quintic del Pezzo fourfold. By \cite{Fuj81}, we
can assume that $X_{\bar{k}}$ is a smooth section $W_{L}$ of $\mathrm{G}(2,5)_{\bar{k}}$
by a linear subspace of codimension $2$. By Lemma \ref{lem:Unique-plane-W5},
$X$ contains a unique $2$-dimensional sub-scheme $\Xi$ whose base
extension $\Xi_{\bar{k}}$ to $\bar{k}$ is a $\sigma_{2,2}$-plane
of $W_{L}$. Let $\mathcal{I}_{\Xi_{\bar{k}}}\subset\mathcal{O}_{X_{\bar{k}}}$
be the ideal sheaf of $\Xi_{\bar{k}}$, let $\mathcal{O}_{X_{\bar{k}}}(1)=\Lambda^{2}\mathcal{Q}|_{X_{\bar{k}}}$
and consider the projection 
\[
\pi_{\Xi_{\bar{k}}}:X_{\bar{k}}=W_{L}\dashrightarrow\mathbb{P}_{\bar{k}}(H^{0}(X_{\bar{k}},\mathcal{I}_{\Xi_{\bar{k}}}(1))\cong\mathbb{P}_{\bar{k}}^{4}
\]
from $\Xi_{\bar{k}}$ as in $\S$ \ref{subsec:Projection-Sigma2-2}.
The projective space $\mathbb{P}_{\bar{k}}(H^{0}(X_{\bar{k}},\mathcal{I}_{\Xi_{\bar{k}}}(1))$
inherits a natural continuous linear action of the Galois group $\Gamma=\mathrm{Gal}(\bar{k}/k)$,
which stabilizes the hyperplane corresponding to the unique section
of $\mathcal{I}_{\Xi_{\bar{k}}}(1)$ whose zero scheme is the special
hyperplane section $Z_{\Xi_{\bar{k}}}\cap X_{\bar{k}}$ of $X_{\bar{k}}$
spanned by the $\sigma_{3,1}$-planes of $X_{\bar{k}}$. In particular,
the divisor $Z_{\Xi_{\bar{k}}}\cap X_{\bar{k}}$ is defined over $k$,
say $Z_{\Xi_{\bar{k}}}\cap X_{\bar{k}}=(Z_{\Xi})_{\bar{k}}$ for some
geometrically irreducible divisor $Z_{\Xi}\subset X$ containing $\Xi$.
In view of Proposition \ref{prop:Projection-Sigma2-2}, the projection
$\pi_{\Xi_{\bar{k}}}$ thus descends to a birational map $\pi_{\Xi}:X\dashrightarrow P_{4}$,
with image equal to a $k$-form $P_{4}$ of $\mathbb{P}_{k}^{4}$,
which contract $Z_{\Xi}$ onto a smooth curve $C$ contained in a
hypersurface $P_{3}\subset P_{4}$ such that the triple $(P_{4},P_{3},C)$
is a $k$-form of the triple $(\mathbb{P}_{k}^{4},\mathbb{P}_{k}^{3},C_{3})$,
where $\mathbb{P}_{k}^{3}$ is a hyperplane of $\mathbb{P}_{k}^{4}$
and $C_{3}\subset\mathbb{P}_{k}^{3}$ is a smooth rational cubic curve.
Thus, $P_{4}$ and $P_{3}$ are trivial forms of $\mathbb{P}_{k}^{4}$
and $\mathbb{P}_{k}^{3}$ respectively, and, since the intersection
of $C$ with a hyperplane of $\mathbb{P}_{k}^{3}$ is a divisor of
degree $3$ on $C$, it follows that $C\cong\mathbb{P}_{k}^{1}$.
Reading the Sarkisov link of Proposition \ref{prop:Projection-Sigma2-2}
backwards, we conclude that $X$ is isomorphic to a smooth section
of $\mathrm{G}(2,5)$ by a linear subspace of codimension $2$, and
the uniqueness up to isomorphism follows from the transitivity of
the action $\mathrm{PGL}_{5}(k)$ on triples $(\mathbb{P}_{k}^{4},\mathbb{P}_{k}^{3},C_{3})$
where $C_{3}\subset\mathbb{P}_{k}^{3}$ is a smooth rational cubic.
Proposition \ref{prop:Existence-Gan-smooth-section} implies that
$X$ admits a vector group structure. To show the uniqueness, we observe
that being unique, the $\sigma_{2,2}$-plane $\Xi$ is stable under
any faithful action of a unipotent abelian group $\mathbb{U}$ on
$X$ defining a vector group structure on $X$. This vector group
structure lifts to the blow-up $\mathrm{p}_{\Xi}:Y\to X$ of $\Xi$
and then, by Proposition \ref{prop:adrien}, descends via the contraction
$\mathrm{p}_{2}:Y\to\mathbb{P}_{k}^{4}$ of the proper transform of
$Z_{\Xi}$ to a faithful $\mathbb{U}$-action on $\mathbb{P}_{k}^{4}$
defining a vector group structure, for which the pair $(\mathbb{P}_{k}^{3},C)$
is globally $\mathbb{U}$-stable. By the classification of vector
group structures on $\mathbb{P}_{k}^{4}$, see \cite{HT99} or \cite[Corollary 3.6]{HM20},
the unique class of vector group structure with this property is the
toric $\mathbb{G}_{a}^{4}$-structure on $\mathbb{P}_{k}^{4}$ described
in Example \ref{exa:Explicit-Toric-Structures}. 
\end{proof}
\begin{rem}
A by-product of Proposition \ref{prop:Uniqueness-W5} is that among
the four non-isomorphic compactifications of $\mathbb{A}_{\mathbb{C}}^{4}$
into the smooth quintic del Pezzo fourfold classified by Prokhorov
\cite[Theorem 3.1]{Pr94}, only the case (i) can be endowed with a
vector group structure making it an\emph{ equivariant} compactification
of $\mathbb{G}_{a,\mathbb{C}}^{4}$. 
\end{rem}

\begin{rem}
By \cite[$\S$ 2.2]{DK5}, smooth sections of $\mathrm{G}(2,5)\subset\mathbb{P}_{k}^{9}$
by linear subspaces of codimension $3$ have in general non-trivial
$k$-forms, whose isomorphism classes are parametrized by equivalence
classes of non-degenerate ternary quadratic forms over $k$. In contrast,
Proposition \ref{prop:Z5-trivial-forms} and Proposition \ref{prop:Uniqueness-W5}
imply that smooth del Pezzo quintics of dimension $n=4,5$ do not
have non-trivial $k$-forms. Since these are compactifications of
$\mathbb{A}_{k}^{n}$, one deduces from the techniques in \emph{loc.
cit.} that a proper morphism $f:X\to Y$ between normal varieties
over an algebraically closed field of characteristic zero whose general
closed fibers are smooth quintic del Pezzo varieties of dimension
$n=4,5$ contains a vertical $\mathbb{A}^{n}$-cylinder in the sense
of \cite{DK5}. Similarly, a proper morphism $f:X\to Y$ whose general
closed fibers are isomorphic to $\mathrm{G}(2,5)$ and which has a
rational section contains a vertical $\mathbb{A}^{6}$-cylinder. 
\end{rem}

\section{Vector group structures on terminal quintic del pezzo threefolds
and canonical quintic del Pezzo surfaces}

Classifying vector group structures on all del Pezzo quintics, including
singular ones, is a challenging problem. For instance, many families
of singular del Pezzo quintics of dimension $n=3,4,5$ endowed with
a vector group structure can be constructed as hyperplanes sections
of $G=\mathrm{G}(2,5)$ with respect to the Pl\"ucker embedding containing
a $\sigma_{2,2}$-plane $\Xi$ of $G$ (see $\S$ \ref{subsec:Projection-Sigma2-2}
for the notation) which is fixed by the $\mathbb{G}_{a,k}^{6}$-structure
on $G$ corresponding under the birational projection $\pi_{\Xi}:G=\mathrm{G}(2,5)\dashrightarrow\mathbb{P}_{k}^{6}=\mathbb{P}(H^{0}(G,\mathcal{I}_{\Xi}(1)))$
to the toric $\mathbb{G}_{a,k}^{6}$-structure on $\mathbb{P}_{k}^{6}$
described in Example \ref{exa:Explicit-Toric-Structures}. Namely,
every linear subspace $L$ of codimension $m=1,2,3$ of $\mathbb{P}_{k}^{6}$
which intersects the complement $H_{\infty}=\{w_{12}=0\}$ of the
open orbit transversely has stabilizer isomorphic to $\mathbb{G}_{a,k}^{m}$
and the induced effective action of the quotient group $\mathbb{G}_{a,k}^{6-m}$
on $L$ defines a vector group structure on it. The proper transform
of $L$ by $\pi_{\Xi}$ is then a linear section $X_{L}$ of $G$
of dimension $6-m$, singular in general, endowed with a vector structure
for which the inclusion $X_{L}\hookrightarrow G$ is equivariant for
the induced action of a subgroup $\mathbb{G}_{a,k}^{6-m}$ of the
group $\mathbb{G}_{a,k}^{6}$ which acts with an open orbit on $G$. 

Here, we consider mildly singular del Pezzo quintics of dimension
$3$ and $2$, a case of special interest due to the fact that no
smooth del Pezzo quintics in these dimensions admit a vector group
structure. \vspace{-0.5em}

\subsection{Vector group structures on terminal quintic del Pezzo threefolds}

Over algebraically closed fields of characteristic zero, non-smooth
terminal quintic del Pezzo threefolds are classified in \cite{Pr13}.
By Corollary 5.3 in \emph{loc. cit.},\emph{ }the main invariant of
such a threefold $X$ is the rank $r(X)={\rm rk}_{{\mathbb{Z}}}{\rm Cl}(X)$
of its divisor class group, which is either $2$, $3$ or $4$. Furthermore,
all the singularities of $X$ are ordinary double points and, by \cite[Corollary 8.3.1]{Pr13},
the number of such nodes equals $r(X)-1$. 
\begin{lem}
A terminal quintic del Pezzo threefold $X$ over $k$ whose base extension
$X_{\bar{k}}$ to $\bar{k}$ has one or two nodes does not admit a
vector group structure. 
\end{lem}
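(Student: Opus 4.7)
The strategy is to assume such a vector group structure exists and derive a contradiction by descending the action through a canonical projection. Suppose $X$ admits a vector group structure given by a faithful action $\mu: \mathbb{U} \times X \to X$ with $\mathbb{U} \cong \mathbb{G}_{a,k}^{3}$. Passing to $\bar{k}$, the set $\mathrm{Sing}(X_{\bar{k}})$ is $\mathbb{U}_{\bar{k}}$-stable; since $\mathbb{U}_{\bar{k}}$ is connected and the singular points are isolated, each of the one or two ordinary double points of $X_{\bar{k}}$ is $\mathbb{U}_{\bar{k}}$-fixed.

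By Prokhorov's classification \cite{Pr13}, $X_{\bar{k}}$ is isomorphic to a codimension $3$ linear section $W_{L}\subset \mathrm{G}(2,5)_{\bar{k}}$ determined by a $3$-dimensional subspace $L\subset \Lambda^{2}V^{\vee}_{\bar{k}}$. Arguing along the lines of Lemma \ref{lem:Unique-plane-W5}, each rank-$2$ skew form $s\in L$ yields a $\sigma_{2,2}$-plane $\sigma_{2,2}(\mathrm{Ker}\,\tilde{s})$ contained in $W_{L}$ and passing through the ordinary double point that it produces; in particular $X_{\bar{k}}$ contains at least one $\sigma_{2,2}$-plane $\Xi$. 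Because the family of such planes is intrinsically determined by the singular locus of $X_{\bar{k}}$, it is Galois-stable and $\mathbb{U}$-stable, and (by the uniqueness argument of Lemma \ref{lem:Unique-plane-W5} applied to each node separately) $\Xi$ itself is defined over $k$ and $\mathbb{U}$-invariant.

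Extending the Sarkisov link of Proposition \ref{prop:Projection-Sigma2-2} to the codimension $3$ case, the projection $\pi_{\Xi}: X\dashrightarrow \mathbb{P}_{k}^{3}$ from $\Xi$ fits into a birational diagram whose $\mathbb{P}_{k}^{3}$-side involves a length-$3$ zero-dimensional subscheme $S_{X_{3}}\subset H_{X_{3}}\subset \mathbb{P}_{k}^{3}$ supported on a smooth rational normal cubic curve $C\cong \mathbb{P}^{1}_{k}$, by the pattern established in Example \ref{exa:Projection-sigma22} (where for $i=4,5,6$ the locus $S_{X_{i}}$ drops from $\mathbb{P}^{1}\times\mathbb{P}^{2}$ to $\mathbb{F}_{1}$ to a twisted cubic). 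By Proposition \ref{prop:adrien} applied to the resolved morphism, the $\mathbb{U}$-action descends to a vector group structure on $\mathbb{P}_{k}^{3}$, which by the Hassett--Tschinkel classification \cite{HT99} must coincide with the toric $\mathbb{G}_{a,k}^{3}$-structure whose open orbit is $\mathbb{P}_{k}^{3}\setminus H_{X_{3}}$, and $S_{X_{3}}$ is $\mathbb{U}$-stable. The reduced $\bar{k}$-points of $S_{X_{3}}$ biject with the nodes of $X_{\bar{k}}$, so under our hypothesis $S_{X_{3}}$ is non-reduced. A direct inspection of the inverse Sarkisov link, defined by the linear system of quadrics in $\mathbb{P}_{k}^{3}$ passing through $S_{X_{3}}$, then shows that a non-reduced $S_{X_{3}}$ forces singularities on the reconstructed threefold that are either non-isolated or isolated but strictly worse than ordinary double points, contradicting the terminality hypothesis on $X$. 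The principal technical obstacle is making Proposition \ref{prop:Projection-Sigma2-2} precise in the $i=3$ setting and carrying out the local computation relating the scheme-theoretic structure of $S_{X_{3}}\subset C$ to the singularity types of $X$.
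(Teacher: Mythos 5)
There is a genuine gap: the entire argument rests on steps that are neither justified nor true. You assume that $X_{\bar{k}}$ contains a $\sigma_{2,2}$-plane $\Xi$, that the projection from $\Xi$ realizes $X$ birationally as $\mathbb{P}_k^3$ blown up along a length-$3$ subscheme $S_{X_3}$ of a twisted cubic, and that the reduced points of $S_{X_3}$ biject with the nodes of $X_{\bar{k}}$. This is the structure of the \emph{trinodal} case (Proposition \ref{prop:trinodal-link-sigma_2,2}), which is established there only via Prokhorov's explicit description of that particular threefold; nothing transfers it to the one- and two-node cases. In fact it cannot: by \cite{JP08,Pr13}, the small $\mathbb{Q}$-factorialization of the $1$-nodal threefold is a $\mathbb{P}^1$-bundle $\mathbb{P}_{\mathbb{P}^2_{\bar k}}(\mathcal{E})$ of Picard rank $2$ whose only two extremal contractions are the bundle projection and the small contraction to $X_{\bar k}$, so there is no divisorial contraction to $\mathbb{P}^3$ at all; the $2$-nodal case is a point blow-up of the flag threefold $\mathbb{P}_{\mathbb{P}^2}(\mathcal{T}_{\mathbb{P}^2}(-1))$ and likewise does not carry your link. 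A telltale sign that something is wrong is that your contradiction in the last step (``a non-reduced $S_{X_3}$ forces worse-than-nodal singularities'') makes no use of the vector group action: if your intermediate claims held, they would prove that $1$- and $2$-nodal terminal quintic del Pezzo threefolds do not exist, which is false. There is also a local error: for a rank-$2$ form $s\in L$, the plane $\sigma_{2,2}(\mathrm{Ker}\,\tilde{s})$ lies in $W_L$ only if $\mathrm{Ker}\,\tilde{s}$ is isotropic for \emph{every} form in $L$, not just for $s$, so even the existence of $\Xi$ does not follow from your argument.

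For comparison, the paper's proof avoids all of this by lifting the putative vector group structure to the small $\mathbb{Q}$-factorialization $\hat{X}\to X_{\bar k}$ and invoking the classification just cited: in the $1$-node case $\hat{X}\cong\mathbb{P}_{\mathbb{P}^2}(\mathcal{E})$ with $\mathcal{E}$ stable, hence simple, contradicting Corollary \ref{cor:Simple-no-struct}; in the $2$-node case one first descends the structure through the point blow-up via Proposition \ref{prop:adrien} to $\mathbb{P}_{\mathbb{P}^2}(\mathcal{T}_{\mathbb{P}^2}(-1))$ and applies the same corollary to the simple sheaf $\mathcal{T}_{\mathbb{P}^2}(-1)$. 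If you want to repair your approach you would have to replace the Sarkisov link from a $\sigma_{2,2}$-plane by the actual birational models of these threefolds, which is exactly what the paper does.
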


\begin{proof}
Since it is enough to show that $X_{\bar{k}}$ does not admit any
vector group structure, we henceforth assume that $k=\bar{k}$. Assume
that $X$ has a vector group structure. Then the latter lifts to a
vector group structure on a small $\mathbb{Q}$-factorialization $\xi:\hat{X}\to X$
of $X$. If $X$ has a unique node then, by \cite[Theorem 3.6]{JP08}
or \cite[Theorem 5.2]{Pr13}, $\hat{X}$ is isomorphic to the total
space of a $\mathbb{P}^{1}$-bundle $\pi:\hat{X}\cong\mathbb{P}_{\mathbb{P}_{k}^{2}}(\mathcal{E})\to\mathbb{P}_{k}^{2}$
for some stable, hence simple, locally free sheaf $\mathcal{E}$ of
rank $2$ on $\mathbb{P}_{k}^{2}$. We would thus obtain a vector
group structure on $\mathbb{P}_{\mathbb{P}_{k}^{2}}(\mathcal{E})$
which is impossible by Corollary \ref{cor:Simple-no-struct}. If $X$
has two nodes then, by \cite[Theorem 8.1 and Corollary 8.1.1]{Pr13},
$\hat{X}$ is isomorphic to the blow-up $\sigma:\hat{X}\to\tilde{X}$
of the total space of the projective bundle $\pi:\tilde{X}=\mathbb{P}_{\mathbb{P}_{k}^{2}}(\mathcal{T}_{\mathbb{P}_{k}^{2}}(-1))\to\mathbb{P}_{k}^{2}$
at a point. By Proposition \ref{prop:adrien}, the vector group structure
on $\hat{X}$ descends to a vector group structure on $\tilde{X}$.
But again, this is impossible since $\mathcal{T}_{\mathbb{P}_{k}^{2}}(-1)$
is a simple sheaf. 
\end{proof}
We now consider the case of terminal quintic del Pezzo threefolds
whose base extensions to $\bar{k}$ possess exactly three nodes, which
we henceforth call for short \emph{trinodal quintic del Pezzo threefolds}. 
\begin{prop}
\label{prop:trinodal-link-sigma_2,2}A trinodal quintic del Pezzo
threefold $X_{3}$ is isomorphic to a section of $\mathrm{G}(2,5)\subset\mathbb{P}_{k}^{9}$
by a linear subspace of codimension $3$. It contains a unique $\sigma_{2,2}$-plane
$\Xi$ of $\mathrm{G}(2,5)$ and the projection $\pi_{\Xi}:\mathrm{G}(2,5)\dashrightarrow\mathbb{P}_{k}^{6}$
from $\Xi$ induces the following Sarkisov link \begin{equation}\label{eq:V5-Sigma-2-2-link}
 \xymatrix@C1.7em{ \mathrm{Bl}_{S_{X_3}} H_{X_3} \ar[d] \ar@{^{(}->}[rr]  & & \mathrm{Bl}_{\Xi} X_3\cong \mathrm{Bl}_{S_{X_3}}\mathbb{P}^3_k \ar[dl]_{\mathrm{p}_{X_3}} \ar[dr]^{\mathrm{p}_2} \\
\Xi \ar@{^{(}->}[r] & X_3 \ar@{-->}@<1ex>[rr]^{\pi_{\Xi}} & & \mathbb{P}^3_k \ar@{-->}@<1ex>[ll]^{\Phi} & H_{X_{3}} \ar@{_{(}->}[l] & S_{X_3} \ar@{_{(}->}[l].} 
\end{equation}   where $\mathrm{p}_{X_{3}}:\mathrm{Bl}_{\Xi}X_{3}\to X_{3}$ is the
blow-up of $\Xi$, $\mathrm{p}_{2}:\mathrm{Bl}_{\Xi}X_{3}\to\mathbb{P}_{k}^{3}$
is the blow-up of a smooth $0$-dimensional sub-scheme $S_{X_{3}}$
of length $3$ of $\mathbb{P}_{k}^{3}$ not contained in a line, $H_{X_{3}}$
is the unique hyperplane of $\mathbb{P}_{k}^{3}$ containing $S_{X_{3}}$
and $\mathrm{Bl}_{S_{X_{3}}}H_{X_{3}}$ is its proper transform. Furthermore,
the birational inverse $\Phi$ of $\pi_{\Xi}$ is given by the complete
linear system of quadrics of $\mathbb{P}_{k}^{3}$ containing $S_{X_{3}}$. 
\end{prop}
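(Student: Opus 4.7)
The plan is to adapt the strategy used in the proofs of Propositions 3.6 and 3.8 to the singular codimension-three setting, combined with a codimension-three analogue of the Sarkisov link of Proposition 2.15. I would begin by invoking the classification of terminal quintic del Pezzo threefolds with class group rank four (the rank equals one plus the number of nodes by \cite[Corollary 8.3.1]{Pr13}) to identify $X_{3,\bar{k}}$ with a linear section $V_L$ of $\mathrm{G}(2,5)_{\bar{k}} = \mathbb{G}_{\bar{k}}(V^\vee,2)$ by a linear subspace of codimension three, where $L \subset \Lambda^2 V^\vee$ is a three-dimensional subspace of skew-symmetric forms. Arguing as for the smoothness criterion preceding Lemma \ref{lem:Unique-plane-W5}, the three nodes of $X_{3,\bar{k}}$ correspond to the three points of intersection of $\mathbb{P}_{\bar{k}}(L^\vee) \subset \mathbb{P}_{\bar{k}}(\Lambda^2 V)$ with the Grassmannian $\mathbb{G}_{\bar{k}}(V,2)$ of degenerate forms, and the trinodal hypothesis translates into the statement that these three points are reduced and not collinear.

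Next, I would show that $V_L$ contains a unique $\sigma_{2,2}$-plane $\Xi_{\bar{k}}$, mimicking Lemma \ref{lem:Unique-plane-W5}. Any $\sigma_{2,2}(V_{3,L}) \subset V_L$ forces $V_{3,L}$ to contain the kernels $\ker\tilde{s}$ for all $s \in L\setminus\{0\}$, and the three non-collinear kernel lines associated to the nodes span a unique three-dimensional subspace $V_{3,L}$, giving the unique plane $\Xi_{\bar{k}} = \sigma_{2,2}(V_{3,L})$. By Galois descent, this uniqueness produces a closed subvariety $\Xi \subset X_3$ defined over $k$ whose base extension is $\Xi_{\bar{k}}$, just as in the proof of Proposition \ref{prop:Uniqueness-W5}.

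I would then project from $\Xi$ following the proof of Proposition \ref{prop:Uniqueness-W5}: the linear system $\mathbb{P}(H^0(X_{3,\bar{k}}, \mathcal{I}_{\Xi_{\bar{k}}}(1)))$ carries a continuous Galois action, so $\pi_{\Xi_{\bar{k}}}$ descends to a birational map $\pi_\Xi: X_3 \dashrightarrow P_3$ with $P_3$ a $k$-form of $\mathbb{P}^3_k$. The special hyperplane section $Z_{\Xi_{\bar{k}}} \cap X_{3,\bar{k}}$, being the unique member of the linear system that contains all $\sigma_{3,1}$-planes of $V_L$, is defined over $k$, and its image is a smooth length-three subscheme $S_{X_3}$ of $P_3$ contained in a unique hyperplane $H_{X_3}$, again defined over $k$. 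Since $H_{X_3}$ is a Cartier divisor of degree one on $P_3$, the Severi-Brauer variety $P_3$ has trivial Brauer class and hence $P_3 \cong \mathbb{P}^3_k$. Reading a codimension-three analogue of the Sarkisov link of Proposition \ref{prop:Projection-Sigma2-2} backwards, the birational inverse $\Phi$ of $\pi_\Xi$ is the map defined by the complete linear system of quadrics of $\mathbb{P}^3_k$ through $S_{X_3}$, and its image recovers $X_3$ as a codimension-three linear section of $\mathrm{G}(2,5)$.

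The main obstacle will be establishing the codimension-three Sarkisov link in the singular setting: one must verify that $\mathrm{Bl}_\Xi X_3$ is smooth and isomorphic to $\mathrm{Bl}_{S_{X_3}}\mathbb{P}^3_k$, which geometrically amounts to checking that the three nodes of $X_3$ lie on $\Xi$ and are resolved by this blow-up (equivalently, that the three contracted $\sigma_{3,1}$-planes each meet $\Xi$ transversely at a node). One must also confirm that $S_{X_3}$ is reduced and not contained in any line, both of which follow from the non-collinearity of the three points of $\mathbb{P}_{\bar{k}}(L^\vee) \cap \mathbb{G}_{\bar{k}}(V,2)$ via the identifications of the first step.
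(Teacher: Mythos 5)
Your proposal follows a genuinely different route from the paper: you work directly with the codimension-$3$ linear section $V_L$ and its net $L$ of skew-symmetric forms, whereas the paper starts from Prokhorov's explicit model of $X_{3,\bar{k}}$ as the blow-up of three non-collinear points of $\mathbb{P}^{3}_{\bar{k}}$ followed by the contraction of the three lines joining them. That model hands the paper the entire Sarkisov link (\ref{eq:V5-Sigma-2-2-link}) over $\bar{k}$ for free, and the descent of $\Xi$ is obtained not from a kernel computation but from the fact (\cite[Theorem 7.2]{Pr13}) that $X_{3,\bar{k}}$ contains exactly four planes, of which $\Xi_{\bar{k}}$ is the unique one containing the whole singular locus, hence is Galois-stable. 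However, your proposal has a genuine gap, and you name it yourself: the ``codimension-three analogue of the Sarkisov link of Proposition \ref{prop:Projection-Sigma2-2} in the singular setting'' is never established, yet it is both the substantive content of the displayed diagram and the only thing that lets you read the projection backwards over $k$ to conclude that $X_{3}$ is a linear section of $\mathrm{G}(2,5)$ \emph{over $k$} (Proposition \ref{prop:Projection-Sigma2-2} is stated and proved only for $i=4,5,6$, i.e.\ for smooth sections). The paper's way around this is the one step your outline misses: since $S_{X_3}$ is reduced of length $3$ and not contained in a line, it extends to a twisted cubic $S_{X_4}\subset\mathbb{P}^3_k\subset\mathbb{P}^4_k$; applying Proposition \ref{prop:Projection-Sigma2-2} for $i=4$ to $S_{X_4}$ produces over $k$ a smooth fourfold $X_4=W_L$ containing $X_3$ as a hyperplane section, and the trinodal link is then just the restriction of the smooth codimension-$2$ link. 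Without this (or an independent proof of the singular link), your argument does not close.

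A secondary but real error: your uniqueness argument for $\Xi$ asserts that $\sigma_{2,2}(V_{3,L})\subset V_L$ forces $V_{3,L}\supset\mathrm{Ker}\,\tilde{s}$ for \emph{all} $s\in L\setminus\{0\}$. That implication (Lemma \ref{lem:Unique-plane-W5}) uses that $\tilde s$ has rank $4$, so that a $3$-dimensional $s$-isotropic subspace must contain the $1$-dimensional kernel. In the trinodal case $L$ necessarily contains three rank-$2$ forms (the points of $\mathbb{P}_k(L^{\vee})\cap\mathbb{G}_k(V,2)$ producing the nodes), whose kernels are $3$-dimensional; for those the isotropy condition on $V_{3,L}$ is $\dim(V_{3,L}\cap\mathrm{Ker}\,\tilde s)\geq 2$, not containment, and literal containment would force $V_{3,L}$ to equal three distinct $3$-planes at once. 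So the ``kernel lines associated to the nodes'' are not lines, and this step needs to be redone (e.g.\ by restricting to the rank-$4$ elements of $L$ and checking the rank-$2$ conditions separately), or replaced by the paper's enumeration of planes via \cite[Theorem 7.2]{Pr13}. The remaining descent steps (Galois action on $\mathbb{P}(H^{0}(X_{3,\bar k},\mathcal{I}_{\Xi_{\bar k}}(1)))$, triviality of the target form via a degree-one divisor) are sound and parallel to Proposition \ref{prop:Uniqueness-W5}.
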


\begin{proof}
By \cite[Theorem 7.1]{Pr13}, $X_{3,\bar{k}}$ is isomorphic to a
threefold obtained from $\mathbb{P}_{\bar{k}}^{3}$ as the blow-up
$\sigma:Y\to\mathbb{P}_{\bar{k}}^{3}$ of three non-colinear closed
points, say $p_{1},p_{2},p_{3}$, followed by the contraction $\xi:Y\to X_{3,\bar{k}}$
of the proper transforms by $\sigma$ of the lines in $\mathbb{P}_{\bar{k}}^{3}$
passing through $p_{i}$ and $p_{j}$, $1\leq i<j\leq3$ to the nodes
of $X_{3,\bar{k}}$. The class group of $X_{3,\bar{k}}$ is freely
generated by the classes of the proper transform $\tilde{H}$ of the
unique hyperplane $H\subset\mathbb{P}_{\bar{k}}^{3}$ containing the
points $p_{i}$ and of the images $F_{i}$, $1\leq i\leq3$, of the
exceptional divisors of $\sigma$. By \cite[Theorem 7.2]{Pr13}, $P$
and the $F_{i}$ are the only planes contained in $X_{3,\bar{k}}$.
Their union is thus defined over $k$ and since $P$ is the only plane
among these which fully contains the singular locus of $X_{3,\bar{k}}$
it is defined over $k$ as well, say $P=\Xi_{\bar{k}}$ for some closed
sub-scheme $\Xi$ of $X_{3}$. This implies in turn that the union
of the $F_{i}$ is defined over $k$, say $\bigcup F_{i}=F_{\bar{k}}$
for some closed sub-scheme $F$ of $X_{3}$. Since the divisor $(\Xi+F)_{\bar{k}}=P+\sum_{i=1}^{3}F_{i}$
is linearly equivalent to the proper transform in $X_{3,\bar{k}}$
of any hyperplane $H'\subset\mathbb{P}_{\bar{k}}^{3}$ not passing
through the points blown-up, it is Cartier. The invertible sheaf $\mathcal{O}_{X_{3,\bar{k}}}(P+\sum_{i=1}^{3}F_{i})$
is then the base extension to $\bar{k}$ of the invertible sheaf $\mathcal{O}_{X_{3}}(1):=\mathcal{O}_{X_{3}}(\Xi+F)$
and, letting $\mathcal{I}\subset\mathcal{O}_{X_{3}}$ be the ideal
sheaf of the singular locus of $X_{3}$, the rational map $\sigma\circ\xi^{-1}:X_{3,\bar{k}}\dashrightarrow\mathbb{P}_{\bar{k}}^{3}$
is the base extension of the birational map $\pi:X_{3}\dashrightarrow\mathbb{P}(H^{0}(X_{3},\mathcal{I}(1))\cong\mathbb{P}_{k}^{3}$.
The latter maps $\Xi$ to a hyperplane $H_{X_{3}}\subset\mathbb{P}_{k}^{3}$
and contracts $F$ to a smooth closed sub-scheme $S_{X_{3}}\subset H_{X_{3}}$
of length $3$ whose base extension to $\bar{k}$ equals the union
of the points $p_{i}$. Since $S_{X_{3}}$ is not contained in a line,
there exists a smooth rational cubic curve $S_{X_{4}}\subset\mathbb{P}_{k}^{3}$
whose scheme-theoretic intersection with $H_{X_{3}}$ equals $S_{X_{3}}$.
Considering $\mathbb{P}_{k}^{3}$ as a hyperplane $H_{X_{4}}$ of
$\mathbb{P}_{k}^{4}$, it follows from Proposition \ref{prop:Projection-Sigma2-2}
that the image of the rational map $\mathbb{P}_{k}^{4}\dashrightarrow\mathbb{P}_{k}^{7}$
given by the complete linear system of quadrics containing $S_{X_{4}}\subset H_{X_{4}}$
is a smooth quintic del Pezzo fourfold $X_{4}$ containing $X_{3}$
as a hyperplane section and which has the proper transform $\Xi$
of $H_{X_{4}}$ as its unique $\sigma_{2,2}$-plane. By construction,
the birational map $\pi:X_{3}\dashrightarrow\mathbb{P}_{k}^{3}$ then
coincides with the restriction to $X_{3}$ of the projection $\pi_{\Xi}:X_{4}\dashrightarrow\mathbb{P}_{k}^{4}$
from $\Xi$, which completes the proof.
\end{proof}
\begin{cor}
\label{cor:Iso-class-Trinodal-forms}Isomorphism classes of trinodal
quintic del Pezzo threefolds are in one-to-one correspondence with
$\mathrm{PGL}_{2}(k)$-orbits of smooth $0$-dimensional sub-schemes
of $\mathbb{P}_{k}^{1}$ of length three. Furthermore, every such
threefold admits a unique class of vector group structure.
\end{cor}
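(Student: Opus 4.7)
The plan is to deduce both assertions from the Sarkisov link of Proposition \ref{prop:trinodal-link-sigma_2,2} combined with the structure of $\mathrm{Aut}_k(X_4)$ given by Corollary \ref{cor:Aut-W5}, where $X_4$ denotes the unique (by Theorem \ref{thm:MainTh1}) smooth quintic del Pezzo fourfold containing $X_3$ as a hyperplane section. First I would note that the construction in the proof of Proposition \ref{prop:trinodal-link-sigma_2,2} realizes $X_3$ as a hyperplane section of $X_4$ containing the unique $\sigma_{2,2}$-plane $\Xi$ of $X_4$. Under the birational projection $\pi_\Xi:X_4\dashrightarrow\mathbb{P}^4_k$, such an $X_3$ corresponds to a hyperplane $H_3\subset\mathbb{P}^4_k$ cutting the smooth rational normal cubic $C=S_{X_4}$ in the length three subscheme $S=H_3\cap C\subset C\cong\mathbb{P}^1_k$, whose points correspond to the three nodes of $X_3$. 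Hence isomorphism classes of trinodal $X_3$ correspond to $\mathrm{Aut}_k(X_4)$-orbits of such hyperplanes $H_3$.

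By Corollary \ref{cor:Aut-W5}, there is a split exact sequence
\[
0\to\mathbb{G}_{a,k}^{4}\rtimes\mathbb{G}_{m,k}\to\mathrm{Aut}_k(X_4)\to\mathrm{PGL}_k(L)\cong\mathrm{PGL}_2(k)\to 0,
\]
in which the surjection records the action on $C$, and under $\pi_\Xi$ the kernel is identified with the automorphisms of $\mathbb{P}^4_k$ preserving $(H_{X_4},C)$ and restricting to the identity on $C$, equivalently fixing the hyperplane $H_{X_4}\supset C$ pointwise. The main technical step is a direct affine calculation showing that the $\mathbb{G}_{a,k}^{4}$-subgroup acts by translation on the affine chart $\mathbb{P}^4_k\setminus H_{X_4}$ and therefore transitively on the pencil of hyperplanes $H_3\neq H_{X_4}$ through any fixed codimension $2$ linear subspace of $H_{X_4}$. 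Consequently, the assignment $H_3\mapsto H_3\cap C$ induces a bijection between $\mathrm{Aut}_k(X_4)$-orbits of trinodal hyperplanes of $\mathbb{P}^4_k$ and $\mathrm{PGL}_2(k)$-orbits of smooth length three subschemes of $\mathbb{P}^1_k$, yielding the first assertion.

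For the second assertion, I would mirror the strategy of Proposition \ref{prop:Uniqueness-W5}. Since $\Xi\subset X_3$ is unique by Proposition \ref{prop:trinodal-link-sigma_2,2}, any vector group action on $X_3$ preserves $\Xi$, lifts to the blow-up $\mathrm{Bl}_\Xi X_3\cong\mathrm{Bl}_{S_{X_3}}\mathbb{P}^3_k$, and then descends via Proposition \ref{prop:adrien} applied to the other contraction of the link to a vector group structure on $\mathbb{P}^3_k$ stabilizing the pair $(H_{X_3},S_{X_3})$. The classification of vector group structures on $\mathbb{P}^3_k$ (Hassett-Tschinkel \cite{HT99}, see also \cite[Corollary 3.6]{HM20}) singles out exactly one such equivalence class, namely the toric $\mathbb{G}_{a,k}^{3}$-structure acting trivially on $H_{X_3}$. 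Since this action fixes $H_{X_3}$ pointwise, it automatically stabilizes any subscheme of $H_{X_3}$, in particular $S_{X_3}$, and therefore lifts back through the Sarkisov link of Proposition \ref{prop:trinodal-link-sigma_2,2} to a vector group structure on $X_3$. This delivers both existence and uniqueness. The principal obstacle in the overall argument remains the transitivity calculation for the kernel of $\mathrm{Aut}_k(X_4)\to\mathrm{PGL}_2(k)$ on the pencil of hyperplanes, which is what allows one to pass from $\mathrm{Aut}_k(X_4)$-orbits of hyperplane sections to $\mathrm{PGL}_2(k)$-orbits of length three subschemes of $\mathbb{P}^1_k$.
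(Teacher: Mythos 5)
Your treatment of the second assertion (existence and uniqueness of the vector group structure) is essentially identical to the paper's: stability of the unique $\sigma_{2,2}$-plane and of the singular locus, descent through the Sarkisov link via Proposition \ref{prop:adrien} to a vector group structure on $\mathbb{P}_{k}^{3}$ stabilizing $S_{X_{3}}$, the Hassett--Tschinkel classification to single out the toric structure, and the lift back. That part is fine.

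For the first assertion your route through $\mathrm{Aut}_{k}(X_{4})$ and Corollary \ref{cor:Aut-W5} is genuinely different from the paper's, and it has a gap at the very first step: the sentence ``hence isomorphism classes of trinodal $X_{3}$ correspond to $\mathrm{Aut}_{k}(X_{4})$-orbits of such hyperplanes $H_{3}$'' is only justified in one direction. That hyperplanes in the same orbit give isomorphic threefolds is clear; but you have not shown that an \emph{abstract} isomorphism $X_{3}\to X_{3}'$ between two trinodal hyperplane sections is induced by an automorphism of $X_{4}$. This is not automatic: the embedding $X_{3}\hookrightarrow X_{4}$ produced in the proof of Proposition \ref{prop:trinodal-link-sigma_2,2} depends on a non-canonical choice of twisted cubic $S_{X_{4}}$ through $S_{X_{3}}$, and in general (e.g.\ for \emph{smooth} hyperplane sections of $X_{4}$, which are all isomorphic over $\bar{k}$) abstractly isomorphic sections need not lie in a single $\mathrm{Aut}_{k}(X_{4})$-orbit. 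As written, your argument therefore only produces a surjection from $\mathrm{PGL}_{2}(k)$-orbits onto isomorphism classes; injectivity is exactly the missing half. The repair is to first descend the isomorphism through the link to an isomorphism of triples $(\mathbb{P}_{k}^{3},H_{X_{3}},S_{X_{3}})\cong(\mathbb{P}_{k}^{3},H_{X_{3}'},S_{X_{3}'})$ and then show this can be realized inside $\mathrm{Aut}_{k}(\mathbb{P}_{k}^{4},(H_{X_{4}},S_{X_{4}}))$, which requires knowing that $\mathrm{Aut}_{k}(C)\cong\mathrm{PGL}_{2}(k)$ acts transitively on length-three subschemes of the twisted cubic $C$ of a fixed isomorphism type (a true but non-obvious fact over non-closed fields). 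The paper sidesteps this by working instead with an auxiliary smooth conic $C_{X_{3}}\subset H_{X_{3}}$ through $S_{X_{3}}$, the transitivity of $\mathrm{Aut}_{k}(\mathbb{P}_{k}^{3})$ on pairs (hyperplane, conic), and the isomorphism $\mathrm{Aut}_{k}(H,C)\cong\mathrm{Aut}_{k}(C)$. Your transitivity computation for the kernel $\mathbb{G}_{a,k}^{4}\rtimes\mathbb{G}_{m,k}$ acting on the pencil of hyperplanes through a fixed codimension-two subspace of $H_{X_{4}}$ is correct and does establish the bijection between \emph{orbit sets} of hyperplanes and of length-three subschemes of $C$; the gap lies entirely in identifying isomorphism classes with those orbits.
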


\begin{proof}
By Proposition \ref{prop:trinodal-link-sigma_2,2}, two trinodal quintic
del Pezzo threefolds $X_{3}$ and $X_{3}'$ are isomorphic if and
only if there exists an automorphism of $\mathbb{P}_{k}^{3}$ which
maps the pair $(H_{X_{3}},S_{X_{3}})$ onto the pair $(H_{X_{3}'},S_{X_{3}'})$.
Being of length $3$ and not contained in a line, the schemes $S_{X_{3}}$
and $S_{X_{3}'}$ are contained in smooth $k$-rational conics $C_{X_{3}}$
and $C_{X_{3}'}$ of $H_{X_{3}}$ and $H_{X_{3}'}$, respectively.
Since $\mathrm{Aut}_{k}(\mathbb{P}_{k}^{3})$ acts transitively on
pairs $(H,C)$ consisting of a hyperplane $H$ of $\mathbb{P}_{k}^{3}$
and a smooth $k$-rational conic $C\cong\mathbb{P}_{k}^{1}$ in it
and since for such pairs the restriction homomorphism $\mathrm{Aut}_{k}(H,C)\to\mathrm{Aut}_{k}(C)$
is an isomorphism, we conclude that $X_{3}$ and $X_{3}'$ are isomorphic
if and only if there exists an isomorphism $\varphi:C_{X_{3}}\to C_{X_{3}'}$
which maps $S_{X_{3}}$ onto $S_{X_{3}'}$. This holds if and only
if $S_{X_{3}}$ and $\varphi^{-1}(S_{X_{3}'})$ belong to the same
orbit of the action of $\mathrm{Aut}_{k}(C_{X_{3}})(k)\cong\mathrm{\mathrm{PGL}_{2}}(k)$. 

For the second assertion, since the singular locus of $X_{3}$ and
the unique $\sigma_{2,2}$-plane $\Xi$ of $X_{3}$ are stable under
any vector group action on $X_{3}$, it follows from Proposition \ref{prop:adrien}
applied to the birational morphism $\mathrm{Bl}_{\Xi}X\to\mathbb{P}_{k}^{3}$
that the Sarkisov link of Proposition \ref{prop:trinodal-link-sigma_2,2}
is equivariant for any vector group structure on $X_{3}$ and that
the corresponding vector group structure on $\mathbb{P}_{k}^{3}$
stabilizes the non-linear closed sub-scheme $S_{X_{3}}$ of length
$3$. By the classification \cite{HT99} of vector group structures
on $\mathbb{P}_{k}^{3}=\mathrm{Proj}_{k}(k[x_{0},x_{1},x_{2},x_{3}])$,
the unique class of vector group structure with this property is that
of the toric $\mathbb{G}_{a}^{3}$-structure defined by $x_{0}\mapsto x_{0}$
and $x_{i}\mapsto x_{i}+t_{i}x_{0}$, $1\leq i\leq3$. Conversely,
this structure lifts to a vector group structure on the blow-up of
$S_{X_{3}}$, which, by Proposition \ref{prop:adrien} again, descends
in turn to a vector group structure on $X_{3}$. 
\end{proof}
\begin{rem}
\label{rem:singular}By Proposition \ref{prop:trinodal-link-sigma_2,2}
and Corollary \ref{cor:Iso-class-Trinodal-forms}, every trinodal
quintic del Pezzo threefold contains the affine $3$-space $\mathbb{A}_{k}^{3}$
as a Zariski open subset. In contrast, there exist in general $k$-forms
of smooth quintic del Pezzo threefolds contains which do not contain
$\mathbb{A}_{k}^{3}$, see \cite[Theorem 12]{DK5}. 
\end{rem}

\begin{example}
With Notation \ref{nota:Plucker-Coord}, let $V_{3}=\langle e_{3},e_{4},e_{5}\rangle$
and let $\Xi=\sigma_{2,2}(V_{3})$ be the associated plane of $G=\mathrm{G}(2,5)\subset\mathbb{P}_{k}^{9}$
as in Example \ref{exa:Projection-sigma22}. For every $\beta\in k^{*}$,
the linear section 
\[
X_{3}(\beta)=G\cap\{w_{13}-w_{24}=0\}\cap\{\beta w_{14}-w_{25}=0\}\cap\{\beta w_{15}+w_{23}=0\}
\]
is a trinodal quintic del Pezzo threefold containing $\Xi$, isomorphic
to the sub-variety in $\mathbb{P}_{k}^{6}$ with coordinates $w_{ij}$,
$(i,j)\neq(2,3),(2,4),(2,5)$ defined by the equations 
\[
\left\{ \begin{array}{r}
w_{12}w_{34}-w_{13}^{2}-\beta w_{14}w_{15}=0\\
w_{12}w_{35}-\beta w_{13}w_{14}-\beta w_{15}^{2}=0\\
w_{12}w_{45}-\beta w_{14}^{2}+w_{13}w_{15}=0\\
w_{13}w_{45}-w_{14}w_{35}+w_{15}w_{34}=0\\
-\beta w_{15}w_{45}-w_{13}w_{35}+\beta w_{14}w_{34}=0
\end{array}\right.
\]
Its singular locus $\mathrm{Sing}(X_{3}(\beta))$ is the closed sub-scheme
of $\Xi\cong\mathrm{Proj}_{k}(k[w_{34},w_{35},w_{45}])$ with equations
\[
\beta w_{34}w_{45}-w_{35}^{2}=w_{34}w_{35}-\beta w_{45}^{2}=w_{35}w_{45}-w_{34}^{2}=0.
\]
Letting $\lambda,\epsilon\in\bar{k}$ be respectively a third root
of $\beta$ and a primitive third root of unity, the singular locus
of $X_{3}(\beta)_{\bar{k}}$ is the union of the three closed points
$[\lambda\epsilon^{m}:(\lambda\epsilon^{m})^{2}:1]$, $0\leq m\leq2$,
of $\Xi$. Thus, according to whether $\beta$ is cube in $k^{*}$
or not and $k^{*}$contains a primitive primitive third root of unity
or not, $\mathrm{Sing}(X_{3}(\beta))$ consists either of a single
closed point, or the union of a $k$-point and a single other closed
point, or the union of three $k$-points. The image of $\Xi$ by the
restriction $X_{3}(\beta)\dashrightarrow\mathbb{P}_{k}^{3}=\mathrm{Proj}_{k}(k[w_{12},w_{13},w_{14},w_{15}])$
of the projection from $\Xi$ is the hyperplane $H_{X_{3}(\beta)}=\{w_{12}=0\}$.
The associated smooth $0$-dimensional sub-scheme $S_{X_{3}(\beta)}$
of length $3$ is the closed sub-scheme of $H_{X_{3}(\beta)}$ defined
by the equations 
\[
\beta w_{14}w_{15}+w_{13}^{2}=w_{13}w_{14}+w_{15}^{2}=w_{13}w_{15}-\beta w_{14}^{2}=0.
\]
The restriction to $X_{3}(\beta)$ of the action of the sub-group
$\mathbb{U}\cong\mathbb{G}_{a,k}^{3}$ of the vector group $\mathbb{V}_{k}(F_{6}^{\vee})$
of (\ref{eq:Ga6-Rep-Grass}) in Example \ref{exa:Explicit-Toric-Structures}
defined by $t_{24}=t_{13}$, $t_{25}=\beta t_{14}$ and $t_{23}=-\beta t_{15}$
induces a vector group structure 

\[
\left\{ \begin{array}{l}
w_{12}\longmapsto w_{12}\\
w_{13}\longmapsto w_{13}+t_{13}w_{12}\\
w_{14}\longmapsto w_{14}+t_{14}w_{12}\\
w_{15}\longmapsto w_{15}+t_{15}w_{12}\\
w_{34}\longmapsto w_{34}+2t_{13}w_{13}+\beta(t_{15}w_{14}+t_{14}w_{15})+(t_{13}^{2}+\beta t_{14}t_{15})w_{12}\\
w_{35}\longmapsto w_{35}+\beta(t_{14}w_{13}+t_{13}w_{14}+2t_{15}w_{15})+\beta(t_{13}t_{14}+t_{15}^{2})w_{12}\\
w_{45}\longmapsto w_{45}+(2\beta t_{14}w_{14}-t_{15}w_{13}-t_{13}w_{15})+(\beta t_{14}^{2}-t_{15}t_{13})w_{12}
\end{array}\right.
\]
on $X_{3}(\beta)$ with open orbit $X_{3}(\beta)\setminus\{w_{12}=0\}$. 
\end{example}

\subsection{Vector group structures on canonical quintic del Pezzo surfaces }

Del Pezzo surfaces with canonical singularities admitting a vector
group structure are classified in \cite{DL10}, see also \cite{MaSt20}\textcolor{purple}{.
}For the sake of completeness, we record the following consequence
of the classification in the quintic case: 
\begin{prop}
\label{prop:delPezzoSurfs}Up to isomorphism, there exist two quintic
del Pezzo surfaces with canonical singularities which admit a vector
group structure:

a) A surface $S$ with an $A_{3}$-singularity, whose neutral component
$\mathrm{Aut}^{0}(S)$ of the automorphism group is isomorphic to
$\mathbb{G}_{a,k}^{2}\rtimes\mathbb{G}_{m,k}$ and which admits a
unique class of vector group structure. 

b) A surface $S'$ with an $A_{4}$-singularity, whose neutral component
$\mathrm{Aut}^{0}(S')$ of the automorphism group is isomorphic to
$\mathbb{U}_{3}\rtimes\mathbb{G}_{m,k}$, where $\mathbb{U}_{3}$
is a maximal unipotent subgroup of $\mathrm{PGL}_{3}(k)$, and which
admits exactly two classes of vector group structures. 
\end{prop}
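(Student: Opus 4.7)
The plan is to deduce this as a specialization of the classification of Derenthal and Loughran \cite{DL10}. Recall that over $\bar k$, a canonical del Pezzo surface of degree five is determined up to isomorphism by its singularity type, which corresponds to a proper sub-root system of the $A_4$ root lattice: the possibilities are $\emptyset$, $A_1$, $2A_1$, $A_2$, $A_1 + A_2$, $A_3$, and $A_4$. Each type is realized by a unique variety, obtained as the anticanonical model of the blow-up of $\mathbb{P}^2_{\bar k}$ at a four-point configuration (possibly involving infinitely near points) producing the corresponding chain of $(-2)$-curves. The first step is to extract from \cite{DL10} those quintic cases admitting a vector group structure: this leaves precisely the surfaces $S$ and $S'$ with singularity types $A_3$ and $A_4$, respectively, each unique up to isomorphism over $\bar k$.

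Next, I would compute the connected component of the identity $\mathrm{Aut}^0$ in each case by lifting to the minimal resolution $\tilde S \to S$ (resp. $\tilde S' \to S'$) and identifying the resulting group with the subgroup of $\mathrm{PGL}_{3,k}$ stabilizing the prescribed configuration of four infinitely near points in $\mathbb{P}^2_k$. A direct computation yields $\mathbb{G}_{a,k}^2 \rtimes \mathbb{G}_{m,k}$ in the $A_3$-case, while the $A_4$-case produces the full stabilizer of a length-four chain of infinitely near points in a given direction, isomorphic to $\mathbb{U}_3 \rtimes \mathbb{G}_{m,k}$ for a maximal unipotent (Heisenberg) subgroup $\mathbb{U}_3$ of $\mathrm{PGL}_{3,k}$. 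Since both groups are connected split solvable, $H^1(\Gamma, \mathrm{Aut}^0)$ vanishes, and since the dual graph of the exceptional configuration on the minimal resolution is $\Gamma$-invariant, Galois descent implies that every $k$-form of $S$ and $S'$ is trivial; hence uniqueness up to $k$-isomorphism descends from $\bar k$ to $k$.

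Finally, equivalence classes of vector group structures on a variety $X$ correspond bijectively to conjugacy classes in $\mathrm{Aut}(X)$ of two-dimensional connected abelian unipotent subgroups of $\mathrm{Aut}^0(X)$ acting with an open orbit. In $\mathbb{G}_{a,k}^2 \rtimes \mathbb{G}_{m,k}$, the unique such subgroup is the unipotent radical $\mathbb{G}_{a,k}^2$ itself, giving the unique equivalence class in case (a). In $\mathbb{U}_3 \rtimes \mathbb{G}_{m,k}$, the Heisenberg group $\mathbb{U}_3$ contains exactly two maximal abelian subgroups, each isomorphic to $\mathbb{G}_{a,k}^2$ and consisting of the center of $\mathbb{U}_3$ together with one of the two $\mathbb{G}_{m,k}$-eigen-$\mathbb{G}_{a,k}$'s; no element of $\mathrm{Aut}(S')$ swaps them, since the Dynkin diagram involution of $A_4$ does not lift to the minimal resolution (the chain of infinitely near points has a distinguished initial point lying on $\mathbb{P}^2$). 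One checks by direct inspection that both act with an open orbit on $S'$, giving the two equivalence classes in case (b). The main obstacle will be the explicit determination of $\mathrm{Aut}^0$ in each case and the correct identification of the component group of $\mathrm{Aut}$ in the $A_4$-case, which is crucial for ensuring that the two candidate vector group structures are genuinely inequivalent.
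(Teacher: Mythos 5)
Your reduction of the problem to conjugacy classes of two-dimensional connected abelian unipotent subgroups of $\mathrm{Aut}(X)$ acting with a dense orbit is legitimate, and your argument for case (a) is fine (indeed slightly slicker than the paper's, which instead shows that only the non-toric $\mathbb{G}_{a}^{2}$-structure on $\mathbb{P}_{k}^{2}$ lifts through the resolution). But case (b) contains a genuine error: the claim that the Heisenberg group $\mathbb{U}_{3}$ ``contains exactly two maximal abelian subgroups'' is false. Since $[\mathbb{U}_{3},\mathbb{U}_{3}]=Z(\mathbb{U}_{3})$ is the centre, the preimage in $\mathbb{U}_{3}$ of \emph{any} line of the two-dimensional quotient $\mathbb{U}_{3}/Z(\mathbb{U}_{3})$ is a two-dimensional abelian subgroup, so there is a whole $\mathbb{P}^{1}$-family of them, and inner conjugation acts trivially on this family. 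Your enumeration therefore misses the generic members of the family, and in fact your identification of the two classes is also wrong: realizing $\mathbb{U}_{3}$ as the upper-triangular unipotent subgroup of $\mathrm{PGL}_{3}$ acting on $\mathbb{P}_{k}^{2}$ through the contraction $\tilde{S}'\to\mathbb{P}_{k}^{2}$, one of the two $\mathbb{G}_{m}$-eigen-subgroups (the one covering the root space $\langle e_{12}\rangle$) moves only one homogeneous coordinate and hence has no dense orbit at all, while the generic subgroups (covering $\langle ae_{12}+be_{23}\rangle$, $ab\neq0$) do act with a dense orbit and form a single conjugacy class under the surviving $\mathbb{G}_{m}$, giving the ``non-toric'' structure. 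So the two classes are the toric eigen-subgroup and the generic family, not the two eigen-subgroups; the count of two survives only after this corrected orbit analysis, which your argument does not supply. The paper sidesteps the whole issue by descending any vector group structure on $\tilde{S}'$ to one on $\mathbb{P}_{k}^{2}$ fixing the relevant point and line, invoking the Hassett--Tschinkel classification (exactly two classes, toric and non-toric), checking that both lift back, and distinguishing them by their fixed-point schemes.

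Two smaller points. First, $k$-forms are governed by $H^{1}(\Gamma,\mathrm{Aut}(S_{\bar{k}}))$, not by $H^{1}(\Gamma,\mathrm{Aut}^{0})$; the paper instead observes that the dual graphs of the negative curves on the minimal resolutions have no symmetries, so the whole configuration and the contraction to a form of $\mathbb{P}_{k}^{2}$ are defined over $k$. Second, your criterion for inequivalence of the two structures in case (b) via the component group is the right kind of argument but is left unverified; the paper's comparison of fixed-point schemes is a cleaner discriminant.
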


\begin{proof}
All the properties but those concerning the actual number of equivalence
classes of vector group structures are established in \cite{DL10}
and for the description of the automorphism groups in \cite[Table 1]{MaSt20},
cases $5E$ and $5F$ for $S$ and $S'$ respectively. We briefly
recall the principle of the argument in \emph{loc. cit.} and explain
how derive from it the equivalence classes of vector group structures.
Equivalence classes of vector group structures on a del Pezzo surface
$S$ with canonical singularities are in one-to-one correspondence
with those on its minimal desingularization $\tilde{S}\to S$, which
is obtained from $S$ by performing a finite sequence of successive
blow-ups of singular loci of intermediate surfaces and normalizations.
Indeed, a vector group structure stabilizes singular loci, hence canonically
lifts to their blow-ups and, by universal property, canonically lifts
as well to normalizations. Conversely, Proposition \ref{prop:adrien}
ensures that every vector group structure on $\tilde{S}$ descend
to $S$. Here, the surfaces $\tilde{S}$ are weak del Pezzo surfaces
of degree $5$ whose base extensions to $\bar{k}$ are obtained from
$\mathbb{P}_{\bar{k}}^{2}$ by performing certain finite sequences
of blow-ups of closed points. In the two cases under consideration,
the respective dual graphs of the unions of the $(-1)$-curves and
$(-2)$-curves in $\tilde{S}_{\bar{k}}$ have the following structure
$$ \xymatrix@R=0.6em{ \stackrel{e_1}{\circ}\ar@{-}[r]& \stackrel{e_2}{\circ} \ar@{-}[r]\ar@{-}[d]& \stackrel{e_3}{\circ} \ar@{-}[r]& \stackrel{\tilde{\ell_2}}{\bullet} \\ & \stackrel{\bullet}{\scriptstyle{\tilde{\ell_1}}}  } \qquad  \qquad \xymatrix@R=0.8em{ \stackrel{e_1}{\circ}\ar@{-}[r]& \stackrel{e_4}{\circ} \ar@{-}[r]\ar@{-}[d]&\stackrel{e_3}{\circ}\ar@{-}[r]& \stackrel{e_2}{\circ} \\ & \stackrel{\bullet}{\scriptstyle{\tilde{\ell}}} } $$
in which the vertices $\bullet$ and $\circ$ correspond respectively
to $(-1)$-curves which are the proper transforms of the lines in
$S_{\bar{k}}$ and to $(-2)$-curves which are the exceptional divisors
of the desingularization $\tilde{S}_{\bar{k}}\to S_{\bar{k}}$. Since
these diagrams have no symmetries, all the curves displayed are defined
over $k$, corresponding to irreducible smooth $k$-rational curves
in $\tilde{S}$ with the same self-intersection numbers. 

In the case of an $A_{3}$-singularity, the successive contractions
of $\tilde{\ell}_{2}$, and then of the exceptional divisors $e_{3}$,
$e_{2}$ and $e_{1}$ yield a birational morphism $\sigma:\tilde{S}\to\mathbb{P}_{k}^{2}=\mathrm{Proj}_{k}(k[u_{0},u_{1},u_{2}])$
which maps $\tilde{\ell}_{1}$ onto a line $\ell\subset\mathbb{P}_{k}^{2}$
and contracts $\tilde{\ell}_{2}\cup e_{1}\cup e_{2}\cup e_{3}$ onto
a $k$-point $p\in\ell$ , say, up to composition by a suitable automorphism
of $\mathbb{P}_{k}^{2}$, $\ell=\{u_{2}=0\}$ and $p=[1:0:0]$. A
vector group structure on $S$ and its canonical lift to $\tilde{S}$
being given, Proposition \ref{prop:adrien} implies the existence
of a unique vector group structure on $\mathbb{P}_{k}^{2}$ for which
$\sigma:\tilde{S}\to\mathbb{P}_{k}^{2}$ is equivariant. The latter
stabilizes $\ell$ as well as the proper and infinitely near base
points of $\sigma^{-1}$. By \cite[Proposition 3.2]{HT99} there are
two classes of vector group structures on $\mathbb{P}_{k}^{2}$ fixing
$\ell$ and $p$: the ``toric'' structure given by the $\mathbb{G}_{a,k}^{2}$-action
$[u_{0}:u_{1}:u_{2}]\mapsto[u_{0}+t_{0}u_{2}:u_{1}+t_{1}u_{2}:u_{2}]$
and the ``non-toric'' one given by the $\mathbb{G}_{a,k}^{2}$-action
$[u_{0}:u_{1}:u_{2}]\mapsto[u_{0}+t_{1}u_{1}+(\frac{1}{2}t_{1}^{2}+t_{0})u_{2}:u_{1}+t_{1}u_{2}:u_{2}]$.
A direct verification shows that the lift of the toric structure to
the surface $\tilde{S}_{1}$ obtained from $\tilde{S}$ by contracting
$\tilde{\ell}_{2}$ acts transitively on $e_{3}\setminus e_{2}$,
hence that this structure cannot be induced by a vector group structure
on $\tilde{S}$. On the other hand, the lift to $\tilde{S}_{1}$ of
the other structure fixes $e_{3}$ point wise, hence is descended
via the contraction of $\tilde{\ell}_{2}$ from a vector group structure
on $\tilde{S}$. Thus, $\tilde{S}$, whence $S$, has a unique class
of vector group structure. 

In the case of an $A_{4}$-singularity, the successive contractions
of $\tilde{\ell}$, and then of the exceptional divisors $e_{4}$,
$e_{3}$ and $e_{2}$ yield birational morphism $\sigma:\tilde{S}\to\mathbb{P}_{k}^{2}$
which maps $e_{1}$ onto a line $\ell\subset\mathbb{P}_{k}^{2}$ and
$\tilde{\ell}\cup e_{4}\cup e_{3}\cup e_{2}$ onto a $k$-point $p\in\ell$.
Up to composing by a suitable automorphism of $\mathbb{P}_{k}^{2}$
as above, we again infer that a vector group structure on $\tilde{S}$
is equivalent to the lift via $\sigma$ of one of the two equivalence
classes of such structures on $\mathbb{P}_{k}^{2}$ described above.
Noting that for both structures the first three points blown-up by
$\sigma$ are fixed and that the lifts of these two structures to
the resulting surface both fix $e_{4}$ point wise, we conclude that
both structures lift to $\tilde{S}$. These two structures descend
in turn on $S$, showing that $S$ has at most two equivalence classes
of vector group structures. The conclusion follows from the observation
that two so-constructed induced structures have non-isomorphic fixed
point schemes, hence are not equivalent. 
\end{proof}
\begin{rem}
With the notation of the proof of Proposition \ref{prop:delPezzoSurfs},
in the case of the del Pezzo surface $S$ with an $A_{3}$-singularity,
the contractions of $\tilde{\ell}_{1}$, $e_{1}$, $e_{2}$ and $\tilde{\ell}_{2}$
yield another birational morphism $\sigma':\tilde{S}\to\mathbb{P}_{k}^{2}$
which maps $e_{3}$ onto a line $\ell'$ and contracts $\tilde{\ell}_{1}\cup e_{2}\cup e_{1}$
and $\tilde{\ell}_{2}$ onto a pair of disctinct $k$-points of $\ell$.
In contrast with the morphism $\sigma:\tilde{S}\to\mathbb{P}_{k}^{2}$
constructed in the proof of Proposition \ref{prop:delPezzoSurfs}
which is equivariant with respect to the non-toric $\mathbb{G}_{a,k}^{2}$-structure
on $\mathbb{P}_{k}^{2}$, the birational morphism $\sigma'$ is equivariant
with respect to the toric $\mathbb{G}_{a,k}^{2}$-structure on $\mathbb{P}_{k}^{2}$.
The toric and non-toric structure on $\mathbb{P}_{k}^{2}$ thus become
equivalent on $S$ and hence, are birationally conjugated on $\mathbb{P}_{k}^{2}$
by the birational automorphism $\sigma'\circ\sigma^{-1}$. 
\end{rem}

\bibliographystyle{amsplain} 

\end{document}